\documentclass[a4paper,11pt]{amsart}
\usepackage{amsmath}
\usepackage{amsthm}
\usepackage{amsfonts}
\usepackage{amssymb}
\usepackage{latexsym}
\usepackage{mathrsfs}

\usepackage{enumitem}

\usepackage[section]{placeins}
\usepackage{xcolor}

\usepackage[abs]{overpic}
\usepackage[titletoc]{appendix}
\usepackage{graphicx}
\usepackage{latexsym}
\usepackage[utf8]{inputenc}
\usepackage{epsfig}
\usepackage{psfrag}

\usepackage{tikz}
\usepackage[normalem]{ulem}

\usepackage{diagbox}



\numberwithin{equation}{section}

\newtheorem{theorem}{Theorem}[section]{\bf}{\it}
\newtheorem{lemma}[theorem]{Lemma}{\bf}{\it}
\newtheorem{proposition}[theorem]{Proposition}{\bf}{\it}
\newtheorem{corollary}[theorem]{Corollary}{\bf}{\it}
{\bf}{\it} 
{\bf}{\it}
\newtheorem*{theorem*}{Theorem}

\newtheorem{remark}[theorem]{Remark}
{\bf}{\it}
{\bf}{\it}

\newtheorem*{namedtheorem}{\theoremname}
\newcommand{\theoremname}{testing}
\newenvironment{named}[1]{\renewcommand{\theoremname}{#1}\begin{namedtheorem}}{\end{namedtheorem}}

\newtheorem*{definition*}{Definition}

{\bf}{\it}
{\bf}{\it}
{\bf}{\it}
\newtheorem{example}[theorem]{Example}
\newtheorem*{example*}{Example}

\theoremstyle{remark}

\theoremstyle{definition}

\theoremstyle{remark}


\newcommand{\R}{\mathbb R}

\newcommand{\C}{\mathbb C}
\newcommand{\N}{\mathbb N}

\newcommand{\loc}{{\operatorname{loc}}}

\newcommand{\id}{{\operatorname{id}}}



\newdimen\vintkern\vintkern11pt
\def\vint{-\kern-\vintkern\int}


\newcommand{\norm}[1]{\lVert #1 \rVert}

\newcommand{\bS}{\mathbb{S}}

\newcommand{\cF}{\mathcal{F}}

\newcommand{\vol}{\mathrm{vol}}

\newcommand{\dR}{\mathrm{dR}}

\newcommand{\cW}{\mathscr W}

\newcommand{\bSpan}{\mathrm{span}}

\newcommand{\im}{\mathrm{im} \,}

\newcommand{\sym}{\mathrm{sym}}

\newcommand{\core}{\mathrm{core} \,}

\newcommand{\abs}[1]{\left\lvert #1 \right\rvert}


\newcommand{\K}{K^n(N)}
\newcommand{\F}{\cF_{K,D}(N,\omega)}
\newcommand{\weakto}{\rightharpoonup}


\renewcommand{\le}{\leqslant}
\renewcommand{\ge}{\geqslant}


\title{Quasiregular curves and cohomology}

\author{Susanna Heikkilä}
\address{Department of Mathematics and Statistics, P.O. Box 68 (Pietari Kalmin katu 5), FI-00014 University of Helsinki, Finland}
\email{susanna.a.heikkila@helsinki.fi}

\thanks{This work was supported in part by the Academy of Finland project \#332671.}
\subjclass[2010]{Primary 30C65; Secondary 30L10, 32A30, 53C15, 57M12}

\begin{document}

\begin{abstract}
Let $N$ be a closed, connected, and oriented Riemannian manifold, which admits a quasiregular $\omega$-curve $\R^n \to N$ with infinite energy. We prove that, if the de Rham class of $\omega$ is non-zero and belongs to a so-called Künneth ideal, then there exists a non-trivial graded algebra homomorphism $H_\dR^*(N) \to \bigwedge^* \R^n$ from the de Rham algebra $H_\dR^*(N)$ of $N$ to the exterior algebra $\bigwedge^* \R^n$. As an application, we give examples of pairs $(N,\omega)$, where $N$ is a closed manifold and $\omega$ is a closed $n$-form for $n<\dim N$, for which every quasiregular $\omega$-curve $\R^n \to N$ is constant.
\end{abstract}

\maketitle

\section{Introduction}

A continuous map $f\colon M\to N$ between oriented Riemannian $n$-manifolds, $n\ge 2$, is \emph{$K$-quasiregular for $K\ge 1$} if $f\in W_\loc^{1,n}(M,N)$ and
\[
\norm{Df}^n \le K(\star f^* \vol_N) \text{ a.e.~in } M;
\]
here $\norm{Df}$ is the operator norm of the weak differential $Df$ and $\star f^* \vol_N$ is the Hodge star dual of the $n$-form $f^* \vol_N$. An oriented and connected $n$-manifold $N$ is \emph{quasiregularly elliptic}, if there exists a non-constant quasiregular map $\R^n \to N$.

In 1981, Gromov \cite[p. 200]{GromovM:Hypmga} posed a question, whether all closed simply connected manifolds are quasiregularly elliptic. In 2019, Prywes \cite[Corollary 1.2]{PR} answered Gromov's question by giving examples of closed simply connected $4$-manifolds which are not quasiregularly elliptic. The examples were obtained as an application of Prywes' cohomology dimension bound for closed quasiregularly elliptic manifolds \cite[Theorem 1.1]{PR}: \emph{If $N$ is a closed quasiregularly elliptic manifold, then $\dim H_\dR^k(N) \le \dim \bigwedge^k \R^n = \binom{n}{k}$ for each $k$.} In \cite{Heikkila-Pankka}, Pankka and the author showed that, in fact, $H_\dR^*(N)$ is Euclidean in the following sense: \emph{If $N$ is a closed quasiregularly elliptic manifold, then there exists an embedding of graded algebras $H_\dR^*(N)\to \bigwedge^* \R^n$.} This embedding yields a topological classification of closed simply connected quasiregularly elliptic $4$-manifolds \cite[Corollary 1.2]{Heikkila-Pankka}.

In this paper, we prove a generalization of this cohomology embedding theorem for closed manifolds admitting quasiregular curves.

For the definition of a quasiregular curve, let $M$ be an oriented Riemannian $n$-manifold with $n\ge 2$. Let $N$ be an oriented Riemannian manifold and let $\omega \in \Omega^n(N)$ be a closed non-vanishing form, i.e., $d\omega=0$ and $\omega(x) \ne 0$ for every $x\in N$. A continuous map $F\colon M\to N$ is a \emph{$K$-quasiregular $\omega$-curve for $K\ge 1$} if $F\in W_\loc^{1,n}(M,N)$ and
\[
(\norm{\omega} \circ F) \norm{DF}^n \le K(\star F^* \omega) \text{ a.e.~in } M;
\]
here the \emph{pointwise comass norm of $\omega$} is given by
\[
\norm{\omega(x)} = \max \{\omega_x(v_1,\ldots,v_n) \colon v_1,\ldots,v_n \text{ unit vectors in } T_xN\},
\]
$\norm{DF}$ is the operator norm of the weak differential $DF$, and $\star F^* \omega$ is the Hodge star dual of the $n$-form $F^* \omega$.

Quasiregular curves are introduced in \cite{Pankka-QRC} and include, for example, quasi\-regular maps, holomorphic curves, and pseudoholomorphic curves.

Quasiregular curves lead to the following two notions of quasiregular ellipticity: Let $N$ be an oriented and connected Riemannian manifold and let $\omega \in \Omega^n(N)$ be a closed non-vanishing form with $n\ge 2$. We say that the pair $(N,\omega)$ is \emph{quasiregularly elliptic} if there exists a non-constant quasiregular $\omega$-curve $F\colon \R^n \to N$. We also say that the pair $(N,\omega)$ is \emph{infinite energy quasiregularly elliptic} if there exists a quasiregular $\omega$-curve $F\colon \R^n \to N$ satisfying $\int_{\R^n} F^* \omega = \infty$.

\begin{remark}
We note that a pair $(N,\vol_N)$, where $N$ is a closed $n$-manifold satisfying $H_\dR^*(N)\ne H_\dR^*(\bS^n)$, is infinite energy quasiregularly elliptic if and only if $N$ is quasiregularly elliptic. The equivalence follows from the Bonk--Heinonen growth result for quasiregular mappings \cite[Theorem~1.11]{BonkM:Quamc}: \emph{Let $N$ be a closed, connected, and oriented Riemannian $n$-manifold with $n\ge 2$ and $H_\dR^*(N)\ne H_\dR^*(\bS^n)$. Then every non-constant $K$-quasiregular map $f\colon \R^n \to N$ satisfies
\[
\liminf_{r\to \infty} \frac{\int_{B^n(0,r)} f^* \vol_N}{r^\varepsilon} >0,
\]
where $\varepsilon=\varepsilon(n,K)>0$.} To our knowledge, a similar growth result is not known for quasiregular curves in full generality; for so-called \emph{signed quasiregular curves}, see \cite{Heikkila}. In particular, the growth theorem in \cite{Heikkila} guarantees infinite energy in all the examples presented in this paper; for details see Theorem~\ref{thm:signed}.
\end{remark}

To state our main result, we introduce some terminology stemming from the Künneth theorem: \emph{Let $N=M\times P$ be a product of closed smooth manifolds. Then the map
\[
\bigoplus_{\ell=0}^k H_\dR^\ell(M) \otimes H_\dR^{k-\ell}(P) \to H_\dR^k(N), \; c \otimes c' \mapsto \pi_M^* c \wedge \pi_P^* c',
\]
where $\pi_M \colon N\to M$ and $\pi_P \colon N\to P$ are the projections, is an isomorphism for each $k$.}

For a smooth manifold $N$, we say that the \emph{Künneth ideal of $N$} is the ideal $K^*(N) = \oplus_{k=2}^{\dim N} K^k(N)$ of $H_\dR^*(N)$, where the $k$th layer $K^k(N)$ is the vector space generated by products $c\wedge c'$ of de Rham classes $c\in H_\dR^\ell(N)$ and $c'\in H_{\dR}^{k-\ell}(N)$ with $1\le \ell \le k-1$, i.e.,
\[
K^k(N) = \bSpan_\R \left( \bigcup_{\ell=1}^{k-1} H_\dR^\ell(N) \wedge H_\dR^{k-\ell}(N) \right).
\]

We are now ready to present our main result.

\begin{theorem} \label{thm:weaker-main}
Let $(N,\omega)$ be infinite energy quasiregularly elliptic, where $N$ is closed and $0\ne [\omega]_\dR \in \K$. Then there exists a graded algebra homomorphism $\Phi \colon H_{\dR}^*(N) \to \bigwedge^* \R^n$ for which $\Phi [\omega]_\dR \ne 0$.
\end{theorem}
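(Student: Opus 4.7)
The plan is to adapt the construction of the graded algebra embedding from \cite{Heikkila-Pankka} (which handles the volume-form case $\omega = \vol_N$) to quasiregular $\omega$-curves with infinite energy. For each $r > 0$, set $F_r \colon B^n(0,1) \to N$ by $F_r(x) = F(rx)$ and let $E(r) := \int_{B^n(0,r)} F^* \omega$; the infinite-energy hypothesis gives $E(r) \to \infty$. For any smooth closed $k$-form $\alpha \in \Omega^k(N)$, the pointwise bound $|F^*\alpha| \leq \|\alpha\|_\infty \|DF\|^k$ combined with the quasiregular $\omega$-curve inequality yields
\[
\int_{B^n(0,1)} |F_r^* \alpha|^{n/k} \, dx \leq C(\alpha,\omega,K)\, E(r),
\]
so the normalized pullback $\widetilde\alpha_r := F_r^* \alpha / E(r)^{k/n}$ is uniformly bounded in $L^{n/k}(B^n(0,1), \bigwedge^k \R^n)$, a reflexive space whenever $1 \leq k \leq n - 1$.

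I would then choose smooth closed representatives for bases of $H_\dR^k(N)$ for each $1 \leq k \leq n - 1$ and, using reflexivity with a diagonal argument, extract a sequence $r_j \to \infty$ along which all chosen $\widetilde\alpha_{r_j}$ converge weakly. The principal step is to show via Iwaniec's $L^p$ Hodge--Sobolev decomposition that each weak limit is a constant-coefficient form in $\bigwedge^k \R^n$: closedness combined with the quasiregularity bound suppresses the exact part of the decomposition in the limit. This produces linear maps $\Phi_k \colon H_\dR^k(N) \to \bigwedge^k \R^n$ for $1 \leq k \leq n - 1$, independent of the chosen representatives, since exact representatives contribute pullbacks whose Hodge constant parts vanish.

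The multiplicativity $\Phi_{k+\ell}([\alpha] \wedge [\beta]) = \Phi_k[\alpha] \wedge \Phi_\ell[\beta]$ then follows from the tautological identity $\widetilde\alpha_r \wedge \widetilde\beta_r = \widetilde{\alpha \wedge \beta}_r$, once one invokes an Iwaniec-style div--curl lemma to pass wedge products through weak limits of closed forms in dual Lebesgue exponents; this produces a graded algebra homomorphism $\Phi \colon H_\dR^*(N) \to \bigwedge^* \R^n$ after extending multiplicatively (and by zero in degrees $> n$). For non-vanishing, the Künneth hypothesis is essential: writing $[\omega]_\dR = \sum_i [\alpha_i] \wedge [\beta_i]$ with $\deg \alpha_i, \deg \beta_i \in \{1, \ldots, n-1\}$, multiplicativity gives $\Phi[\omega]_\dR = \sum_i \Phi[\alpha_i] \wedge \Phi[\beta_i]$, while a direct change of variables yields $\int_{B^n(0,1)} \widetilde\omega_r = 1$ for every $r$, and the div--curl convergence gives $\int_{B^n(0,1)} \Phi[\omega]_\dR = \lim_j \int_{B^n(0,1)} \widetilde\omega_{r_j} = 1$, so $\Phi[\omega]_\dR \ne 0$. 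The role of Künneth here is precisely to transfer the non-vanishing problem from the non-reflexive space $L^1$ (where $\widetilde\omega_r$ naturally sits, and where no useful weak compactness is available) to the reflexive range $1 < n/k < \infty$ where the rest of the argument is valid.

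I expect the main obstacle to be the extension of Iwaniec's $L^p$ Hodge--Sobolev theory for closed forms from the quasiregular map setting to the quasiregular $\omega$-curve setting: the weight $\|\omega\|\circ F$ in the curve inequality and the possibility of $\dim N > n$ both demand new bookkeeping, and the quantitative decay of the Hodge remainders along the extracted subsequence --- uniformly over the finitely many chosen representatives --- is the technical crux of both the constancy of the weak limits and the passage of wedge products through those limits.
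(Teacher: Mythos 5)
Your outline captures some genuine features of the argument (normalizing by the $\omega$-energy, using reflexivity of $L^{n/k}$ for $1\le k\le n-1$, a compensated-compactness/Poincaré-operator device to pass wedge products through weak limits, and using the Künneth hypothesis to avoid $L^1$), but there are two substantive gaps that make the argument as written incorrect.

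First, the claim that each weak limit $\lim_j \widetilde\alpha_{r_j}$ is a constant-coefficient element of $\bigwedge^k\R^n$ is false, and is not what the paper proves. There is no mechanism by which closedness plus the quasiregular $\omega$-curve inequality forces the weak limit of normalized pullbacks to be constant: for $K>1$ (and a fortiori for $\omega$-curves) the pullback of a harmonic form is not harmonic, and the weak limit is simply some $L^{n/k}$ form. What the paper actually establishes (Theorem~\ref{thm:limit-extension}, Proposition~\ref{prop:extension-homom}) is that the limit operator $L\colon H_\dR^*(N)\to\cW^*(B_2^n)$ is a graded \emph{algebra homomorphism into the Sobolev-form algebra} $\cW^*(B_2^n)$, using the Iwaniec--Lutoborski Poincaré operator $T$ (with $dT+Td=\id$, $T$ compact). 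The target $\bigwedge^*\R^n$ is then reached by \emph{pointwise evaluation}: since there are only finitely many basis classes, one selects a Lebesgue point $x_0\in B^n$ at which all the finitely many wedge identities $(Lc_i)(x_0)\wedge(Lc_\ell)(x_0)=(L(c_i\wedge c_\ell))(x_0)$ hold and $(L[\omega]_\dR)(x_0)\ne 0$, and sets $\Phi c_i=(Lc_i)(x_0)$. Without this evaluation step your construction never reaches $\bigwedge^*\R^n$.

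Second, the non-vanishing argument has a concealed escape-of-mass problem. The identity $\int_{B^n(0,1)}\widetilde\omega_r=1$ is indeed exact, but the compensated-compactness convergence $\widetilde\alpha_r\wedge\widetilde\beta_r\weakto\Phi[\alpha]\wedge\Phi[\beta]$ is only tested against functions in $C_0^\infty(B^n(0,1))$; the constant test function $1$ is not admissible, and the limit could lose all of its mass to $\partial B^n(0,1)$, giving $\int_{B^n(0,1)}\Phi[\omega]_\dR=0$. The paper handles this by (i) invoking Rickman's Hunting Lemma (Lemma~\ref{lem:hunting}) to produce balls $B^n(a_j,r_j)$ with a \emph{doubling} property $\int_{B^n(a_j,2r_j)}F^*\omega\le D\int_{B^n(a_j,r_j)}F^*\omega$---note that your fixed-center rescaling $F_r(x)=F(rx)$ does not in general yield such a sequence---and (ii) working with the concentric pair $B^n\subset B_2^n$: the doubling bound gives uniform $L^{n/k}(B_2^n)$ control on the \emph{larger} ball, so that the vague limit of $F_j^!\omega$ lives in $L^1(B_2^n)$ and places no mass on $\partial B^n$; cutoffs supported in $B_2^n$ that dominate $\chi_{\bar B^n}$ then yield $\mu(B^n)=1$ (Corollary~\ref{cor:prob-measure}). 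This two-ball structure, fed by the Hunting Lemma, is exactly what you would need to make your final $\lim_j\int\widetilde\omega_{r_j}=1$ step legitimate, and it is absent from your proposal.

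A smaller remark: your assertion that representative-independence follows because ``exact representatives contribute pullbacks whose Hodge constant parts vanish'' presupposes the false constancy claim above; the correct statement is that normalized pullbacks of exact forms tend to zero weakly when $A_\omega(F_j)\to\infty$ (Lemma~\ref{lem:limit-of-exact}), which is proved by integration by parts against the Poincaré primitive, not by Hodge-theoretic constancy.
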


By the Poincaré duality and the Bonk--Heinonen growth theorem, Theorem~\ref{thm:weaker-main} yields as a corollary the aforementioned embedding theorem in \cite{Heikkila-Pankka}.

Before discussing further consequences of Theorem~\ref{thm:weaker-main}, we briefly mention examples of non-trivial Künneth ideals, which are used later in this paper. The first example is given by product manifolds, where the factors are equidimensional. 

\begin{example} \label{ex:product-kunneth}
Let $n\ge 2$. Let $N=N_1 \times \cdots \times N_m$ and $\omega=\sum_{i=1}^m \pi_i^* \vol_{N_i} \in \Omega^n(N)$, where each $N_i$ is a closed, connected, and oriented Riemannian $n$-manifold with $H_\dR^*(N_i)\ne H_\dR^*(\bS^n)$ and each $\pi_i \colon N\to N_i$ is the projection. Then, by the Künneth theorem and Poincaré duality, $0\ne [\omega]_\dR \in \K$. In particular, if $m=1$, then $0\ne [\vol_N] \in \K$.
\end{example}

Kähler manifolds also give examples of non-trivial Künneth ideals.

\begin{example} \label{ex:symplectic-kunneth}
Let $(N,\omega_\sym)$ be a Kähler manifold. Let $m$ be an integer satisfying $4\le 2m\le \dim N$ and let $\omega = \omega_\sym^{\wedge m} = \omega_\sym \wedge \cdots \wedge \omega_\sym \in \Omega^{2m}(N)$. Then $0\ne [\omega]_\dR \in K^{2m}(N)$.
\end{example}

\subsection*{Examples of non-ellipticity}

We obtain from Theorem~\ref{thm:weaker-main} two families of examples of closed manifolds $N$ and closed non-vanishing forms $\omega$ for which the pair $(N,\omega)$ is not quasiregularly elliptic. Our first family of examples consists of pairs $(N,\omega)$, where $N$ is a $6$-manifold and $\omega$ is a $4$-form.

\begin{corollary} \label{cor:(non-)existence-QRC-type1}
Let $g\ge 1$ and $T_g = \#^g (\bS^1 \times \bS^1)$. Let $\omega_\sym \in \Omega^2(\C P^2)$ be the standard symplectic form. Then $(T_g \times \C P^2,\pi_1^* \vol_{T_g} \wedge \pi_2^* \omega_\sym)$ is quasiregularly elliptic if and only if $g=1$.
\end{corollary}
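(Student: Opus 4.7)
The plan is to treat the two directions separately: for $g=1$ I would construct a quasiregular $\omega$-curve explicitly, and for $g\ge 2$ I would derive a contradiction from Theorem~\ref{thm:weaker-main}.

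For existence when $g=1$, I would identify $T_1 \cong \C/\Lambda$ as an elliptic curve, so that $T_1 \times \C P^2$ becomes a complex $3$-manifold. The map
\[
F \colon \C^2 \to T_1 \times \C P^2, \qquad (z,w) \mapsto ([z]_\Lambda, [w:1:0]),
\]
is a non-constant holomorphic map, and $F^*\omega$ is a non-negative scalar multiple of the standard Euclidean volume form on $\R^4 = \C^2$. By the standard fact that holomorphic maps from $\C^n$ into Kähler manifolds satisfy the comass-norm inequality defining a quasiregular curve (cf.\ \cite{Pankka-QRC}), $F$ is a $1$-quasiregular $\omega$-curve.

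For non-existence when $g\ge 2$, I would argue by contradiction: assume $(T_g \times \C P^2, \omega)$ is quasiregularly elliptic. Since $\omega$ is signed, Theorem~\ref{thm:signed} upgrades the pair to being infinite-energy quasiregularly elliptic. To apply Theorem~\ref{thm:weaker-main}, I would verify that the class $[\omega]_\dR = [\pi_1^* \vol_{T_g}] \wedge [\pi_2^* \omega_\sym]$ is a wedge of two degree-$2$ classes, hence lies in $K^4(T_g \times \C P^2)$, and is non-zero by the Künneth formula since $[\vol_{T_g}]$ and $[\omega_\sym]$ are both non-zero in their respective cohomologies. Theorem~\ref{thm:weaker-main} then produces a graded algebra homomorphism $\Phi \colon H^*_\dR(T_g \times \C P^2) \to \bigwedge^* \R^4$ with $\Phi[\omega]_\dR \ne 0$.

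To produce the contradiction I would fix a symplectic basis $[\alpha_i], [\beta_i]$ ($i=1,\dots,g$) of $H^1_\dR(T_g) = H^1_\dR(T_g \times \C P^2)$ with $[\alpha_i] \wedge [\alpha_j] = [\beta_i] \wedge [\beta_j] = 0$ and $[\alpha_i] \wedge [\beta_j] = \delta_{ij}[\mu]$, where $[\mu]$ generates $H^2_\dR(T_g)$, and set $A_i = \Phi(\pi_1^*[\alpha_i])$ and $B_i = \Phi(\pi_1^*[\beta_i])$ in $\R^4$. Graded multiplicativity of $\Phi$ yields $A_i \wedge A_j = 0 = B_i \wedge B_j$ for all $i,j$, $A_i \wedge B_j = 0$ for $i \ne j$, and $A_i \wedge B_i = \Phi[\mu]$ for every $i$. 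Since in $\R^4$ the vanishing $u \wedge v = 0$ is equivalent to linear dependence, specializing to $i,j \in \{1,2\}$ (available because $g \ge 2$) and tracing through the resulting pairwise dependencies shows that $A_1, A_2, B_1, B_2$ all lie in a single line in $\R^4$; in particular $A_1 \wedge B_1 = 0$, so $\Phi[\mu] = 0$. Because $[\vol_{T_g}]$ is a non-zero scalar multiple of $[\mu]$ in $H^2_\dR(T_g) \cong \R$, I would conclude $\Phi[\pi_1^* \vol_{T_g}] = 0$ and hence $\Phi[\omega]_\dR = 0$, contradicting $\Phi[\omega]_\dR \ne 0$. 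The main conceptual obstacle is the passage from quasiregular to infinite-energy quasiregular ellipticity, required in order to invoke Theorem~\ref{thm:weaker-main}; this is handled by the signed growth theorem~\ref{thm:signed}, after which the cup-product argument is essentially immediate from the surface-group cohomology relations.
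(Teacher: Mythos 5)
Your non-existence argument for $g\ge 2$ is correct and follows essentially the same route as the paper (Proposition~\ref{prop:prod-obstruction-type1}): upgrade to infinite energy via Theorem~\ref{thm:signed}, apply Theorem~\ref{thm:weaker-main} to get a graded algebra homomorphism $\Phi$, and derive a contradiction from the ring structure of $H_\dR^1(T_g)$. Your collinearity argument is in fact a slightly more direct version of the paper's Lemma~\ref{lem:algebra}, and it happens not to depend on the ambient degree $n$: if $\Phi[\mu]\ne 0$, then $A_1,A_2,B_1,B_2$ are all non-zero, and the relations $A_1\wedge A_2 = A_2\wedge B_1 = 0$ force $A_1\parallel A_2\parallel B_1$, so $A_1\wedge B_1=0$, a contradiction.

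However, the existence direction ($g=1$) has a genuine gap. The holomorphic map $F(z,w)=([z]_\Lambda,[w:1:0])$ is \emph{not} a quasiregular $\omega$-curve for $\omega=\pi_1^*\vol_{T_1}\wedge\pi_2^*\omega_\sym$. The defining inequality requires
\[
(\norm{\omega}\circ F)\,\norm{DF}^4\le K\,\bigl(\star F^*\omega\bigr)\quad\text{a.e.}
\]
The first factor $z\mapsto [z]_\Lambda$ is a local isometry, so the block-diagonal structure of $DF$ gives $\norm{DF}=\max\bigl(1,(1+|w|^2)^{-1}\bigr)=1$ for all $(z,w)$. On the other hand, since $\omega_\sym$ restricted to $\{[z_0:z_1:0]\}\cong\C P^1$ is $\vol_{\C P^1}$, one computes $\star F^*\omega=(1+|w|^2)^{-2}\to 0$ as $|w|\to\infty$, while $\norm{\omega}\circ F$ is bounded below on the compact target. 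No constant $K$ rescues the inequality. The ``standard fact'' you cite — that holomorphic maps into Kähler manifolds are $1$-quasiregular curves — is valid for holomorphic curves $\C\to N$ against the Kähler $2$-form (where complex-linearity of $Df$ forces $\norm{Df}^2\sim\star f^*\omega_\sym$ pointwise), but it fails for higher-dimensional holomorphic maps of unbounded distortion, as here. The paper avoids this by replacing the holomorphic chart $w\mapsto[w:1:0]$ with an Alexander map $A\colon\R^2\to\C P^1$ of bounded length distortion, composed with the isometric inclusion $\C P^1\hookrightarrow\C P^2$; the BLD property supplies the uniform two-sided bounds $\norm{DA}\le L$ and $\star A^*\vol_{\C P^1}\ge L^{-2}$ needed for Lemma~\ref{lem:product-of-uniform-is-qr}. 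See Lemma~\ref{lem:inclusion} and the proof of Proposition~\ref{prop:prod-existence-type1}.
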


Corollary~\ref{cor:(non-)existence-QRC-type1} also shows that the quasiregular ellipticity of a pair $(N,\omega)$ depends on both $N$ and $\omega$. Indeed, $(T_g \times \C P^2,\pi_2^* \omega_\sym)$ is quasiregularly elliptic for every $g\ge 1$.

Our second family of not quasiregularly elliptic examples are pairs $(N,\omega)$, where $N$ is a simply connected $8$-manifold and $\omega$ is a $6$-form.

\begin{corollary} \label{cor:(non-)existence-QRC-type2}
Let $\nu \ge 8$ and $S_\nu = \#^\nu (\bS^2 \times \bS^2)$. Let $\omega_\sym \in \Omega^2(\C P^2)$ be the standard symplectic form. Then $(S_\nu \times \C P^2,\pi_1^* \vol_{S_\nu} \wedge \pi_2^* \omega_\sym)$ is not quasiregularly elliptic.
\end{corollary}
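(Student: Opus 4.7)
The strategy is to suppose for contradiction that a non-constant quasiregular $\omega$-curve $F \colon \R^6 \to N = S_\nu \times \C P^2$ with $\omega = \pi_1^* \vol_{S_\nu} \wedge \pi_2^* \omega_\sym$ exists, and to use Theorem~\ref{thm:weaker-main} to extract a graded algebra homomorphism whose very existence is incompatible with the size of $H_\dR^2(S_\nu)$ when $\nu \ge 8$.

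First I would verify the hypotheses of Theorem~\ref{thm:weaker-main}. The form $\omega$ is closed and nowhere vanishing on $N$; its de~Rham class is nonzero by the Künneth theorem (both factors $[\vol_{S_\nu}]_\dR$ and $[\omega_\sym]_\dR$ being nonzero); and it lies in the Künneth ideal $K^6(N)$ because it is a product of de~Rham classes of positive intermediate degrees $4$ and $2$. The infinite-energy hypothesis is supplied by Theorem~\ref{thm:signed}: the form $\omega$ is signed in the sense of \cite{Heikkila}, being a wedge of a volume form with a Kähler form, so the corresponding growth theorem forces any non-constant quasiregular $\omega$-curve from $\R^6$ to have infinite energy.

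Applying Theorem~\ref{thm:weaker-main} then produces a graded algebra homomorphism $\Phi \colon H_\dR^*(N) \to \bigwedge^* \R^6$ with $\Phi[\omega]_\dR \ne 0$. Set $\alpha = \Phi(\pi_1^* [\vol_{S_\nu}]_\dR) \in \bigwedge^4 \R^6$ and $\beta = \Phi(\pi_2^* [\omega_\sym]_\dR) \in \bigwedge^2 \R^6$, so that $\alpha \wedge \beta = \Phi[\omega]_\dR \ne 0$; in particular $\alpha \ne 0$. The crux is then the following injectivity claim for $\Psi := \Phi \circ \pi_1^* \colon H_\dR^2(S_\nu) \to \bigwedge^2 \R^6$: for any $c \in H_\dR^2(S_\nu)$ with $c \ne 0$, Poincaré duality on the closed oriented $4$-manifold $S_\nu$ (whose intersection form is nondegenerate) furnishes $c' \in H_\dR^2(S_\nu)$ with $c \wedge c' = [\vol_{S_\nu}]_\dR$, and then
\[
\Psi(c) \wedge \Psi(c') = \Phi\bigl( \pi_1^* (c \wedge c') \bigr) = \Phi(\pi_1^* [\vol_{S_\nu}]_\dR) = \alpha \ne 0,
\]
forcing $\Psi(c) \ne 0$. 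Consequently $\Psi$ is injective, so $2\nu = \dim H_\dR^2(S_\nu) \le \dim \bigwedge^2 \R^6 = 15$, which is incompatible with $\nu \ge 8$.

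The main obstacle is essentially bookkeeping: confirming that $[\omega]_\dR$ lies in the Künneth ideal and that the signed growth theorem of \cite{Heikkila} applies to this specific $\omega$. The algebraic core, namely transporting the nondegeneracy of the intersection form on $S_\nu$ through any algebra homomorphism that is nonzero on $[\omega]_\dR$, is brief and self-contained once Theorem~\ref{thm:weaker-main} is in hand.
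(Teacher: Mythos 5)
Your proof is correct and follows essentially the same route as the paper: invoke Theorem~\ref{thm:signed} to secure infinite energy, apply Theorem~\ref{thm:weaker-main} (the paper uses Theorem~\ref{thm:main}, but these are interchangeable here) to get a graded algebra homomorphism $\Phi$ with $\Phi[\omega]_\dR \ne 0$, and then derive a dimension bound on $H_\dR^2(S_\nu)$ that fails for $\nu \ge 8$. The only cosmetic difference is in the final algebraic step: the paper fixes a symplectic basis $c_1,\ldots,c_{2\nu}$ of $H_\dR^2(S_\nu)$ and its dual, pushes it forward under $\pi_1^*$, and invokes Lemma~\ref{lem:algebra-2} to get $2\nu \le \binom{6}{2}=15$, whereas you argue directly that $\Phi\circ\pi_1^*\colon H_\dR^2(S_\nu)\to \bigwedge^2\R^6$ is injective by feeding any nonzero $c$ its Poincaré dual $c'$ and using $\Phi(\pi_1^*[\vol_{S_\nu}]_\dR)\ne 0$. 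These are the same nondegeneracy argument packaged slightly differently; yours sidesteps the explicit basis but is otherwise the paper's proof.
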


It is not clear to us, whether the lower bound $\nu \ge 8$ is sharp in Corollary~\ref{cor:(non-)existence-QRC-type2}.

\subsection*{Prywes' theorem for quasiregular curves}

As a byproduct of the proof of Theorem~\ref{thm:weaker-main}, we also obtain the following result, which can be interpreted as a relative of Prywes' theorem for quasiregular curves.

\begin{theorem} \label{thm:submfd}
Let $(N,\omega)$ be infinite energy quasiregularly elliptic, where $N$ is a closed manifold, $\{0\}\ne \bSpan_\R ([\omega]_\dR)=\K$, and $\norm{\omega}>0$. Let $M\subset N$ be a closed, connected, and oriented $n$-submanifold satisfying $\iota^*(H_\dR^*(N))=H_\dR^*(M)$ and $\iota^* [\omega]_\dR \ne 0$, where $\iota \colon M\to N$ is the inclusion. Then $\dim H_\dR^k(M) \le \binom{n}{k}$ for $k=0,\ldots,n$.
\end{theorem}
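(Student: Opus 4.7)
The plan is to combine Theorem~\ref{thm:weaker-main} applied to $(N,\omega)$ with Poincaré duality on $M$. Theorem~\ref{thm:weaker-main} supplies a graded algebra homomorphism $\Phi \colon H_\dR^*(N) \to \bigwedge^* \R^n$ with $\Phi[\omega]_\dR \ne 0$. I will argue that, for each $k$ with $1 \le k \le n-1$, $\Phi$ sends a lifted basis of $H_\dR^k(M)$ to linearly independent elements of $\bigwedge^k \R^n$, yielding $\dim H_\dR^k(M) \le \binom{n}{k}$. The boundary cases $k \in \{0,n\}$ are trivial since $M$ is closed, connected, and oriented of dimension $n$, so $\dim H_\dR^k(M) = 1 = \binom{n}{k}$.

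Fix $1 \le k \le n-1$ and let $d = \dim H_\dR^k(M)$. Poincaré duality on $M$ provides a basis $\xi_1,\ldots,\xi_d$ of $H_\dR^k(M)$ together with a dual basis $\eta_1,\ldots,\eta_d$ of $H_\dR^{n-k}(M)$ satisfying $\xi_i \wedge \eta_j = \delta_{ij}[\vol_M]_\dR$. The surjectivity hypothesis $\iota^*(H_\dR^*(N)) = H_\dR^*(M)$ then gives lifts $\tilde\xi_i \in H_\dR^k(N)$ and $\tilde\eta_j \in H_\dR^{n-k}(N)$. Since both $k\ge 1$ and $n-k \ge 1$, each product $\tilde\xi_i \wedge \tilde\eta_j$ lies in $K^n(N) = \K$, and the hypothesis $\bSpan_\R([\omega]_\dR) = \K$ forces $\tilde\xi_i \wedge \tilde\eta_j = c_{ij} [\omega]_\dR$ for scalars $c_{ij} \in \R$. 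Writing $\iota^*[\omega]_\dR = \lambda [\vol_M]_\dR$ for the non-zero scalar $\lambda$ afforded by $\iota^*[\omega]_\dR \ne 0$, and pulling back the previous equality through $\iota^*$, I obtain $c_{ij} = \delta_{ij}/\lambda$.

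Applying $\Phi$ to $\tilde\xi_i \wedge \tilde\eta_j = (\delta_{ij}/\lambda)[\omega]_\dR$ then yields
\[
\Phi(\tilde\xi_i) \wedge \Phi(\tilde\eta_j) = \frac{\delta_{ij}}{\lambda}\, \Phi[\omega]_\dR,
\]
which is non-zero precisely when $i=j$, since $\Phi[\omega]_\dR \ne 0$. Wedging any linear relation $\sum_i a_i \Phi(\tilde\xi_i) = 0$ with $\Phi(\tilde\eta_j)$ forces $a_j = 0$, so $\Phi(\tilde\xi_1),\ldots,\Phi(\tilde\xi_d)$ are linearly independent in $\bigwedge^k \R^n$ and $d \le \binom{n}{k}$, as desired.

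The main subtlety, rather than a genuine obstacle, is seeing that the one-dimensionality hypothesis $\bSpan_\R([\omega]_\dR) = \K$ is exactly what transports the top-degree Poincaré pairing of $M$ through $N$ into a relation visible to $\Phi$: without it the lifted products would only lie in some larger subspace of $H_\dR^n(N)$ on which $\Phi$ is not controlled. This transport inherently needs both factor degrees to be strictly positive, which is also why the boundary cases $k=0$ and $k=n$ are treated separately.
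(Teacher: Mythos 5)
Your proof is correct, and in fact it gives a slightly more streamlined argument than the paper's for this particular statement. The core algebraic idea is the same as the paper's Proposition~\ref{prop:algebraic-submanifold}: use Poincar\'e duality on $M$ to produce dual bases in degrees $k$ and $n-k$, lift them to $N$ by the surjectivity of $\iota^*$, observe that the wedge products lie in $\K$ and hence are multiples of $[\omega]_\dR$ (this is where $\bSpan_\R([\omega]_\dR)=\K$ and $\iota^*[\omega]_\dR\ne0$ are used), and then apply the algebra homomorphism to deduce linear independence of the images in degree $k$. The difference is that you feed Theorem~\ref{thm:weaker-main} into this argument, so the target is the exterior algebra $\bigwedge^*\R^n$ and the dimension bound is immediate from linear independence, whereas the paper routes through Theorem~\ref{thm:reformulated-main}, whose homomorphism $\Psi$ takes values in the Sobolev-form algebra $\cW^*(B_2^n)$ and additionally records a kernel condition involving $\core_\omega(F)$. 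That extra information is what lets the paper prove the more general Theorem~\ref{thm:submfd-with-core}, from which Theorem~\ref{thm:submfd} follows as the special case where $\ker\iota^*\cap\K=\{0\}$ (which holds here since $\K$ is one-dimensional and $\iota^*[\omega]_\dR\ne0$), at the cost of needing the auxiliary Lemma~\ref{lem:dim-bound-subalgebra} to convert the Sobolev-level pairing into a dimension bound. Your shortcut trades the extra generality of Theorem~\ref{thm:submfd-with-core} for a cleaner derivation of the stated theorem and makes the role of the hypothesis $\bSpan_\R([\omega]_\dR)=\K$ more transparent, which is a nice observation.
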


Theorem~\ref{thm:submfd} has a topological analogue: \emph{Let $F\colon \Sigma \to N$ be a continuous map between smooth manifolds for which $F^*(\K) \ne \{0\}$ for some $n\ge 1$. Let $M\subset N$ be a closed, connected, and oriented $n$-submanifold satisfying $\iota^*(H_\dR^*(N))=H_\dR^*(M)$ and $\ker \iota^* \cap \, \K \subset \ker F^*$, where $\iota \colon M\to N$ is the inclusion. Then there exists an embedding $H_\dR^k(M)\to H_\dR^k(\Sigma)$ for each $k$.} For a formulation of Theorem~\ref{thm:submfd} more similar to this topological analogue, see Theorem~\ref{thm:submfd-with-core}.

\subsection*{Quasiregular curves with respect to closed non-exact forms} Instead of directly proving Theorem~\ref{thm:weaker-main}, we prove a stronger statement for pairs $(N,\omega)$, where $\omega$ is a closed non-exact form, possibly vanishing at some points on $N$.

\begin{theorem} \label{thm:main}
Let $N$ be a closed, connected, and oriented Riemannian manifold. Let $2\le n\le \dim N$ and let $\omega \in \Omega^n(N)$ be a closed form with $0\ne [\omega]_{\dR} \in \K$. Let $F\colon \R^n \to N$ be a quasiregular $\omega$-curve satisfying $\int_{\R^n} F^* \omega = \infty$ and $\inf_{\R^n} (\norm{\omega} \circ F)>0$. Then there exists a graded algebra homomorphism $\Phi \colon H_{\dR}^*(N) \to \bigwedge^* \R^n$ for which $\Phi [\omega]_\dR \ne 0$.
\end{theorem}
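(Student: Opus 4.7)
The plan is to adapt the averaging-and-projection machinery of Prywes and Heikkilä--Pankka to produce $\Phi$ from an asymptotic direction of the pullback $F^*$ on large balls in $\R^n$. Set $E(r) := \int_{B^n(0,r)} F^*\omega$, which tends to $\infty$ by hypothesis. First I would fix closed representatives $\alpha_1,\dots,\alpha_m$ of a basis of $H_\dR^*(N)$, with $\omega$ among them, and for each $r>0$ and each $k$-form $\alpha_i$ consider the best constant $L^{n/k}$-approximation $P_r(F^*\alpha_i) \in \bigwedge^k \R^n$ of $F^*\alpha_i$ on $B^n(0,r)$. Set
\[
\phi_r(\alpha_i) := \frac{\vol(B^n(0,r))^{k/n}}{E(r)^{k/n}}\, P_r(F^*\alpha_i).
\]
The quasiregular $\omega$-curve inequality combined with the hypothesis $\inf_{\R^n}(\norm{\omega}\circ F) > 0$ yields the pointwise estimate $|F^*\alpha_i|^{n/k} \le C\norm{\alpha_i}_\infty^{n/k}\star F^*\omega$, hence $\norm{F^*\alpha_i}_{L^{n/k}(B^n(0,r))}^{n/k} \le C E(r)$. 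Hölder's inequality then gives $|\phi_r(\alpha_i)| \le C_i$ uniformly in $r$, and a diagonal argument in the finite-dimensional space $\bigwedge^* \R^n$ extracts a subsequence $r_j \to \infty$ along which each $\phi_{r_j}(\alpha_i)$ converges to a limit $\Phi(\alpha_i)$.

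Next I would verify that $\Phi$ descends to a well-defined linear map $H_\dR^*(N) \to \bigwedge^* \R^n$. If $\alpha_i' = \alpha_i + d\beta$, then $F^*\alpha_i' - F^*\alpha_i = d(F^*\beta)$, and an Iwaniec--Martin-type Poincaré inequality for Sobolev forms on balls, together with the $L^{n/(k-1)}$-bound on $F^*\beta$ supplied by the same quasiregular estimate, forces $|P_r(d(F^*\beta))|$ to be of strictly smaller order than $E(r)^{k/n}/\vol(B^n(0,r))^{k/n}$. Thus the limit $\Phi(\alpha_i)$ depends only on the class $[\alpha_i]_\dR$.

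The main obstacle, and the heart of the argument, is multiplicativity. Decomposing $F^*\alpha_i = P_r^i + Q_r^i$ with $Q_r^i$ the mean-zero remainder, I would expand
\[
F^*(\alpha_i \wedge \alpha_j) = P_r^i \wedge P_r^j + P_r^i \wedge Q_r^j + Q_r^i \wedge P_r^j + Q_r^i \wedge Q_r^j.
\]
The two linear cross terms project to zero under $P_r$ by orthogonality of the projection, leaving the bilinear remainder $Q_r^i \wedge Q_r^j$. This is controlled by the Hölder bound $\norm{Q_r^i \wedge Q_r^j}_{L^{n/(k+\ell)}} \le \norm{Q_r^i}_{L^{n/k}}\norm{Q_r^j}_{L^{n/\ell}}$, which matches the raw normalization but does not beat it; the crucial point is a Poincaré-type improvement on each $Q_r^i$ that supplies an additional small factor of order $\vol(B^n(0,r))^{1/n}/E(r)^{1/n}$. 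Making this argument rigorous, with the exponents balancing correctly and the quasiregular distortion constant under control, is the expected technical crux.

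Finally, for the non-triviality $\Phi[\omega]_\dR \ne 0$ I would argue as follows. Since $\omega$ pulls back to a top-degree form on $\R^n$, its best constant approximation is explicit: $P_r(F^*\omega) = (E(r)/\vol(B^n(0,r)))\vol_{\R^n}$, so $\phi_r(\omega) = \vol_{\R^n}$ for every $r$, giving $\Phi[\omega]_\dR = \vol_{\R^n} \ne 0$. The Künneth ideal hypothesis $[\omega]_\dR \in \K$ is what reconciles this non-triviality with the graded algebra homomorphism structure of $\Phi$: writing $[\omega]_\dR = \sum_i [c_i] \wedge [c_i']$ with $\deg c_i, \deg c_i' \ge 1$, multiplicativity forces the non-zero limit $\vol_{\R^n}$ to factor as $\sum_i \Phi[c_i] \wedge \Phi[c_i']$, propagating the non-triviality consistently through the algebra structure.
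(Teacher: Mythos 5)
Your outline captures the right overall strategy (normalize pull-backs by a power of the energy, extract a limit, prove multiplicativity, and read off nontriviality), and the pointwise estimate $|F^*\alpha|^{n/k}\lesssim \norm{\alpha}_\infty^{n/k}\star F^*\omega$ is exactly the correct starting inequality. However there are three concrete gaps, each of which the paper handles by a tool you haven't invoked. First, your cross-term argument relies on ``orthogonality of the projection'' $P_r$ in $L^{n/k}$, but for $n/k\ne 2$ the best constant $L^p$-approximation is not a linear projection and has no orthogonality property; if you instead take $P_r$ to be the mean, then $Q_r^i$ has mean zero and $P_r^i\wedge Q_r^j$ averages to zero, but the remaining term $Q_r^i\wedge Q_r^j$ is not controlled by Poincaré, because $F^*\alpha_i$ is weakly \emph{closed} ($dF^*\alpha_i=0$), so the scalar Poincaré inequality $\norm{f-\bar f}_p\lesssim r\norm{df}_p$ is vacuous here and gives no extra decay. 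The step you flag as ``the expected technical crux'' is where the paper does something genuinely different: it does not seek a decay-rate improvement on $Q_r^i$ at all. Instead it rescales to $B_2^n$, passes to a weak limit operator $L:\bigoplus_k H_\dR^k(N)\to\bigoplus_k L^{n/k}(B_2^n;\bigwedge^k\R^n)$, and exploits the \emph{compactness} of the Iwaniec--Lutoborski Poincaré operator $T$ to convert weak convergence $F_j^\#c\weakto Lc$ into norm convergence $TF_j^\#c\to\widehat Lc$. Integration by parts using $dT=\id$ on closed forms then kills exactly the cross term that would otherwise be out of reach (Proposition~\ref{prop:weak-commutativity}). Only at the very end is a common Lebesgue point $x_0$ chosen to land in the finite-dimensional algebra $\bigwedge^*\R^n$.

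Second, both your well-definedness argument and the product estimate implicitly require comparing integrals over $B^n(0,r)$ with integrals over a slightly larger concentric ball (e.g.\ estimating $\int_{\partial B(0,\rho)}F^*\beta$ via the coarea formula on an annulus). For this to beat the normalization $E(r)^{k/n}/\vol(r)^{k/n}$ you need $E(2r)\lesssim E(r)$ along your subsequence $r_j$, and an arbitrary sequence with $E(r_j)\to\infty$ need not satisfy this. This is precisely the role of Rickman's Hunting Lemma (Lemma~\ref{lem:hunting}) in the paper: it produces balls $B(a_j,r_j)$ with $E$ uniformly doubling, and that doubling constant $D$ is baked into the class $\F$ and appears in every estimate downstream. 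Third, the non-triviality argument ``$\phi_r(\omega)=\vol_{\R^n}$, hence $\Phi[\omega]_\dR\ne 0$'' is not yet compatible with multiplicativity: a graded algebra homomorphism must send $[\omega]_\dR=\sum_i c_i\wedge c_i'$ to $\sum_i\Phi c_i\wedge\Phi c_i'$, and it is a theorem, not a tautology, that this sum agrees with the limit of the normalized pull-backs of $\omega$. In the paper this is exactly the content of the extension $d\widehat L_n$ to the Künneth ideal $K^n(N)$ (Lemma~\ref{lem:extension-to-K}) and its identification with the vague limit of $F_j^!\omega$ (Proposition~\ref{prop:limit-measure}, Corollary~\ref{cor:prob-measure}); without this identification, the Künneth hypothesis does no work.
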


\begin{remark}
Formally, $F$ is a quasiregular $\omega_+$-curve $\R^n \to N_+$, where
\[
N_+ = \{ x\in N \colon \omega(x) \ne 0\} \quad \text{ and } \quad \omega_+ = \omega|_{N_+}.
\]
\end{remark}

In Theorem~\ref{thm:main}, we assume that the form is not exact since, if $N$ is a closed, connected, and oriented Riemannian manifold and $\omega \in \Omega^n(N)$, $2\le n\le \dim N$, is an exact form, then every quasiregular $\omega$-curve $F\colon \R^n \to N$ satisfying $\inf_{\R^n} (\norm{\omega} \circ F)>0$ is constant; see \cite[Theorem~1.7]{Pankka-QRC} and \cite[Remark~3.2]{Pankka-QRC}.

\subsection*{Idea of the proof}
The proof of Theorem~\ref{thm:main} follows the proof of the cohomology embedding theorem for closed quasiregularly elliptic manifolds in \cite{Heikkila-Pankka}.

The proof consists of two key ideas. We reduce the global problem of Theorem~\ref{thm:main} to a local setting. For the statements, let $N$ be an oriented Riemannian manifold and let $\omega \in \Omega^n(N)$ be a closed non-exact form, where $2\le n\le \dim N$. For constants $K,D\ge 1$, let $\F$ be the family of non-constant $K$-quasiregular $\omega$-curves $F\colon B_2^n \to N$ satisfying the additional conditions
\[
\int_{B_2^n} F^* \omega \le D \int_{B^n} F^*\omega
\]
and $\inf_{B_2^n} (\norm{\omega} \circ F) \ge D^{-1}$. Here and in what follows, $B^n$ and $B_2^n$ denote the Euclidean balls of radius $1$ and $2$ centered at the origin, respectively. In what follows, we also denote
\[
A_\omega(F) = \int_{B^n} F^* \omega >0
\]
for $F\in \F$.

We are now ready to state a localized reformulation of Theorem~\ref{thm:main}; see also \cite[Theorem~1.3]{Heikkila-Pankka}.

\begin{theorem} \label{thm:sup}
Let $N$ be a closed, connected, and oriented Riemannian manifold. Let $2\le n\le \dim N$ and let $\omega \in \Omega^n(N)$ be a closed form with $0\ne [\omega]_{\dR} \in \K$. Assume also that there exists $K,D\ge 1$ satisfying
\[
\sup_{F\in \F} A_\omega(F) = \infty.
\]
Then there exists a graded algebra homomorphism $\Phi \colon H_{\dR}^*(N) \to \bigwedge^* \R^n$ for which $\Phi [\omega]_{\dR}\ne 0$.
\end{theorem}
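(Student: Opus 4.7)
The plan is to follow the strategy of \cite{Heikkila-Pankka}, adapting the Prywes argument of \cite{PR} to the quasiregular $\omega$-curve setting. From a sequence $F_j\in\F$ with $A_\omega(F_j)\to\infty$, I construct a sequence of approximate graded algebra homomorphisms $\Phi_j\colon H^*_{\dR}(N)\to\bigwedge^*\R^n$ and extract a genuine homomorphism $\Phi$ by compactness of the finite-dimensional target.

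First, using Hodge theory on $N$, I fix harmonic representatives $\alpha_1,\ldots,\alpha_m$ of a graded basis of $H^*_{\dR}(N)$, with $\alpha_i\in\Omega^{k_i}(N)$, and form the normalized pullbacks $\widetilde\beta_{i,j} = A_\omega(F_j)^{-k_i/n}\,F_j^*\alpha_i$ on $B_2^n$. The pointwise bound $\abs{F_j^*\alpha_i}\le\norm{\alpha_i}_\infty\norm{DF_j}^{k_i}$, together with the quasiregular $\omega$-curve inequality, the lower bound $\inf(\norm{\omega}\circ F_j)\ge D^{-1}$, and the near-doubling $\int_{B_2^n}F_j^*\omega\le D\cdot A_\omega(F_j)$, produces uniform $L^{n/k_i}(B_2^n)$-bounds on $\widetilde\beta_{i,j}$; the Gehring-type reverse Hölder inequality for $\omega$-curves (cf.\ \cite{Pankka-QRC}) improves these to uniform $L^{n/k_i+\eps}(B^n)$-bounds.

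Next, since $\widetilde{F_j^*\omega}\ge 0$ has unit $L^1(B^n)$-mass, Markov's inequality yields a set $E_j\subset B^n$ of uniformly positive Lebesgue measure on which $\widetilde{F_j^*\omega}\ge c_0>0$. Using the improved integrability and a Chebyshev-type estimate to control the oscillation sets of the $\widetilde\beta_{i,j}$, a pigeonhole argument produces $p_j\in E_j$ and a scale $r_j\to 0$ such that, for every $i$, the mean of $\widetilde\beta_{i,j}$ over $B(p_j,r_j)$ approximates some constant form $v_{i,j}\in\bigwedge^{k_i}\R^n$ with error vanishing as $j\to\infty$. Setting $\Phi_j([\alpha_i])=v_{i,j}$ and extending $\R$-linearly, the multiplicativity of pullback and of the normalization in degree---via averaging the pointwise identity $\widetilde{F_j^*(\alpha_i\wedge\alpha_{i'})}=\widetilde\beta_{i,j}\wedge\widetilde\beta_{i',j}$ over $B(p_j,r_j)$---makes $\Phi_j$ an asymptotic graded algebra homomorphism. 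Extracting a convergent subsequence then produces the desired $\Phi$. For non-triviality, the Künneth hypothesis $[\omega]_\dR\in\K$ gives a decomposition $[\omega]_\dR = \sum_\ell c_\ell[\alpha_{i_\ell}]\wedge[\alpha_{i'_\ell}]$, so $\Phi([\omega]_\dR)=\sum_\ell c_\ell\,\Phi([\alpha_{i_\ell}])\wedge\Phi([\alpha_{i'_\ell}])$. Applying Stokes's theorem to the identity $F_j^*\omega = \sum_\ell c_\ell F_j^*\alpha_{i_\ell}\wedge F_j^*\alpha_{i'_\ell} + d(F_j^*\eta)$ on $B^n$, and taming the boundary integrals via the $L^{n/k_i+\eps}$-bounds, normalization converts the growth $A_\omega(F_j)\to\infty$ into a strictly positive lower bound on the limit of $\sum_\ell c_\ell v_{i_\ell,j}\wedge v_{i'_\ell,j}$, giving $\Phi([\omega]_\dR)\ne 0$.

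\emph{Main obstacle.} The quantitative heart of the argument is the simultaneous selection of the sample points $p_j$: they must lie in $E_j$ while being common Lebesgue-type points for every $\widetilde\beta_{i,j}$ at the scale $r_j$. This is possible only because of the Gehring-type improved integrability for quasiregular $\omega$-curves, which shrinks the "bad oscillation" set below the measure of $E_j$. A related technical difficulty is the control of the boundary traces of the normalized pullbacks on $\partial B^n$ in the Künneth-based Stokes step of the non-triviality argument, again resting on the improved integrability.
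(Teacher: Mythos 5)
Your proposal takes a genuinely different route from the paper, and the difference is not cosmetic. The paper constructs the homomorphism as a pointwise evaluation of a \emph{weak limit} of the operators $F_j^\#$: one first extracts a weak limit $L$ of the normalized pullbacks in $\oplus_k L^{n/k}$ via Banach--Alaoglu (Lemma~\ref{lem:existence-of-weak-limit}), then upgrades $L$ to a graded algebra homomorphism on all of $H_\dR^*(N)$ via Theorem~\ref{thm:full-limit-extension}. The multiplicativity of $L$ rests on the Iwaniec--Lutoborski Sobolev--Poincar\'e operator $T$ (Lemma~\ref{lem:Sob-Poinc}), whose compactness turns one of the two factors in the wedge into a \emph{strong} limit so that the weak-weak product converges; the exact-form correction coming from non-multiplicativity of the harmonic-representative map $h$ is killed by Lemma~\ref{lem:limit-of-exact}, which exploits the mismatched exponent $A_\omega(F_j)^{-1/n}\to 0$ in the normalization. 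Finally $\Phi$ is obtained by evaluating $L$ at a full-measure point $x_0$ where all finitely many product identities hold. Your proposal avoids all of this and instead chooses sample points $p_j$ and scales $r_j$ by a pigeonhole/Lebesgue-point argument, which is a finite-scale, local averaging argument. This is not the route the paper takes, and no Gehring-type higher integrability estimate appears in the paper's proof.

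The gap in your argument is in the multiplicativity step, and it is exactly the difficulty the paper's Sobolev--Poincar\'e machinery is designed to overcome. Your identity $\widetilde{F_j^*(\alpha_i\wedge\alpha_{i'})}=\widetilde\beta_{i,j}\wedge\widetilde\beta_{i',j}$ is correct pointwise, but $\alpha_i\wedge\alpha_{i'}$ is \emph{not} the harmonic representative of $[\alpha_i]\wedge[\alpha_{i'}]$. Since $\Phi_j$ is defined by linearity on the chosen basis of harmonic representatives, $\Phi_j([\alpha_i]\wedge[\alpha_{i'}])$ is the average over $B(p_j,r_j)$ of the normalized pullback of $h([\alpha_i]\wedge[\alpha_{i'}])=\alpha_i\wedge\alpha_{i'}-d\eta$, not of $\alpha_i\wedge\alpha_{i'}$. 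The discrepancy is the mean of $\widetilde{F_j^*(d\eta)}$ over $B(p_j,r_j)$, and your proposal offers no mechanism to show this vanishes. The mismatched normalization does give $\|A_\omega(F_j)^{-k/n}F_j^*\eta\|_{L^{n/(k-1)}}\lesssim A_\omega(F_j)^{-1/n}\to 0$, but after integrating by parts against a bump function adapted to $B(p_j,r_j)$, the resulting bound is multiplied by $r_j^{-1}$ (from $\|d\varphi\|$), and nothing in the pigeonhole selection controls $r_j$ relative to $A_\omega(F_j)^{-1/n}$. So the asymptotic multiplicativity of $\Phi_j$ is unjustified. The same issue infects the well-definedness of $\Phi_j$ on $\K$, since two Künneth decompositions of $[\omega]_\dR$ differ by an exact form. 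You also invoke a ``Gehring-type reverse H\"older inequality for $\omega$-curves,'' which is not stated in \cite{Pankka-QRC} in the form you need and is in any case unnecessary once the Sobolev--Poincar\'e approach is adopted, since the operator $T$ provides the needed compactness without any integrability improvement.
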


The reduction of Theorem~\ref{thm:main} to Theorem~\ref{thm:sup} follows from Rickman's Hunting Lemma. After the reduction, we construct the homomorphism in Theorem~\ref{thm:sup} as the evaluation of a weak limit of normalized pull-backs.

Let $N$ be a closed, connected, and oriented Riemannian manifold and let
\[
h\colon H_\dR^*(N) \to \Omega^*(N), \quad c\mapsto h_c,
\]
be the map associating to each de Rham class $c\in H_\dR^*(N)$ its unique harmonic representative $h_c \in c$; to see that the map $h$ is well-defined see e.g.~\cite[Theorem 6.11]{WA}. Given $\omega$, $K$ and $D$, we define, for each $F\in \F$, a \emph{normalized pull-back} $F^\# \colon H_\dR^*(N) \to \cW^*(B_2^n)$ by formula
\[
F^\#c = A_\omega(F)^{-\frac{k}{n}} F^*(h(c))
\]
for $c\in H_\dR^k(N)$ and $k=0,\ldots,n$. Here and in what follows, $\cW^*(B_2^n)$ is the graded algebra
\[
\cW^k(B_2^n) = \begin{cases}
W^{d,\frac{n}{k}}(B_2^n; {\bigwedge}^k \R^n), &\, \text{ for } 1\le k\le n, \\
\ker(d\colon \Omega^0(B_2^n) \to \Omega^1(B_2^n)), &\, \text{ for } k=0,
\end{cases}
\]
of Sobolev forms on $B_2^n$; for more details see Section~\ref{sec:prelim-sobolev}.

The normalized pull-backs have the following weak limit property: if $[\omega]_{\dR} \in \K$ and $(F_j)$ is a sequence in $\F$ with the properties that $A_\omega(F_j)\to \infty$ and that the sequence
\[
\left(F_j^\#|_{H_\dR^k(N)} \colon H_\dR^k(N) \to \cW^k(B_2^n)\right)
\]
has a weak limit for each $k=1,\ldots,n-1$, then the limit operator extends to a graded algebra homomorphism $H_\dR^*(N) \to \cW^*(B_2^n)$ which maps $[\omega]_\dR$ non-trivially. More precisely, we prove the following theorem; see also \cite[Theorem~1.7]{Heikkila-Pankka}.

\begin{theorem} \label{thm:limit-extension}
Let $N$ be a closed, connected, and oriented Riemannian manifold. Let $2\le n\le \dim N$ and let $\omega \in \Omega^n(N)$ be a closed form with $0\ne [\omega]_{\dR} \in \K$. Let $K,D\ge 1$. Let $(F_j)$ be a sequence in $\F$ for which $A_\omega(F_j)\to \infty$ and $F_j^\# \weakto L$, where
\[
F_j^\#,L \colon \bigoplus_{k=1}^{n-1} H_\dR^k(N) \to \bigoplus_{k=1}^{n-1} L^\frac{n}{k}(B_2^n; {\bigwedge}^k \R^n).
\]
Then $L$ extends to a graded algebra homomorphism $L\colon H_\dR^*(N)\to \cW^*(B_2^n)$ for which $\int_{B^n} L[\omega]_\dR = 1$.
\end{theorem}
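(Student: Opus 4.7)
The plan is to imitate the proof of the analogous statement in \cite{Heikkila-Pankka} with the modifications needed for quasiregular $\omega$-curves. The argument splits into three phases: (1) uniform $L^{n/k}$-bounds and closedness of the normalized pullbacks, (2) multiplicativity via a compensated compactness argument for closed Sobolev forms, and (3) the normalization $\int_{B^n} L[\omega]_\dR = 1$.

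For the first phase, I would establish that for each $c \in H_\dR^k(N)$ with $1 \le k \le n-1$, the sequence $F_j^\# c$ is uniformly bounded in $L^{n/k}(B_2^n;\bigwedge^k\R^n)$. The pointwise bound $|F_j^* h(c)| \le C_c \|DF_j\|^k$, the $\omega$-curve inequality $(\|\omega\|\circ F_j)\|DF_j\|^n \le K\star F_j^*\omega$ together with the membership in $\F$ (which forces $\inf_{B_2^n}(\|\omega\|\circ F_j) \ge D^{-1}$ and $\int_{B_2^n} F_j^*\omega \le D A_\omega(F_j)$) combine through Hölder's inequality to give $\|F_j^\# c\|_{L^{n/k}(B_2^n)} \le C_c (KD^2)^{k/n}$. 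Since $h(c)$ is harmonic hence closed, each $F_j^\# c$ is closed, and closedness passes to the weak limit, so $Lc \in \cW^k(B_2^n)$.

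The heart of the proof is multiplicativity. For $c \in H_\dR^k(N)$ and $c' \in H_\dR^{k'}(N)$ with $k, k' \ge 1$ and $k+k' \le n$, I would observe that
\[
F_j^\# c \wedge F_j^\# c' - F_j^\#(c\wedge c') = A_\omega(F_j)^{-(k+k')/n}\, d(F_j^*\alpha),
\]
where $\alpha \in \Omega^{k+k'-1}(N)$ is a primitive of $h(c)\wedge h(c') - h(c\wedge c')$ (which exists because both sides represent the de Rham class $c\wedge c'$). Integration by parts against any test form, combined with the Hölder bound $\|F_j^*\alpha\|_{L^{n/(k+k'-1)}} \le C A_\omega(F_j)^{(k+k'-1)/n}$, shows that this error is $O(A_\omega(F_j)^{-1/n}) \to 0$ in distributions. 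Simultaneously, the div--curl lemma for closed differential forms (in the Iwaniec--Martin tradition) gives $F_j^\# c \wedge F_j^\# c' \weakto Lc \wedge Lc'$ distributionally, since each factor is closed in its respective $L^p$ space. The two facts combined yield $F_j^\#(c\wedge c') \weakto Lc \wedge Lc'$: for $k+k' \le n-1$ this forces $L(c\wedge c') = Lc \wedge Lc'$ by uniqueness of weak limits, while for $k+k' = n$ it consistently \emph{defines} $L$ on $\K$ (independence of the chosen Künneth factorization again follows from uniqueness of the weak limit, this time in $L^1$ or in distributions). I would then extend $L$ to $H_\dR^0(N) = \R$ by $L(1) = 1$ and to $H_\dR^n(N)$ by declaring $L = 0$ on a chosen complement of $\K$; the resulting map is a graded algebra homomorphism by construction.

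The hard part will be the normalization $\int_{B^n} L[\omega]_\dR = 1$. Writing $h([\omega]_\dR) = \omega - d\beta$ for some $\beta \in \Omega^{n-1}(N)$, one has
\[
\int_{B^n} F_j^\#[\omega]_\dR = 1 - A_\omega(F_j)^{-1}\int_{\partial B^n} F_j^*\beta,
\]
but only the averaged estimate $\int_1^2 \bigl|\int_{\partial B(0,r)} F_j^*\beta\bigr|\, dr \le C A_\omega(F_j)^{(n-1)/n}$ is directly available from Hölder; we cannot guarantee the boundary integral over the specific sphere $\partial B^n$ is $o(A_\omega(F_j))$. The resolution I anticipate, following \cite{Heikkila-Pankka}, is a Fubini/coarea selection of good radii $r_j \in (1,2)$ together with the affine rescaling invariance of $\F$ in the domain, to arrange that the weak limit produces mass exactly $1$ on $B^n$. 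This is the technically subtlest point and is the only place where the specific shape of the normalization (as opposed to merely $\int_{B^n} L[\omega]_\dR \ne 0$) enters non-trivially.
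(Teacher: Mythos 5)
Your phases 1 and 2 track the paper's proof accurately. The uniform $L^{n/k}$-bound for $F_j^\# c$ is exactly the paper's Lemma~\ref{lem:normalized-bounded}, your decay of the exact error term $A_\omega(F_j)^{-(k+k')/n}\,d(F_j^*\alpha)$ is the paper's Lemma~\ref{lem:limit-of-exact}, and the div--curl statement you invoke as a black box is proven in the paper via the Iwaniec--Lutoborski Poincaré operator $T$ and the Sobolev--Poincaré limit (Lemma~\ref{lem:Sob-Poinc}, Proposition~\ref{prop:weak-commutativity}): the compactness of $T$ supplies the strongly convergent potential that a div--curl lemma needs. Your observation that the weak limits on products furnish a well-defined map on $\K$ independent of the chosen Künneth representation is precisely Lemma~\ref{lem:extension-to-K}. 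So up to phrasing (black-box div--curl vs.\ the paper's explicit Poincaré-operator implementation) these phases are the same argument.

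Phase 3 is where you depart from the paper and where your plan is incomplete. You are working with $F_j^\#[\omega]_\dR = A_\omega(F_j)^{-1}F_j^*(h([\omega]_\dR))$ and trying to kill a boundary term, but the paper avoids that object entirely. It instead uses $F_j^!\omega = A_\omega(F_j)^{-1}F_j^*\omega$, which satisfies $\int_{B^n}F_j^!\omega = 1$ \emph{identically} by the definition of $A_\omega$, so there is no boundary integral to estimate. One then shows $F_j^!\omega \weakto \mu := d\widehat{L}_n[\omega]_\dR$ vaguely on $B_2^n$ (Proposition~\ref{prop:limit-measure}, which is your Phase~2 machinery applied to $\omega$ itself) and uses that $\mu \in L^1(B_2^n;\bigwedge^n\R^n)$ is an absolutely continuous non-negative density, hence does not charge $\partial B^n$. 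This upgrades the vague convergence against $C_0(B_2^n)$ to convergence of $\int_{\bar B^n}\varphi\, F_j^!\omega$ for every $\varphi\in C(\bar B^n)$ via a sandwich with cutoffs $\psi_r \le \chi_{\bar B^n} \le \psi_R$ and $\mu(B^n)=\mu(\bar B^n)$ (Corollary~\ref{cor:prob-measure}). Setting $\varphi=1$ gives $\mu(B^n)=1$ directly. Your proposed good-radius selection via Fubini together with rescaling invariance of $\F$ is not what the cited paper does, and as stated it has a gap: rescaling the domain radius $1\to\rho_j$ changes the normalization $A_\omega$ and the constant $D$, so one would need an extra argument that the re-normalized sequence still lies in a fixed $\cF_{K,D'}(N,\omega)$ and still has $A_\omega\to\infty$. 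The absolute-continuity route renders all of this unnecessary.
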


In general, the choice of harmonic representative $h\colon H_\dR^*(N) \to \Omega^*(N)$ is not an algebra homomorphism and hence neither are the normalized pull-backs $F_j^\#$. However, Theorem~\ref{thm:limit-extension} shows that the limiting operator $L$ commutes with the exterior product.

\subsection*{Organization of the article}

In Section~\ref{sec:reduction}, we discuss how Theorem~\ref{thm:main} reduces to Theorem~\ref{thm:sup}. In Section~\ref{sec:prelim}, we recall the necessary basics of Sobolev forms and prove estimates for normalized pull-backs. In Section~\ref{sec:Sobev--Poincaré}, we prove Theorem~\ref{thm:limit-extension}. In Section~\ref{sec:proof-of-thm:sup}, we give the proof of Theorem~\ref{thm:sup}. In Section~\ref{sec:submanifold}, we prove Theorem~\ref{thm:submfd}. Lastly, in Section~\ref{sec:example}, we prove Corollaries~\ref{cor:(non-)existence-QRC-type1} and \ref{cor:(non-)existence-QRC-type2}.

\subsection*{Acknowledgements}

The author thanks her thesis advisor Pekka Pankka for all his help improving the manuscript. We also thank Eero Hakavuori and Toni Ikonen for comments and discussions on these topics.

\section{Reduction of Theorem~\ref{thm:main} to Theorem~\ref{thm:sup}} \label{sec:reduction}

In this section, we reduce Theorem~\ref{thm:main} to Theorem~\ref{thm:sup} with the following proposition; see also \cite[Proposition~2.1]{Heikkila-Pankka} and \cite[Section~4]{PR}.

\begin{proposition} \label{prop:reduction}
Let $N$ be a closed, connected, and oriented Riemannian manifold. Let $\omega \in \Omega^n(N)$ be a closed form with $2\le n\le \dim N$ and $0\ne [\omega]_\dR \in \K$. Let $K\ge 1$ and let $F\colon \R^n \to N$ be a $K$-quasiregular $\omega$-curve satisfying $\int_{\R^n} F^* \omega = \infty$ and $\inf_{\R^n} (\norm{\omega} \circ F)>0$. Then there exist a constant $D=D(n,\omega,F)>1$ and a sequence of pairs $(a_j,r_j) \in \R^n \times (0,\infty)$ for which
\[
F\circ T_j \in \F \quad \text{ and } \quad \lim_{j\to \infty} A_\omega(F\circ T_j)=\infty,
\]
where $T_j\colon \R^n \to \R^n$ is the affine map $x\mapsto r_jx+a_j$. In particular,
\[
\sup_{F' \in \F} A_\omega(F') = \infty.
\]
\end{proposition}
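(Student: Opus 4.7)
The plan is to extract the sequence $(a_j, r_j)$ by applying Rickman's Hunting Lemma to the pullback $\mu = F^*\omega$, viewed as a non-negative Radon measure on $\R^n$. Non-negativity is immediate from the quasiregularity inequality $(\norm{\omega}\circ F)\norm{DF}^n \le K(\star F^*\omega)$, and $\mu(\R^n)=\int_{\R^n} F^*\omega = \infty$ by hypothesis. I would first fix a constant $D_0 \ge 1$ with $D_0^{-1}\le \inf_{\R^n}(\norm{\omega}\circ F)$; this is possible since the infimum is strictly positive, and the choice already handles the pointwise comass requirement for $\F$ under any affine reparametrization of $F$.

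The next step is to invoke Rickman's Hunting Lemma, in the form used in \cite[Proposition~2.1]{Heikkila-Pankka} and \cite[Section~4]{PR}, to produce pairs $(a_j,r_j)\in \R^n\times(0,\infty)$ and a constant $D_1\ge 1$ satisfying
\[
\mu(B^n(a_j, r_j))\to \infty\quad\text{and}\quad \mu(B^n(a_j, 2r_j))\le D_1\,\mu(B^n(a_j, r_j)).
\]
I would then set $D=\max\{D_0,D_1\}$ and $T_j(x)=r_j x+a_j$, and verify that each $F\circ T_j$ lies in $\F$. Because $\norm{D(F\circ T_j)}^n = r_j^n(\norm{DF}^n\circ T_j)$ and $\star(F\circ T_j)^*\omega = r_j^n(\star F^*\omega)\circ T_j$ scale identically under $T_j$, the $K$-quasiregularity inequality survives unchanged. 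The change of variables for the non-negative $n$-form $F^*\omega$ yields $\int_{B^n(0,r)}(F\circ T_j)^*\omega = \mu(B^n(a_j, r\,r_j))$ for every $r>0$, so the Hunting Lemma output becomes the energy-ratio bound
\[
\int_{B_2^n}(F\circ T_j)^*\omega\le D\int_{B^n}(F\circ T_j)^*\omega.
\]
The pointwise comass bound $\norm{\omega}\circ F\circ T_j\ge D_0^{-1}\ge D^{-1}$ on $B_2^n$ is automatic by the choice of $D_0$, and non-constancy of $F\circ T_j$ follows from $A_\omega(F\circ T_j)=\mu(B^n(a_j,r_j))>0$. Hence $F\circ T_j\in \F$ and $A_\omega(F\circ T_j)\to\infty$, giving both conclusions of the proposition.

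The technical heart is Rickman's Hunting Lemma, which I would not reprove: the standard Besicovitch-type covering argument balancing mass against scale is already present in the cited references. The remaining work is a bookkeeping exercise tracking how both sides of the quasiregularity inequality scale under affine dilations and translations, together with one change of variables for $n$-forms.
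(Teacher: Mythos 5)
Your proposal follows the same route as the paper's own proof: apply Rickman's Hunting Lemma to the non-negative Radon measure $\mu = F^*\omega$, choose $D$ to absorb both the Hunting Lemma ratio constant and the lower bound on $\norm{\omega}\circ F$, and observe that the quasiregularity inequality is invariant under affine reparametrization. You spell out the scaling/change-of-variables bookkeeping that the paper leaves implicit, but there is no substantive difference.
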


The proof of Proposition~\ref{prop:reduction} is based on Rickman's Hunting Lemma \cite[Lemma~5.1]{RI3}. The following formulation is due to Bonk and Poggi-Corradini \cite[Lemma~2.1]{BP}.

\begin{lemma}[Rickman's Hunting Lemma] \label{lem:hunting}
Let $\mu$ be an atomless Borel measure on $\R^n$ satisfying $\mu(\R^n)=\infty$ and $\mu(B)<\infty$ for every ball $B\subset \R^n$. Then there exists a constant $D=D(n)>1$ with the property that, for every $j\in \N$, there exists a ball $B\subset \R^n$ for which
\[
j\le \mu(2B)\le D\mu(B).
\]
\end{lemma}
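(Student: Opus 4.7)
The plan is to argue by contradiction, using a covering plus iteration scheme to construct a sequence of balls whose radii shrink geometrically but whose $\mu$-measure grows geometrically, with all of the balls trapped inside a single bounded region. The local finiteness of $\mu$ then delivers the contradiction.

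Let $N(n)$ be a purely dimensional constant such that every Euclidean ball of radius $r$ can be covered by at most $N(n)$ balls of radius $r/4$. I would set $D:=2N(n)$, and suppose toward a contradiction that there exists $j\in\N$ with the property that every ball $B\subset \R^n$ satisfies either $\mu(2B)<j$ or $\mu(2B)>D\mu(B)$. Since $\mu(\R^n)=\infty$ and $\mu$ is finite on balls, $\mu(B(0,R))\to\infty$ as $R\to\infty$, so I may fix an initial ball $B_0=B(c_0,r_0)$ with $\mu(B_0)\ge N(n)j$.

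The heart of the argument is an inductive construction producing balls $B_k=B(c_k,r_k)$ satisfying $r_{k+1}=r_k/2$, $|c_{k+1}-c_k|\le 5r_k/4$, and $\mu(B_k)\ge 2^kN(n)j$. Given $B_k$, cover it by $N(n)$ balls of radius $r_k/4$; by pigeonhole, some covering ball $C_k$ satisfies $\mu(C_k)\ge \mu(B_k)/N(n)\ge 2^kj\ge j$. In particular $\mu(2C_k)\ge \mu(C_k)\ge j$, so the contradiction hypothesis forces $\mu(2C_k)>D\mu(C_k)\ge 2\mu(B_k)$. Setting $B_{k+1}:=2C_k$ yields a ball of radius $r_k/2$ with $\mu(B_{k+1})\ge 2^{k+1}N(n)j$, extending the induction. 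Because $C_k$ meets $B_k$ and has radius $r_k/4$, the triangle inequality gives $|c_{k+1}-c_k|\le r_k+r_k/4=5r_k/4$.

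Summing a geometric series then gives $|c_k-c_0|\le \sum_{i=0}^{k-1}\tfrac{5r_0}{4\cdot 2^i}<\tfrac{5r_0}{2}$ for every $k$, so every $B_k$ is contained in the fixed ball $B(c_0,4r_0)$. Combined with $\mu(B_k)\ge 2^kN(n)j\to\infty$, this would force $\mu(B(c_0,4r_0))=\infty$, contradicting the finiteness of $\mu$ on balls. The main technical subtlety I anticipate is calibrating the covering scale: a covering ratio strictly less than $1/2$ is needed both so that the radii $r_k$ shrink geometrically and so that the series of center displacements converges, which is precisely what pins the $B_k$ inside a single bounded region; the explicit choice $r/4$ makes both estimates clean and forces $D$ to depend only on $n$.
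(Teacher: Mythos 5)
The paper itself does not prove Lemma~\ref{lem:hunting}; it cites Rickman~\cite{RI3} and Bonk--Poggi-Corradini~\cite{BP}, so there is no in-paper proof to compare against, but your argument is correct and is essentially the standard hunting scheme used in those sources. The bookkeeping checks out: with $D=2N(n)$, pigeonholing on a cover of $B_k$ by at most $N(n)$ balls of radius $r_k/4$ (all taken to meet $B_k$) yields $C_k$ with $\mu(C_k)\ge \mu(B_k)/N(n)\ge 2^kj\ge j$, so the dichotomy forces $\mu(2C_k)>D\mu(C_k)\ge 2\mu(B_k)$; setting $B_{k+1}=2C_k$ halves the radius while at least doubling the mass, the center displacements $|c_{k+1}-c_k|\le 5r_k/4$ sum to a geometric series bounded by $5r_0/2$, and hence all $B_k$ lie in $B(c_0,4r_0)$, contradicting local finiteness. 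One small remark: your proof never invokes the atomlessness of $\mu$, which appears in the hypotheses; as written you therefore establish the stated conclusion under weaker assumptions, which is harmless here, but it is worth flagging so a reader does not hunt for a hidden use of that hypothesis.
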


Proposition~\ref{prop:reduction} is an immediate consequence.

\begin{proof}[Proof of Proposition~\ref{prop:reduction}]
Since $\int_{\R^n} F^* \omega = \infty$, the Borel measure $\mu$ on $\R^n$ defined by $\mu(E)=\int_E F^* \omega$ satisfies the assumptions of Lemma~\ref{lem:hunting}. Hence, there exists a constant $D=D(n)>1$ and a sequence of balls $(B^n(a_j,r_j))$ for which
\[
j\le \int_{B^n(a_j,2r_j)} F^* \omega \le D \int_{B^n(a_j,r_j)} F^* \omega.
\]
Let $D'=D'(n,\omega,F)\ge D$ be a constant for which $\inf_{\R^n} (\norm{\omega} \circ F)\ge (D')^{-1}$. We obtain that the maps $F_j \colon B_2^n \to N$, $x\mapsto F(r_jx+a_j)$, form a sequence in $\cF_{K,D'}(N,\omega)$ satisfying $A_\omega(F_j)\to \infty$. The claim follows.
\end{proof}

The following theorem gives a sufficient condition for a quasiregular curve to have infinite energy. The theorem is a special case of the Bonk--Heinonen type growth result for signed quasiregular curves; see \cite[Theorem 1.1]{Heikkila}.

\begin{theorem}{ \emph{(\cite[Theorem 1.1]{Heikkila})} } \label{thm:signed}
Let $N$ be a connected and oriented Riemannian manifold. Let $\omega \in \Omega^n(N)$ be a closed non-vanishing form with $2\le n\le \dim N$ and
\[
\omega \in \bSpan_\R \left( \bigcup_{k=1}^{n-1} \mathcal{Z}^k(N) \wedge \mathcal{Z}^{n-k}(N) \right),
\]
where $\mathcal{Z}^k(N)$ denotes the space of all smooth, bounded, and closed $k$-forms on $N$. Then every non-constant $K$-quasiregular $\omega$-curve $F\colon \R^n \to N$ satisfies
\[
\liminf_{r\to \infty} \frac{\int_{B^n(0,r)} F^* \omega}{r^\varepsilon} >0,
\] 
where $\varepsilon=\varepsilon(n,K,\omega)>0$.
\end{theorem}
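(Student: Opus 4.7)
I would adopt the Bonk--Heinonen paradigm and derive a differential inequality for the area function
\[
A(r) := \int_{B^n(0,r)} F^*\omega
\]
of the form $A(r) \le C\, r^a\, A'(r)^b$ with $b>1$, whose integration forces $A(r) \ge c\, r^\varepsilon$ for some $\varepsilon>0$. The structural input replacing the cohomological argument used in the closed-manifold Bonk--Heinonen theorem is the assumed decomposition of $\omega$ as a linear combination of wedges of smooth, bounded, closed forms; this is exactly what enables a Stokes reduction even though $\omega$ itself is not exact on $N$.

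\textbf{Stokes reduction.} Write $\omega = \sum_i c_i\, \alpha_i \wedge \beta_i$ with $\alpha_i$ of degree $k_i$ and $\beta_i$ of degree $n-k_i$, all smooth, bounded, and closed. Since $\R^n$ is contractible, each closed Sobolev form $F^*\alpha_i$ admits a Sobolev primitive $\sigma_i$ with $d\sigma_i = F^*\alpha_i$. Because $F^*\beta_i$ is also closed, the Leibniz rule gives $F^*(\alpha_i\wedge\beta_i) = d(\sigma_i \wedge F^*\beta_i)$, and Stokes' theorem then yields
\[
A(r) = \sum_i c_i \int_{\partial B^n(0,r)} \sigma_i \wedge F^*\beta_i.
\]

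\textbf{Estimation and ODE.} On each sphere I would apply Hölder with exponents $n/k_i$ and $n/(n-k_i)$, and bound the pullbacks pointwise via $|F^*\alpha_i| \le \|\alpha_i\|_\infty \|DF\|^{k_i}$ and similarly for $\beta_i$. A scaled Sobolev--Poincaré inequality on the sphere, applied to $\sigma_i$ with the mean subtracted, bounds $\|\sigma_i\|_{L^{n/k_i}(\partial B^n(0,r))}$ by a multiple of $r\cdot \|F^*\alpha_i\|_{L^{n/k_i}(B^n(0,r))}$. Combined with the coarea identity $A'(r) = \int_{\partial B^n(0,r)} \star F^*\omega$ and the quasiregularity bound relating $\|DF\|^n$ to $\star F^*\omega$, the result is a Gehring-style inequality
\[
A(r) \le C\, r\, A'(r)^{(n-1)/n}\, A(r)^{1/n},
\]
from which $(A(r)^{1-1/n})' \ge c\, r^{-1}$, and integration yields $A(r) \ge c\, r^\varepsilon$ for some $\varepsilon = \varepsilon(n,K,\omega) > 0$.

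\textbf{Main obstacle.} The principal difficulty is engineering the Sobolev primitive $\sigma_i$ so that its traces on spheres are controlled in $L^{n/k_i}$ by $\|F^*\alpha_i\|_{L^{n/k_i}}$; this requires an Iwaniec--Lutoborski-type Poincaré estimate on balls together with a careful choice of primitive adapted to the spherical foliation. A secondary subtlety is that the quasiregularity inequality $(\|\omega\|\circ F)\|DF\|^n \le K\, \star F^*\omega$ only yields clean control of $\|DF\|^n$ by $\star F^*\omega$ where $\|\omega\|\circ F$ is bounded below. Since $\omega$ is only assumed non-vanishing, one must use the product structure directly, bounding $|F^*(\alpha_i\wedge\beta_i)|$ by $\|\alpha_i\|_\infty\|\beta_i\|_\infty\,\|DF\|^n$ and exploiting only the sign condition $\star F^*\omega \ge 0$ a.e.\ from quasiregularity. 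Once these technicalities are in place, the ODE step is routine.
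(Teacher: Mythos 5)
The theorem you are asked to prove is imported by this paper directly from \cite[Theorem~1.1]{Heikkila} (the author's own earlier work on signed quasiregular curves); the present paper cites it without proof and does not develop the argument, so there is no internal proof to compare against. I can therefore only assess your proposal on its own terms.

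Your overall Bonk--Heinonen-style strategy (Stokes reduction to a boundary integral using a Sobolev primitive, Hölder and Sobolev--Poincaré on spheres, differential inequality) is the right shape, and you correctly identify that the decomposition $\omega=\sum_i c_i\,\alpha_i\wedge\beta_i$ with bounded closed factors is what replaces the harmonic-form input in the closed-manifold case. However, the differential inequality you write down does not close. From
\[
A(r)\le C\,r\,A'(r)^{(n-1)/n}\,A(r)^{1/n}
\]
one obtains $A\le (Cr)^{n/(n-1)}A'$, hence $(\log A)'\ge c\,r^{-n/(n-1)}$; since $n/(n-1)>1$ the integral $\int_1^\infty r^{-n/(n-1)}\,\mathrm{d}r$ converges, so this yields only that $\log A(R)$ is bounded, not that $A(R)\ge cR^\varepsilon$. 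Your assertion that it implies $(A^{1-1/n})'\ge c/r$ requires the a priori estimate $A(r)\gtrsim r^{n/(n-1)^2}$, i.e.~it presupposes the conclusion. What you need is the stronger $A(r)\le C\,r\,A'(r)$, and your estimates, run with the correct bookkeeping, should give exactly this: on $\partial B^n(0,r)$ apply Hölder with the conjugate pair $n/k_i$ and $n/(n-k_i)$, use the spherical Poincaré inequality at the \emph{same} exponent $n/k_i$ on both sides (which carries a factor $r$, and the closed-form indeterminacy of $\sigma_i$ is harmless because wedging an exact form on the sphere against the closed $F^*\beta_i$ integrates to zero), and the two boundary norms then combine as $A'(r)^{k_i/n}\cdot A'(r)^{(n-k_i)/n}=A'(r)$. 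Integrating $A\le C\,r\,A'$ gives $A(R)\ge A(1)R^{1/C}$, which is the assertion.

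The second issue you flag is genuine and not actually resolved by what you write. Without a lower bound on $\|\omega\|\circ F$, the quasiregularity inequality $(\|\omega\|\circ F)\|DF\|^n\le K\,\star F^*\omega$ does not yield $\int_{\partial B_r}\|DF\|^n\lesssim A'(r)$, and this bound is what your Hölder step requires. Simply invoking the sign $\star F^*\omega\ge0$ and the pointwise bound $\|F^*(\alpha_i\wedge\beta_i)\|\le\|\alpha_i\|_\infty\|\beta_i\|_\infty\|DF\|^n$ does not close the loop, since that gives an upper bound of $\star F^*\omega$ by $\|DF\|^n$ rather than the reverse. In the compact applications in this paper the lower bound on $\|\omega\|$ is automatic, but the statement as cited allows non-compact $N$, and your sketch leaves this gap open.
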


Theorem~\ref{thm:signed} implies trivially that non-constant quasiregular $\omega_\sym^{\wedge m}$-curves $\R^{2m} \to N$, where $\omega_\sym^{\wedge m}$ is as in Example~\ref{ex:symplectic-kunneth}, have infinite energy. It also follows straightforwardly that non-constant quasiregular $\left(\sum_{i=1}^m \pi_i^* \vol_{N_i}\right)$-curves $\R^n \to N_1 \times \cdots \times N_m$, where $\sum_{i=1}^m \pi_i^* \vol_{N_i}$ is as in Example~\ref{ex:product-kunneth}, have infinite energy.

\begin{corollary} \label{cor:signed-product-volume}
Let $n\ge 2$ and let $N_1,\ldots,N_m$ be closed, connected, and oriented Riemannian $n$-manifolds with $H_\dR^*(N_i)\ne H_\dR^*(\bS^n)$. Let $N=N_1 \times \cdots \times N_m$ and $\omega = \sum_{i=1}^m \pi_i^* \vol_{N_i} \in \Omega^n(N)$. Then every non-constant quasiregular $\omega$-curve $F\colon \R^n \to N$ satisfies $\int_{\R^n} F^* \omega = \infty$.
\end{corollary}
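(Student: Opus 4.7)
The plan is to apply Theorem~\ref{thm:signed} directly to $\omega = \sum_{i=1}^m \pi_i^* \vol_{N_i}$; once the hypotheses are verified, its conclusion gives $\liminf_{r\to\infty} r^{-\varepsilon}\int_{B^n(0,r)} F^*\omega > 0$ for some $\varepsilon > 0$, which forces $\int_{\R^n} F^* \omega = \infty$. Two properties of $\omega$ need to be checked: that $\omega$ is a closed non-vanishing $n$-form on $N$, and that $\omega \in \bSpan_\R\bigl(\bigcup_{k=1}^{n-1} \mathcal{Z}^k(N) \wedge \mathcal{Z}^{n-k}(N)\bigr)$. Closedness is inherited from the $\vol_{N_i}$ by naturality of $d$ under pullbacks; non-vanishing follows from the direct sum decomposition $T_x N = \bigoplus_i T_{x_i} N_i$, since choosing an $n$-tuple of vectors lying entirely in a single summand $T_{x_i}N_i$ makes $\omega_x$ evaluate to $(\vol_{N_i})_{x_i}$, which is non-zero.

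For the span condition, by pullback functoriality (which preserves closedness and the wedge product) it suffices to show that each $\vol_{N_i}$ on $N_i$ lies in $\bSpan_\R\bigl(\bigcup_{k=1}^{n-1} \mathcal{Z}^k(N_i) \wedge \mathcal{Z}^{n-k}(N_i)\bigr)$. Since $H_\dR^*(N_i) \ne H_\dR^*(\bS^n)$, there exists $k_i \in \{1,\ldots,n-1\}$ with $H_\dR^{k_i}(N_i)\ne 0$, and Poincaré duality on the closed oriented $n$-manifold $N_i$ then yields closed smooth forms $\alpha_i \in \Omega^{k_i}(N_i)$, $\beta_i \in \Omega^{n-k_i}(N_i)$ satisfying $[\alpha_i \wedge \beta_i]_\dR = c_i [\vol_{N_i}]_\dR$ for some $c_i \ne 0$. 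Choosing $\tau_i \in \Omega^{n-1}(N_i)$ with $\alpha_i \wedge \beta_i - c_i \vol_{N_i} = d\tau_i$, we rewrite this as
\[
c_i \vol_{N_i} \;=\; \alpha_i \wedge \beta_i \;-\; d\tau_i,
\]
in which the first summand is manifestly in $\mathcal{Z}^{k_i}(N_i) \wedge \mathcal{Z}^{n-k_i}(N_i)$; the remaining task is to place the exact $n$-form $d\tau_i$ in the span.

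To absorb $d\tau_i$, I would use a Whitney embedding $N_i \hookrightarrow \R^L$ and the pulled-back coordinate $1$-forms $dx_1|_{N_i},\ldots,dx_L|_{N_i}$: these are closed smooth bounded $1$-forms which span $T^*_p N_i$ at every $p \in N_i$, so their $(n-1)$-fold wedges span $\bigwedge^{n-1} T^*_p N_i$ pointwise. A finite open cover of $N_i$ trivializing a choice of basis among these wedges, combined with a subordinate partition of unity, then produces smooth functions $f_{i,J} \in C^\infty(N_i)$ for $|J| = n-1$ with $\tau_i = \sum_J f_{i,J}\, dx_J|_{N_i}$, and hence
\[
d\tau_i \;=\; \sum_{|J|=n-1} df_{i,J} \wedge dx_J|_{N_i} \;\in\; \bSpan_\R\bigl(\mathcal{Z}^1(N_i) \wedge \mathcal{Z}^{n-1}(N_i)\bigr).
\]
Summing the pullbacks $\pi_i^*$ of these decompositions over $i$ places $\omega$ in the required span on $N$, and Theorem~\ref{thm:signed} finishes the proof. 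The only step that is not purely algebraic is this partition-of-unity decomposition of $\tau_i$ through embedded coordinate differentials; this is where I expect the mild technical subtlety to sit, although the argument is standard once the pointwise spanning of $\bigwedge^{n-1} T^* N_i$ by coordinate wedges is made explicit.
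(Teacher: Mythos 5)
Your proposal is correct, and it follows the same overall architecture as the paper (pull back a factor-wise decomposition of the volume forms, then invoke Theorem~\ref{thm:signed}), but it differs in one concrete place: the paper establishes
\[
\vol_{N_i} \in \bSpan_\R \Bigl( \bigcup_{k=1}^{n-1} \mathcal{Z}^k(N_i) \wedge \mathcal{Z}^{n-k}(N_i) \Bigr)
\]
simply by citing \cite[Proposition~2.13]{HIMO}, whereas you reprove this inclusion from scratch. Your two-step argument—first using $H_\dR^*(N_i)\ne H_\dR^*(\bS^n)$ together with Poincaré duality to produce closed $\alpha_i,\beta_i$ with $[\alpha_i\wedge\beta_i]_\dR = c_i[\vol_{N_i}]_\dR$, $c_i\ne 0$, and then absorbing the exact remainder $d\tau_i$ via a Whitney embedding $N_i\hookrightarrow\R^L$, writing $\tau_i=\sum_J f_{i,J}\,dx_J|_{N_i}$ so that $d\tau_i=\sum_J df_{i,J}\wedge dx_J|_{N_i}\in\bSpan_\R(\mathcal{Z}^1\wedge\mathcal{Z}^{n-1})$—is a sound and essentially complete replacement for that citation; the only technical care required is choosing the coefficient functions $f_{i,J}$ smoothly from the overcomplete spanning family $\{dx_J|_{N_i}\}$, which your partition-of-unity remark handles (equivalently, one can take a smooth right inverse of the surjective bundle map $N_i\times\bigwedge^{n-1}\R^L\to\bigwedge^{n-1}T^*N_i$). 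The paper's route buys brevity at the cost of an external reference; yours buys self-containment and makes visible exactly why the hypothesis $H_\dR^*(N_i)\ne H_\dR^*(\bS^n)$ is what is needed. Your preliminary verifications that $\omega$ is closed and non-vanishing are correct but largely redundant here, since the corollary already presupposes $F$ is a quasiregular $\omega$-curve, which by definition requires these properties of $\omega$.
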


\begin{proof}
By \cite[Proposition~2.13]{HIMO}, it holds that
\[
\vol_{N_i} \in \bSpan_\R \left( \bigcup_{k=1}^{n-1} \mathcal{Z}^k(N_i) \wedge \mathcal{Z}^{n-k}(N_i) \right)
\]
for each $i$. Thus,
\[
\omega \in \bSpan_\R \left( \bigcup_{k=1}^{n-1} \mathcal{Z}^k(N) \wedge \mathcal{Z}^{n-k}(N) \right)
\]
and the claim follows by Theorem~\ref{thm:signed}.
\end{proof}

\section{Preliminaries} \label{sec:prelim}

\subsection*{Sobolev forms} \label{sec:prelim-sobolev}

We begin by reviewing the basics of Sobolev spaces of differential forms. A $(k+1)$-form $d\tau \in L_\loc^1(B_2^n; \bigwedge^{k+1} \R^n)$ is the \emph{weak exterior derivative} of a $k$-form $\tau \in L_\loc^1(B_2^n; \bigwedge^k \R^n)$ if
\[
\int_{B_2^n} \tau \wedge d\varphi = (-1)^{k+1} \int_{B_2^n} d\tau \wedge \varphi
\]
for every $\varphi \in \Omega_0^{n-k-1}(B_2^n)$. For $k=0,\ldots,n-1$, $W^{d,\frac{n}{k}}(B_2^n; \bigwedge^k \R^n)$ denotes the Sobolev space of forms $\tau \in L^\frac{n}{k}(B_2^n; \bigwedge^k \R^n)$ with a weak exterior derivative $d\tau \in L^\frac{n}{k}(B_2^n; \bigwedge^{k+1} \R^n)$. For $k=n$, $W^{d,\frac{n}{k}}(B_2^n; \bigwedge^k \R^n)$ denotes $L^1(B_2^n; \bigwedge^n \R^n)$.

Next, we recall the compatibility of the weak exterior derivative and the pull-back of a continuous Sobolev map.

\begin{lemma}{ \emph{(\cite[II.4.7]{RE})} } \label{lem:deriv-of-pullback}
Let $N$ be a smooth manifold and let $F\in W_\loc^{1,n}(B_2^n,N)$ be a continuous map. Then $dF^* \alpha = F^* d\alpha$ for every $\alpha \in \Omega^k(N)$ and $k=0,\ldots,n-1$.
\end{lemma}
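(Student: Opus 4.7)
The plan is to establish the identity by smooth approximation, using that for smooth maps $dF^*\alpha = F^* d\alpha$ holds classically by the naturality of the exterior derivative. Since the statement is local on $N$, I would first use a partition of unity subordinate to a finite atlas covering $F(K)$ for each compact $K \subset B_2^n$, reducing to the case where $\alpha$ is supported in a single coordinate chart $U \subset N$. Writing $\alpha = \sum_I \alpha_I(y)\, dy^{i_1}\wedge\cdots\wedge dy^{i_k}$ in coordinates, the coefficients $\alpha_I$ and all derivatives $\partial_j \alpha_I$ are bounded on the compact set $\overline{F(K)}$.

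Next, I would approximate $F$ by smooth maps $F_j \colon B_2^n \to N$ with $F_j \to F$ locally uniformly and $DF_j \to DF$ in $L^n_\loc$. A standard construction is to isometrically embed $N$ into some $\R^m$ (Nash), mollify $F$ in the ambient space, and then apply the nearest-point projection of a tubular neighborhood of $N$ onto $N$; by uniform continuity of $F$ on compact sets, the mollifications stay in such a neighborhood for large $j$. For each smooth $F_j$, the identity $dF_j^*\alpha = F_j^* d\alpha$ holds pointwise.

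The key analytic step is convergence of the pullbacks in appropriate $L^p$ spaces. In local coordinates, $F^*\alpha$ is a sum of terms $(\alpha_I \circ F)\, dF^{i_1}\wedge\cdots\wedge dF^{i_k}$, each a bounded factor times a product of $k$ partial derivatives in $L^n_\loc$, hence in $L^{n/k}_\loc$ by Hölder's inequality. Analogously, $F^* d\alpha \in L^{n/(k+1)}_\loc$ (interpreted as $L^1_\loc$ when $k = n-1$). The convergences
\[
F_j^*\alpha \to F^*\alpha \text{ in } L^{n/k}_\loc(B_2^n; {\textstyle\bigwedge}^k \R^n), \qquad F_j^* d\alpha \to F^* d\alpha \text{ in } L^{n/(k+1)}_\loc(B_2^n; {\textstyle\bigwedge}^{k+1} \R^n),
\]
follow by a telescoping decomposition on the wedge products combined with Hölder's inequality, using the locally uniform convergence $\alpha_I \circ F_j \to \alpha_I \circ F$ together with the $L^n_\loc$ convergence of the derivatives.

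Finally, for every test form $\varphi \in \Omega_0^{n-k-1}(B_2^n)$, the classical Stokes identity for smooth $F_j$ gives
\[
\int_{B_2^n} F_j^*\alpha \wedge d\varphi = (-1)^{k+1}\int_{B_2^n} F_j^* d\alpha \wedge \varphi,
\]
and passing $j \to \infty$ using the $L^{n/k}_\loc$ and $L^{n/(k+1)}_\loc$ convergences together with the boundedness of $\varphi$ and $d\varphi$ yields the same identity with $F$ in place of $F_j$. By the definition of the weak exterior derivative, this is precisely $dF^*\alpha = F^* d\alpha$. The main obstacle is the $L^{n/k}_\loc$ convergence of the pullbacks, since for $k\ge 2$ one must control a $k$-fold wedge product of functions with only $L^n$ integrability; but the locally uniform boundedness of $\alpha_I \circ F_j$ and the multilinear telescoping/Hölder argument handle this cleanly.
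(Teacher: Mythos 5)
The paper does not prove this lemma itself; it cites Reshetnyak \cite[II.4.7]{RE} and relies on the classical result. Your proposed argument is a correct and standard proof of that classical fact, and it follows the usual route: reduce to a chart via a partition of unity, approximate the Sobolev map by smooth maps (Whitney/Nash embedding, ambient mollification, nearest-point retraction onto a tubular neighborhood — here using that $F(K)$ is compact for compact $K\subset B_2^n$ since $F$ is continuous), then pass to the limit in the integration-by-parts identity. The key analytic point you correctly isolate is the $L^{n/k}_{\loc}$-convergence of the pullbacks, which your telescoping-plus-Hölder decomposition handles: one factor converges in $L^n_{\loc}$ while the remaining $k-1$ factors are bounded in $L^n_{\loc}$, and the coefficient $\alpha_I\circ F_j$ converges locally uniformly. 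The sign in the Stokes identity also matches the paper's definition of the weak exterior derivative. In short, the proposal is correct and in the same spirit as the cited reference; no gaps.
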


We also have an elementary estimate for the the pull-back of a continuous Sobolev map. Namely, if $F\in W_\loc^{1,n}(B_2^n,N)$ is a continuous map into a smooth manifold $N$, then
\begin{equation} \label{eq:pull-back}
\norm{F^* \alpha} \le (\norm{\alpha} \circ F) \norm{DF}^k \text{ a.e.~in } B_2^n
\end{equation}
for $\alpha \in \Omega^k(N)$ and $k=0,\ldots,\dim N$.

Lemma~\ref{lem:deriv-of-pullback} and the pointwise estimate \eqref{eq:pull-back} yield as an immediate consequence the following.

\begin{corollary}
Let $N$ be a closed smooth manifold and let $F\in W^{1,n}(B_2^n,N)$ be a continuous map. Then $F^*$ defines a graded linear map $F^* \colon \Omega^*(N) \to \oplus_{k=0}^n L^\frac{n}{k}(B_2^n; {\bigwedge}^k \R^n)$. In particular, $F^*|_{\mathcal{Z}^k(N)} \colon \mathcal{Z}^k(N) \to W^{d,\frac{n}{k}}(B_2^n; \bigwedge^k \R^n)$ is well-defined for $k=0,\ldots,n$.
\end{corollary}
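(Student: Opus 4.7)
The claim is a direct bookkeeping consequence of combining Lemma~\ref{lem:deriv-of-pullback} with the pointwise estimate \eqref{eq:pull-back}; my plan is to verify the two assertions degree by degree, after dispensing with the trivial range.

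First I would observe that for $k>n$ every $k$-form on the $n$-dimensional domain $B_2^n$ vanishes identically, so $F^*\alpha=0$ for $\alpha\in\Omega^k(N)$ and there is nothing to check. This also shows that $F^*$ respects the grading when the target is $\bigoplus_{k=0}^n L^{n/k}(B_2^n;\bigwedge^k\R^n)$.

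For $1\le k\le n$ and $\alpha\in\Omega^k(N)$, raising \eqref{eq:pull-back} to the power $n/k$ gives
\[
\norm{F^*\alpha}^{\frac{n}{k}} \le (\norm{\alpha}\circ F)^{\frac{n}{k}}\norm{DF}^n \quad \text{a.e.~in }B_2^n.
\]
Since $N$ is closed and $\alpha$ is smooth, the continuous function $\norm{\alpha}\colon N\to\R$ attains a finite maximum $C_\alpha:=\sup_N\norm{\alpha}$, so the factor $(\norm{\alpha}\circ F)^{n/k}$ is bounded by $C_\alpha^{n/k}$. Since $F\in W^{1,n}(B_2^n,N)$ we have $\norm{DF}^n\in L^1(B_2^n)$, and integrating the displayed inequality yields $F^*\alpha\in L^{\frac{n}{k}}(B_2^n;\bigwedge^k\R^n)$. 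For $k=0$, $F^*\alpha=\alpha\circ F$ is continuous on $B_2^n$ and bounded by $\sup_N|\alpha|$, so it belongs to every $L^p(B_2^n)$. Linearity of $F^*$ is immediate, proving the first assertion.

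For the second assertion, fix $\alpha\in\mathcal{Z}^k(N)$; closedness gives $d\alpha=0$. Lemma~\ref{lem:deriv-of-pullback} then yields
\[
d(F^*\alpha) = F^*(d\alpha) = 0 \in L^{\frac{n}{k}}(B_2^n;{\bigwedge}^{k+1}\R^n),
\]
which trivially meets the required integrability. Combined with the $L^{n/k}$-bound on $F^*\alpha$ itself established above, this places $F^*\alpha$ in $W^{d,\frac{n}{k}}(B_2^n;\bigwedge^k\R^n)$, as desired. There is no genuine obstacle; the only points requiring care are the uniform control of $\norm{\alpha}$ that comes from the compactness of $N$ and the separate treatment of $k=0$, both of which are handled above.
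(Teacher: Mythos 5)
Your proof is correct and is simply the explicit unpacking of what the paper labels an ``immediate consequence'' of Lemma~\ref{lem:deriv-of-pullback} and the estimate \eqref{eq:pull-back}: you raise the pointwise bound to the power $n/k$, use compactness of $N$ to control $\norm{\alpha}$ and $F\in W^{1,n}$ to control $\norm{DF}^n$, and then apply the commutation of $d$ with $F^*$ to handle closed forms. This is exactly the intended argument.
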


\subsection*{Estimates for normalized pull-backs}

Next, we define the normalized pull-back of a differential form under a quasiregular curve.

Let $N$ be a closed, connected, and oriented Riemannian manifold and let $\omega \in \Omega^n(N)$ be a closed non-exact form with $2\le n\le \dim N$. Let $K,D\ge 1$. For each $F\in \F$, let $F^! \colon \Omega^*(N) \to \bigoplus_{k=0}^n L^\frac{n}{k}(B_2^n; {\bigwedge}^k \R^n)$ be the linear map
\[
F^! \alpha = A_\omega(F)^{-\frac{k}{n}} F^* \alpha \text{ for } \alpha \in \Omega^k(N) \text{ and } k=0,\ldots,n;
\]
recall that
\[
A_\omega(F) = \int_{B^n} F^* \omega.
\]
We note that $F^!$ commutes with the exterior product and that $F^! \circ h = F^\#$.

The following elementary result gives a bound for the operator norm of $F^!$. The proof is similar to the proof of \cite[Lemma~3.1]{Heikkila-Pankka}, so we omit the details.

\begin{lemma} \label{lem:normalized-bounded}
Let $N$ be a closed, connected, and oriented Riemannian manifold and let $\omega \in \Omega^n(N)$ be a closed non-exact form with $2\le n\le \dim N$. Let $K,D\ge 1$. Let $F\in \F$. Then
\[
\norm{F^! \alpha}_{\frac{n}{k},B_2^n} \le KD^2\norm{\alpha}_\infty
\]
for every $\alpha \in \Omega^k(N)$ and $k=0,\ldots,n$.
\end{lemma}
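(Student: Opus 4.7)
The plan is to combine three ingredients: the pointwise estimate \eqref{eq:pull-back}, the quasiregularity inequality, and the two conditions defining $\F$.

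First, I would dispose of the case $k=0$ separately. Here $F^!\alpha = \alpha \circ F$ is a bounded measurable function with $\norm{\alpha\circ F}_\infty \le \norm{\alpha}_\infty \le KD^2\norm{\alpha}_\infty$, which immediately gives the claim.

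For $1\le k\le n$, the pointwise estimate \eqref{eq:pull-back} yields $\norm{F^*\alpha} \le \norm{\alpha}_\infty \norm{DF}^k$ a.e.\ in $B_2^n$. Raising this to the $n/k$ power (interpreting $n/n=1$ when $k=n$) and integrating gives
\[
\int_{B_2^n} \norm{F^*\alpha}^{n/k} \le \norm{\alpha}_\infty^{n/k} \int_{B_2^n} \norm{DF}^n.
\]
So the proof reduces to bounding $\int_{B_2^n}\norm{DF}^n$ by a constant multiple of $A_\omega(F)$.

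For this bound, I would invoke the $K$-quasiregularity of $F$ together with the standing lower bound $\inf_{B_2^n}(\norm{\omega}\circ F)\ge D^{-1}$ built into the definition of $\F$, which gives
\[
\norm{DF}^n \le \frac{K}{\norm{\omega}\circ F}(\star F^*\omega) \le KD(\star F^*\omega) \quad \text{a.e.\ in } B_2^n.
\]
Integrating and then applying the doubling condition $\int_{B_2^n}F^*\omega \le D\int_{B^n}F^*\omega = DA_\omega(F)$ yields
\[
\int_{B_2^n}\norm{DF}^n \le KD\int_{B_2^n} F^*\omega \le KD^2 A_\omega(F).
\]
Substituting this back and multiplying by the normalization factor $A_\omega(F)^{-k/n}$ produces
\[
\norm{F^!\alpha}_{n/k,B_2^n} \le \norm{\alpha}_\infty (KD^2)^{k/n} \le KD^2\norm{\alpha}_\infty,
\]
since $K,D\ge 1$ and $k/n\le 1$. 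There is no real obstacle here; the only point requiring a bit of care is the bookkeeping with the exponent $n/k$ and ensuring that the case $k=n$ (with the $L^1$-norm $\int|F^*\alpha|$, which includes a possible sign of the $n$-form) and the case $k=0$ (with the $L^\infty$-norm) are handled consistently with the general case.
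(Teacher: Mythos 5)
Your argument is correct and is precisely the standard chain of estimates the paper intends (and which the cited \cite[Lemma~3.1]{Heikkila-Pankka} carries out): pointwise comass bound \eqref{eq:pull-back}, the quasiregularity inequality together with the lower bound $\inf_{B_2^n}(\norm{\omega}\circ F)\ge D^{-1}$ to control $\int_{B_2^n}\norm{DF}^n$ by $KD\int_{B_2^n}F^*\omega$, and then the doubling condition to bring in $A_\omega(F)$. The bookkeeping at the endpoints $k=0$ and $k=n$ and the final reduction $(KD^2)^{k/n}\le KD^2$ are all handled correctly.
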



Lemma~\ref{lem:normalized-bounded} implies that exact forms vanish in a weak sense under limits of operators $F_j^!$ for sequences $(F_j)$ in $\F$ satisfying $A_\omega(F_j)\to \infty$. Since the proof is similar to the proof of \cite[Lemma~3.2]{Heikkila-Pankka}, we omit the details.

\begin{lemma} \label{lem:limit-of-exact}
Let $N$ be a closed, connected, and oriented Riemannian manifold and let $\omega \in \Omega^n(N)$ be a closed non-exact form with $2\le n\le \dim N$. Let $K,D\ge 1$. Let $(F_j)$ be a sequence in $\F$ satisfying $A_\omega(F_j)\to \infty$. Then
\[
\lim_{j\to \infty} \int_{B_2^n} \varphi \wedge F_j^! d\alpha = 0
\]
for $\alpha \in \Omega^k(N)$, $\varphi \in \Omega_0^{n-k-1}(B_2^n)$, and $k=0,\ldots,n-1$.
\end{lemma}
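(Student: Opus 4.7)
The plan is to reduce the claim to the chain rule for weak exterior derivatives from Lemma~\ref{lem:deriv-of-pullback} combined with integration by parts, and then exploit the exponent $-(k+1)/n$ appearing in the normalization to show that the integral decays.

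First, I would rewrite the integrand using the definition of $F_j^!$ and Lemma~\ref{lem:deriv-of-pullback} applied to the $k$-form $\alpha$:
\[
F_j^! d\alpha \;=\; A_\omega(F_j)^{-\frac{k+1}{n}} F_j^* d\alpha \;=\; A_\omega(F_j)^{-\frac{k+1}{n}} d(F_j^* \alpha).
\]
Then, invoking the definition of the weak exterior derivative with $\tau = F_j^* \alpha$ and test form $\varphi \in \Omega_0^{n-k-1}(B_2^n)$, together with the graded commutativity of the wedge product, I would shift the derivative onto $\varphi$ to obtain
\[
\int_{B_2^n} \varphi \wedge F_j^! d\alpha \;=\; \pm\, A_\omega(F_j)^{-\frac{k+1}{n}} \int_{B_2^n} F_j^* \alpha \wedge d\varphi,
\]
up to a sign depending on $n$ and $k$ which will be irrelevant for the limit.

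Next, I would apply H\"older's inequality with dual exponents $\frac{n}{k}$ and $\frac{n}{n-k}$ (interpreted as $\infty$ and $1$ when $k=0$), using that $d\varphi$ is smooth and compactly supported in $B_2^n$ and therefore lies in every $L^p$. This yields
\[
\Bigl| \int_{B_2^n} F_j^* \alpha \wedge d\varphi \Bigr| \;\le\; \norm{F_j^* \alpha}_{\frac{n}{k},B_2^n} \norm{d\varphi}_{\frac{n}{n-k},B_2^n}.
\]
Combining this with the identity $F_j^* \alpha = A_\omega(F_j)^{k/n} F_j^!\alpha$ and the bound from Lemma~\ref{lem:normalized-bounded}, the first factor is controlled by $KD^2 \norm{\alpha}_\infty\, A_\omega(F_j)^{k/n}$, and consequently
\[
\Bigl| \int_{B_2^n} \varphi \wedge F_j^! d\alpha \Bigr| \;\le\; KD^2 \norm{\alpha}_\infty \norm{d\varphi}_{\frac{n}{n-k},B_2^n}\, A_\omega(F_j)^{-\frac{1}{n}}.
\]
Since by assumption $A_\omega(F_j) \to \infty$, the right-hand side tends to $0$ as $j\to\infty$, giving the conclusion. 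No real obstacle is expected; the argument essentially mirrors the proof of \cite[Lemma~3.2]{Heikkila-Pankka}, and the only minor bookkeeping is the sign in the integration by parts, which cancels out of the modulus in the final estimate.
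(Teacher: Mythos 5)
Your proof is correct and follows the same route the paper intends: the paper omits the argument, pointing to \cite[Lemma~3.2]{Heikkila-Pankka}, and your chain---rewrite $F_j^! d\alpha$ via the normalization and Lemma~\ref{lem:deriv-of-pullback}, integrate by parts onto $\varphi$ using the definition of the weak exterior derivative, apply H\"older with exponents $\tfrac{n}{k}$ and $\tfrac{n}{n-k}$, bound $\norm{F_j^*\alpha}_{n/k}$ via $\norm{F_j^!\alpha}_{n/k} \le KD^2\norm{\alpha}_\infty$ from Lemma~\ref{lem:normalized-bounded}, and read off the surviving factor $A_\omega(F_j)^{-1/n}\to 0$---is precisely that standard argument. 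The only cosmetic omission is a dimensional constant $C(n,k)$ in the pointwise estimate $|F_j^*\alpha \wedge d\varphi| \le C(n,k)\,|F_j^*\alpha|\,|d\varphi|$ before H\"older, which of course does not affect the limit.
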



\section{Sobolev--Poincaré limits and their properties} \label{sec:Sobev--Poincaré}

In this section, our end goal is to prove Theorem~\ref{thm:limit-extension}. The proof is based on Sobolev--Poincaré limits.

\subsection*{Motivation and definition}

We begin by recalling the Poincaré operator of Iwaniec and Lutoborski \cite{IL}: \emph{There exists a graded compact linear operator
\[
T\colon \bigoplus_{k=1}^{n-1} L^\frac{n}{k}(B_2^n; {\bigwedge}^k \R^n) \to \bigoplus_{k=1}^{n-1} L^\frac{n}{k}(B_2^n; {\bigwedge}^{k-1} \R^n)
\]
satisfying $\id=dT+Td$. In particular, $dT\omega=\omega$ for every weakly closed form $\omega \in \oplus_{k=1}^{n-1} L^\frac{n}{k}(B_2^n; \bigwedge^k \R^n)$.
}

Next, we show that, for a sequence $(F_j)$ in $\F$, a weak limit of $(F_j^\#)$ is the weak exterior derivative of a limit of $(TF_{j_i}^\#)$ for some subsequence. Before stating a precise formulation, we recall for the reader's convenience that, for $F\in \F$,
\[
F^\# \colon H_\dR^*(N) \to \cW^*(B_2^n), \; H_\dR^k(N) \ni c \mapsto A_\omega(F)^{-\frac{k}{n}} F^*(h(c)) = F^!(h(c)),
\]
where $h\colon H_\dR^*(N) \to \Omega^*(N)$ is the choice of harmonic representative. The proof is similar to the proof of \cite[Lemma~4.1]{Heikkila-Pankka}, so we omit the details.

\begin{lemma} \label{lem:Sob-Poinc}
Let $N$ be a closed, connected, and oriented Riemannian manifold and let $\omega \in \Omega^n(N)$ be a closed non-exact form with $2\le n\le \dim N$. Let $K,D\ge 1$. Let $(F_j)$ be a sequence in $\F$ for which $F_j^\# \weakto L$, where
\[
F_j^\#,L \colon \bigoplus_{k=1}^{n-1} H_\dR^k(N) \to \bigoplus_{k=1}^{n-1} L^\frac{n}{k}(B_2^n; {\bigwedge}^k \R^n).
\]
Then there exists a subsequence $(F_{j_i})$ and a graded linear operator
\[
\widehat{L} \colon \bigoplus_{k=1}^{n-1} H_\dR^k(N) \to \bigoplus_{k=1}^{n-1} W^{d,\frac{n}{k}}(B_2^n; {\bigwedge}^{k-1} \R^n) 
\]
satisfying $TF_{j_i}^\# \to \widehat{L}$. In particular, $d\widehat{L}=L$.
\end{lemma}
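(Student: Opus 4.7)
The plan is to exploit two facts from the preceding material: the compactness of the Iwaniec--Lutoborski Poincaré operator $T$ and the commutation identity $\id = dT + Td$. The decisive observation is that harmonic representatives are in particular closed, so that for every $c \in H_\dR^k(N)$ we have $d h(c) = 0$ and hence, by Lemma~\ref{lem:deriv-of-pullback},
\[
d F_j^\# c = A_\omega(F_j)^{-\frac{k}{n}} F_j^*(d h(c)) = 0.
\]
Combined with $\id = dT + Td$, this yields the key identity $F_j^\# c = d(T F_j^\# c)$, which will let me transfer the weak convergence of $F_j^\# c$ to a statement about the weak exterior derivative of the limit of $T F_j^\# c$.

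The first step is to fix a (finite) basis $c_1,\ldots,c_N$ of the finite-dimensional graded vector space $\bigoplus_{k=1}^{n-1} H_\dR^k(N)$; here I use that $H_\dR^*(N)$ is finite-dimensional because $N$ is closed. For each basis element $c_\alpha \in H_\dR^{k_\alpha}(N)$, Lemma~\ref{lem:normalized-bounded} shows that $(F_j^\# c_\alpha)_j$ is bounded in $L^{n/k_\alpha}(B_2^n; \bigwedge^{k_\alpha} \R^n)$, and by hypothesis this sequence converges weakly to $L c_\alpha$. Since $T$ is a compact linear operator from $L^{n/k_\alpha}$ into $L^{n/k_\alpha}(B_2^n; \bigwedge^{k_\alpha - 1} \R^n)$, the sequence $(T F_j^\# c_\alpha)_j$ admits a strongly convergent subsequence. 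A standard diagonal extraction across $\alpha = 1,\ldots,N$ produces a single subsequence $(F_{j_i})$ along which $T F_{j_i}^\# c_\alpha$ converges strongly in $L^{n/k_\alpha}$ for every $\alpha$; I define $\widehat{L} c_\alpha$ to be this strong limit, and extend $\widehat{L}$ graded-linearly.

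The second step is to verify that $\widehat{L}$ lands in the Sobolev space $\bigoplus W^{d,n/k}(B_2^n; \bigwedge^{k-1} \R^n)$ with $d\widehat{L} = L$. Fix $c = c_\alpha$ and write $\tau_i = T F_{j_i}^\# c \to \widehat{L} c$ strongly in $L^{n/k}$; by the key identity, $d \tau_i = F_{j_i}^\# c \weakto L c$ in $L^{n/k}$. Testing with an arbitrary $\varphi \in \Omega_0^{n-k}(B_2^n)$,
\[
\int_{B_2^n} \widehat{L}c \wedge d\varphi = \lim_{i\to\infty} \int_{B_2^n} \tau_i \wedge d\varphi = (-1)^k \lim_{i\to\infty} \int_{B_2^n} d\tau_i \wedge \varphi = (-1)^k \int_{B_2^n} L c \wedge \varphi,
\]
where the first equality is by strong (hence also weak) $L^{n/k}$-convergence and the last is by weak convergence. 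This identifies $L c$ as the weak exterior derivative of $\widehat{L} c$, so $\widehat{L} c \in W^{d,n/k}$ and $d\widehat{L} = L$, as claimed.

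I do not expect a serious obstacle here: the compactness of $T$ and the homotopy identity $\id = dT + Td$ are borrowed wholesale from Iwaniec--Lutoborski, harmonic representatives being closed is elementary, and the passage from weak to strong limits via a compact operator together with the finite-dimensionality of $H_\dR^*(N)$ makes the diagonal extraction routine. The only point requiring mild care is the bookkeeping of degrees (noting that $T$ decreases degree by one, so $\widehat{L}$ is indeed a graded operator into $(k-1)$-forms) and checking that the argument on the fixed basis produces a well-defined graded linear operator on the whole space.
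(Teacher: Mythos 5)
Your proof is correct and follows the standard approach for this kind of result, which is presumably also what the omitted proof in the paper (and the cited Lemma~4.1 of Heikkilä--Pankka) does: use closedness of harmonic forms to get $dF_j^\# c = 0$, hence $F_j^\# c = dTF_j^\# c$ by the homotopy identity; use compactness of the Iwaniec--Lutoborski operator $T$ together with finite-dimensionality of $H_\dR^*(N)$ to extract a subsequence with strong limits $\widehat{L}c$ on a fixed basis; and then identify $L c$ as the weak exterior derivative of $\widehat{L}c$ by testing against $\Omega_0^{n-k}(B_2^n)$. The sign bookkeeping in the final integration-by-parts computation is consistent with the paper's sign convention for the weak exterior derivative of a $(k-1)$-form, and the diagonal extraction is legitimate since the basis is finite.
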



We are now ready to give the definition of a Sobolev--Poincaré limit. Let $N$ be a closed, connected, and oriented Riemannian manifold and let $\omega \in \Omega^n(N)$ be a closed non-exact form with $2\le n\le \dim N$. Let $K,D\ge 1$. Let $(F_j)$ be a sequence in $\F$. We say that a graded linear operator
\[
\widehat{L} \colon \bigoplus_{k=1}^{n-1} H_\dR^k(N) \to \bigoplus_{k=1}^{n-1} W^{d,\frac{n}{k}}(B_2^n; {\bigwedge}^{k-1} \R^n)
\]
is a \emph{Sobolev--Poincaré limit of $(F_j^\#)$} if $d\widehat{L}$ is the weak limit of the operators $F_j^\#$ and $\widehat{L}$ is the limit of the operators $TF_j^\#$.

\subsection*{Vague limits of products}

The weak exterior derivative of a Sobolev--Poincaré limit for a sequence whose normalizations grow without bound commutes with the exterior product. We begin by showing the commutativity in a weak sense without assumptions on the growth of the normalizations; see also \cite[Proposition~4.2]{Heikkila-Pankka} and \cite[Lemma~4.2]{PR}. For a precise statement, we recall that a sequence $(\mu_j)$ of Radon measures on $B_2^n$ \emph{converges vaguely to a Radon measure $\mu$ on $B_2^n$} if
\[
\lim_{j\to \infty} \int_{B_2^n} \varphi \, \mathrm{d}\mu_j = \int_{B_2^n} \varphi \, \mathrm{d}\mu
\]
for every $\varphi \in C_0(B_2^n)$.

\begin{proposition} \label{prop:weak-commutativity}
Let $N$ be a closed, connected, and oriented Riemannian manifold and let $\omega \in \Omega^n(N)$ be a closed non-exact form with $2\le n\le \dim N$. Let $K,D\ge 1$. Let $(F_j)$ be a sequence in $\F$ and suppose that the sequence $(F_j^\#)$ has a Sobolev--Poincaré limit $\widehat{L} \colon \oplus_{k=1}^{n-1} H_\dR^k(N) \to \oplus_{k=1}^{n-1} W^{d,\frac{n}{k}}(B_2^n; {\bigwedge}^{k-1} \R^n)$. Let $k_1,k_2\in \{1,\ldots,n-1\}$ for which $k:=k_1+k_2\le n$. Then, for every $c_i \in H_\dR^{k_i}(N)$, we have
\[
F_j^\# c_1 \wedge F_j^\# c_2 \weakto d\widehat{L}(c_1) \wedge d\widehat{L}(c_2).
\]
Here, the weak convergence is the usual weak convergence in $L^\frac{n}{k}(B_2^n; \bigwedge^{k} \R^n)$ if $k<n$ and the vague convergence of measures if $k=n$.
\end{proposition}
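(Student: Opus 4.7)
The plan is to reduce the statement to a strong--weak pairing after an integration by parts. First observe that each factor $F_j^\# c_i = A_\omega(F_j)^{-k_i/n} F_j^* h(c_i)$ is weakly closed, since $h(c_i)$ is a harmonic, hence closed, representative and the weak exterior derivative commutes with the pull-back under a continuous Sobolev map by Lemma~\ref{lem:deriv-of-pullback}. Applying the Iwaniec--Lutoborski identity $\id = dT + Td$ to the weakly closed form $F_j^\# c_1$ gives $F_j^\# c_1 = d(TF_j^\# c_1)$, and a Leibniz-type computation combined with the weak closedness of $F_j^\# c_2$ yields
\[
F_j^\# c_1 \wedge F_j^\# c_2 \;=\; d\bigl( TF_j^\# c_1 \wedge F_j^\# c_2 \bigr)
\]
in the distributional sense.

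Next I would test this identity against a test object. For $k<n$, let $\eta \in \Omega_0^{n-k}(B_2^n)$; for $k=n$, let $\eta = \psi \in C_c^\infty(B_2^n)$. Integration by parts (moving $d$ onto $\eta$) gives
\[
\int_{B_2^n} \eta \wedge F_j^\# c_1 \wedge F_j^\# c_2 \;=\; \pm \int_{B_2^n} d\eta \wedge (TF_j^\# c_1) \wedge F_j^\# c_2.
\]
By the definition of a Sobolev--Poincaré limit, $TF_j^\# c_1 \to \widehat{L}(c_1)$ strongly in $L^{n/k_1}(B_2^n; \bigwedge^{k_1-1}\R^n)$; by hypothesis, $F_j^\# c_2 \weakto d\widehat{L}(c_2)$ weakly in $L^{n/k_2}(B_2^n; \bigwedge^{k_2}\R^n)$. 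Since $d\eta$ is bounded and $k_1/n + k_2/n = k/n \le 1$, Hölder's inequality on the bounded domain $B_2^n$ reduces the problem to the classical strong--weak pairing, yielding
\[
\int_{B_2^n} d\eta \wedge (TF_j^\# c_1) \wedge F_j^\# c_2 \;\longrightarrow\; \int_{B_2^n} d\eta \wedge \widehat{L}(c_1) \wedge d\widehat{L}(c_2).
\]

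To identify the limit with $\int_{B_2^n} \eta \wedge d\widehat{L}(c_1) \wedge d\widehat{L}(c_2)$, I would reverse the integration by parts: since $d\widehat{L}(c_2)$ is a weak limit of the weakly closed forms $F_j^\# c_2$, it is itself weakly closed, so another Leibniz-type computation gives $d(\widehat{L}(c_1) \wedge d\widehat{L}(c_2)) = d\widehat{L}(c_1) \wedge d\widehat{L}(c_2)$ distributionally, matching the limit above with the desired integral. Combined with the uniform $L^{n/k}$-bound on the products $F_j^\# c_1 \wedge F_j^\# c_2$ furnished by Lemma~\ref{lem:normalized-bounded} and the density of smooth compactly supported test objects, this establishes weak $L^{n/k}$ convergence for $k<n$ and vague convergence of measures for $k=n$.

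The main obstacle will be the strong--weak pairing step in the case $k<n$: here the exponents $n/k_1$ and $n/k_2$ are not Hölder conjugate, and one must exploit the boundedness of $d\eta$ together with the inclusion $L^{n/k_1} \subset L^{n/(n-k_2)}$ on the bounded domain $B_2^n$ (which holds precisely because $k \le n$) to reduce to the classical strong--weak pairing. When $k=n$ the exponents are exactly conjugate and the passage to the limit is immediate.
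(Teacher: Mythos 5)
Your proof is correct and relies on the same engine as the paper's argument: the uniform bound from Lemma~\ref{lem:normalized-bounded} plus density to reduce to testing against $\varphi\in\Omega_0^{n-k}(B_2^n)$ (or $C_0^\infty$ when $k=n$), and the identity $\id=dT+Td$ applied to a weakly closed factor to trade weak convergence of $F_j^\#$ for strong convergence of $TF_j^\#$, closed off by H\"older on the bounded domain using $k_1+k_2\le n$. Where you differ from the paper is in the bookkeeping: you integrate by parts at the outset, writing $F_j^\#c_1\wedge F_j^\#c_2 = d(TF_j^\#c_1\wedge F_j^\#c_2)$ and moving $d$ onto $\eta$, pair the strongly convergent factor $TF_j^\#c_1$ against the weakly convergent factor $F_j^\#c_2$, and then reverse the integration by parts to identify the limit. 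The paper instead telescopes
\[
\int\varphi\wedge F_j^\#c_1\wedge F_j^\#c_2 - \int\varphi\wedge d\widehat{L}c_1\wedge d\widehat{L}c_2
\]
into two differences, handles the first by weak convergence of $F_j^\#c_1$ tested against the fixed $L^{n/(n-k_1)}$ form $\varphi\wedge d\widehat{L}c_2$, and integrates by parts only on the second, via $d\widehat{L}c_2-F_j^\#c_2 = d(\widehat{L}c_2-TF_j^\#c_2)$. The paper's ordering spares you the reverse integration by parts: it never needs the Leibniz identity $d(\widehat{L}c_1\wedge d\widehat{L}c_2)=d\widehat{L}c_1\wedge d\widehat{L}c_2$ for the limiting Sobolev objects, whereas your route invokes a distributional Leibniz rule both to set up $F_j^\#c_1\wedge F_j^\#c_2 = d(TF_j^\#c_1\wedge F_j^\#c_2)$ and to convert back at the end; both Leibniz steps are legitimate at the integrability levels in play ($W^{d,n/k_1}$ against a weakly closed $L^{n/k_2}$ form with $k_1+k_2\le n$), but they are extra moving parts that the paper's split-first organization avoids. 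Both routes give the same estimate and the same conclusion.
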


\begin{proof}
Let $c_i \in H_\dR^{k_i}(N)$. By Lemma~\ref{lem:normalized-bounded}, the sequence $(F_j^\# c_1 \wedge F_j^\# c_2) = (F_j^! (h(c_1) \wedge h(c_2)))$ is bounded in $L^\frac{n}{k}(B_2^n; \bigwedge^k \R^n)$. Moreover, $\Omega_0^{n-k}(B_2^n)$ is dense in $L^\frac{n}{k}(B_2^n; \bigwedge^k \R^n)$ with respect to the $L^\frac{n}{k}$-norm if $k<n$ and $C_0^\infty(B_2^n)$ is dense in $C_0(B_2^n)$ with respect to the $L^\infty$-norm. Thus, it suffices to prove that
\[
\lim_{j\to \infty} \int_{B_2^n} \varphi \wedge F_j^\# c_1 \wedge F_j^\# c_2 = \int_{B_2^n} \varphi \wedge d\widehat{L}c_1 \wedge d\widehat{L}c_2
\]
for every $\varphi \in \Omega_0^{n-k}(B_2^n)$.

Let $\varphi \in \Omega_0^{n-k}(B_2^n)$. By the triangle inequality, it suffices to estimate
\[
\abs{\int_{B_2^n} \varphi \wedge (d\widehat{L}c_1-F_j^\# c_1) \wedge d\widehat{L}c_2} \text{ and } \abs{\int_{B_2^n} \varphi \wedge F_j^\# c_1 \wedge (d\widehat{L}c_2-F_j^\# c_2)}.
\]
Since
\[
\varphi \wedge d\widehat{L}c_2 \in L^\frac{n}{k_2}(B_2^n; {\bigwedge}^{n-k_1} \R^n) \subset L^\frac{n}{n-k_1}(B_2^n; {\bigwedge}^{n-k_1} \R^n)
\]
and $F_j^\# c_1 \weakto d\widehat{L}c_1$ in $L^\frac{n}{k_1}(B_2^n; \bigwedge^{k_1} \R^n)$, the first term converges to zero.

For the second term, observe that $dTF_j^\# c_2 = F_j^\# c_2$. Thus, we have the estimate
\begin{align*}
\abs{\int_{B_2^n} \varphi \wedge F_j^\# c_1 \wedge (d\widehat{L}c_2-F_j^\# c_2)} &= \abs{\int_{B_2^n} \varphi \wedge F_j^\# c_1 \wedge d(\widehat{L}c_2-TF_j^\# c_2)} \\
&= \abs{\int_{B_2^n} d\varphi \wedge F_j^\# c_1 \wedge (\widehat{L}c_2-TF_j^\# c_2)} \\
&\le C(n) \norm{d\varphi}_\infty \int_{B_2^n} \norm{F_j^\# c_1} \norm{\widehat{L}c_2-TF_j^\# c_2}.
\end{align*}
By Hölder's inequality and Lemma~\ref{lem:normalized-bounded}, we may estimate further by
\begin{align*}
&\int_{B_2^n} \norm{F_j^\# c_1} \norm{\widehat{L}c_2-TF_j^\# c_2} \\
	&\quad \le \norm{F_j^\# c_1}_{\frac{n}{n-k_2},B_2^n} \norm{\widehat{L}c_2 - TF_j^\# c_2}_{\frac{n}{k_2},B_2^n} \\
	&\quad \le C(n,k_1,k_2) \norm{F_j^\# c_1}_{\frac{n}{k_1},B_2^n} \norm{\widehat{L}c_2 - TF_j^\# c_2}_{\frac{n}{k_2},B_2^n} \\
	&\quad \le C(n,k_1,k_2) KD^2 \norm{h(c_1)}_\infty \norm{\widehat{L}c_2 - TF_j^\# c_2}_{\frac{n}{k_2},B_2^n}.
\end{align*}
Since $TF_j^\# c_2 \to \widehat{L}c_2$ in $L^\frac{n}{k_2}(B_2^n; \bigwedge^{k_2} \R^n)$, the claim follows.
\end{proof}

In the next result, we show that the weak commutativity of the weak exterior derivative of a Sobolev--Poincaré limit with the exterior product improves into commutativity, provided that the normalizations grow without bound. The argument in the proof is analogous to the one in the proof of \cite[Lemma~4.3]{Heikkila-Pankka} and we omit the details.

\begin{lemma} \label{lem:commutativity}
Let $N$ be a closed, connected, and oriented Riemannian manifold and let $\omega \in \Omega^n(N)$ be a closed non-exact form with $2\le n\le \dim N$. Let $K,D\ge 1$. Let $(F_j)$ be a sequence in $\F$ for which $A_\omega(F_j)\to \infty$ and suppose that the sequence $(F_j^\#)$ has a Sobolev--Poincaré limit $\widehat{L}$. Then
\[
d\widehat{L}(c_1 \wedge c_2) = d\widehat{L}c_1 \wedge d\widehat{L}c_2
\]
for every $c_1 \in H_\dR^{k_1}(N)$ and $c_2 \in H_\dR^{k_2}(N)$ with $k_1,k_2 \in \{1,\ldots,n-1\}$ satisfying $k_1+k_2<n$.
\end{lemma}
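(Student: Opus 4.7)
The plan is to deduce the identity from its weak version, Proposition~\ref{prop:weak-commutativity}, by isolating the discrepancy between $F_j^\#(c_1 \wedge c_2)$ and $F_j^\# c_1 \wedge F_j^\# c_2$ as the normalized pull-back of an exact form on $N$. Such an exact error appears precisely because the wedge product of two harmonic forms need not be harmonic, and the hypothesis $A_\omega(F_j) \to \infty$ is exactly what is needed to annihilate this error via Lemma~\ref{lem:limit-of-exact}.

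First I would set $k := k_1 + k_2$, so that $2 \le k < n$ by hypothesis. Since both $h(c_1) \wedge h(c_2)$ and $h(c_1 \wedge c_2)$ are closed representatives of the class $c_1 \wedge c_2 \in H_\dR^k(N)$, there is a form $\beta \in \Omega^{k-1}(N)$ satisfying
\[
h(c_1) \wedge h(c_2) - h(c_1 \wedge c_2) = d\beta.
\]
Applying $F_j^*$, using multiplicativity of the pull-back, and combining with the homogeneity $A_\omega(F_j)^{-k/n} = A_\omega(F_j)^{-k_1/n} \cdot A_\omega(F_j)^{-k_2/n}$, I obtain the a.e.\ pointwise identity
\[
F_j^\# c_1 \wedge F_j^\# c_2 = F_j^\#(c_1 \wedge c_2) + F_j^!(d\beta) \quad \text{on } B_2^n.
\]

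Next I would integrate this identity against an arbitrary test form $\varphi \in \Omega_0^{n-k}(B_2^n)$ and let $j \to \infty$. Proposition~\ref{prop:weak-commutativity}, applied with $k = k_1 + k_2 < n$, gives
\[
\int_{B_2^n} \varphi \wedge F_j^\# c_1 \wedge F_j^\# c_2 \to \int_{B_2^n} \varphi \wedge d\widehat{L}c_1 \wedge d\widehat{L}c_2;
\]
the Sobolev--Poincaré assumption (which includes $F_j^\# \weakto d\widehat{L}$ in $L^{n/k}$ on the $k$th layer) gives
\[
\int_{B_2^n} \varphi \wedge F_j^\#(c_1 \wedge c_2) \to \int_{B_2^n} \varphi \wedge d\widehat{L}(c_1 \wedge c_2);
\]
and Lemma~\ref{lem:limit-of-exact} with $\alpha = \beta \in \Omega^{k-1}(N)$ gives $\int_{B_2^n} \varphi \wedge F_j^!(d\beta) \to 0$. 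Passing to the limit in the pointwise identity yields equality of the three limit integrals for every test $\varphi$; the uniform $L^{n/k}$-bounds from Lemma~\ref{lem:normalized-bounded} together with density of $\Omega_0^{n-k}(B_2^n)$ in $L^{n/(n-k)}(B_2^n; \bigwedge^{n-k} \R^n)$ then upgrade this to the a.e.\ pointwise equality $d\widehat{L}(c_1 \wedge c_2) = d\widehat{L}c_1 \wedge d\widehat{L}c_2$ on $B_2^n$.

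The main obstacle is the vanishing of the error term $F_j^!(d\beta)$, which is the only place where the divergence $A_\omega(F_j) \to \infty$ enters the argument. Without this divergence the pull-back $F_j^*(d\beta)$, though bounded, can produce nontrivial weak limits after normalization; the growth of $A_\omega(F_j)$ supplies exactly the decay that Lemma~\ref{lem:limit-of-exact} converts into convergence to zero.
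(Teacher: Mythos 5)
Your proof is correct and is, as far as I can tell, the argument the paper intends (the paper simply defers to \cite[Lemma~4.3]{Heikkila-Pankka}, saying the argument is analogous). The pointwise decomposition $F_j^\# c_1 \wedge F_j^\# c_2 = F_j^\#(c_1\wedge c_2) + F_j^!(d\beta)$ with $d\beta = h(c_1)\wedge h(c_2) - h(c_1\wedge c_2)$, the use of Proposition~\ref{prop:weak-commutativity} for the left side, the weak convergence $F_j^\#(c_1\wedge c_2)\weakto d\widehat{L}(c_1\wedge c_2)$ on the $k$-th layer (valid since $k=k_1+k_2\le n-1$), and Lemma~\ref{lem:limit-of-exact} to kill the normalized exact error are exactly the right ingredients; the concluding density argument in the dual pairing $L^{n/k}\times L^{n/(n-k)}$ is also correct.
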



\subsection*{Extension to $\K$}

In the setting of quasiregular mappings in \cite{Heikkila-Pankka}, weak exterior derivatives of Sobolev--Poincare limits are extended to $H_\dR^{\dim N}(N) = \bSpan_\R ([\vol_N]_\dR)$. Next, we show that, in the setting of this paper, the weak exterior derivative of a Sobolev--Poincare limit has a well-defined extension to $\K$ arising from the algebraic structure of $\K$.

\begin{lemma} \label{lem:extension-to-K}
Let $N$ be a closed, connected, and oriented Riemannian manifold and let $\omega \in \Omega^n(N)$ be a closed non-exact form with $2\le n\le \dim N$. Let $K,D\ge 1$. Let $(F_j)$ be a sequence in $\F$ for which $A_\omega(F_j)\to \infty$ and suppose that the sequence $(F_j^\#)$ has a Sobolev--Poincaré limit $\widehat{L}$. Then the mapping
\[
d\widehat{L}_n \colon \K \to L^1(B_2^n; {\bigwedge}^n \R^n), \quad \sum_{i=1}^m c_i \wedge c_i' \mapsto \sum_{i=1}^m d\widehat{L}c_i \wedge d\widehat{L}c_i',
\]
is a well-defined linear map.
\end{lemma}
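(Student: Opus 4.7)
The plan is to reduce well-definedness of $d\widehat{L}_n$ to a single vanishing statement: if $\sum_{i=1}^m c_i \wedge c_i' = 0$ in $H_\dR^n(N)$, where each $c_i \in H_\dR^{\ell_i}(N)$ and $c_i' \in H_\dR^{n-\ell_i}(N)$ with $\ell_i \in \{1,\ldots,n-1\}$, then
\[
\sum_{i=1}^m d\widehat{L}c_i \wedge d\widehat{L}c_i' = 0 \quad \text{in } L^1(B_2^n; {\bigwedge}^n \R^n).
\]
Granted this, the bilinearity of $(c,c') \mapsto d\widehat{L}c \wedge d\widehat{L}c'$ on each $H_\dR^\ell(N) \times H_\dR^{n-\ell}(N)$, together with the fact that $\K$ is by definition the image of the wedge product map from $\bigoplus_{\ell=1}^{n-1} H_\dR^\ell(N) \otimes H_\dR^{n-\ell}(N)$ into $H_\dR^n(N)$, promotes the assignment $\sum c_i \wedge c_i' \mapsto \sum d\widehat{L}c_i \wedge d\widehat{L}c_i'$ to a well-defined linear map on $\K$. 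Membership in $L^1$ of each summand is immediate from Hölder's inequality combined with Lemma~\ref{lem:normalized-bounded}.

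To prove the vanishing, I would first translate the cohomological hypothesis into an exactness statement on differential forms. Since each $h(c_i) \wedge h(c_i')$ is a closed $n$-form representing $c_i \wedge c_i' \in H_\dR^n(N)$, the assumption $\sum c_i \wedge c_i' = 0$ forces the sum $\sum_{i=1}^m h(c_i) \wedge h(c_i')$ to be exact on $N$; pick $\beta \in \Omega^{n-1}(N)$ with $d\beta = \sum_{i=1}^m h(c_i) \wedge h(c_i')$. Since $F_j^!$ commutes with the exterior product and satisfies $F_j^\# = F_j^! \circ h$, this yields
\[
\sum_{i=1}^m F_j^\# c_i \wedge F_j^\# c_i' \;=\; F_j^!\!\left( \sum_{i=1}^m h(c_i) \wedge h(c_i') \right) \;=\; F_j^!(d\beta).
\]

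To conclude I would combine two earlier ingredients. Proposition~\ref{prop:weak-commutativity}, applied to each summand (noting that $\ell_i + (n-\ell_i) = n$) and then summed, gives the vague convergence
\[
\sum_{i=1}^m F_j^\# c_i \wedge F_j^\# c_i' \;\weakto\; \sum_{i=1}^m d\widehat{L}c_i \wedge d\widehat{L}c_i'
\]
as Radon measures on $B_2^n$. Meanwhile, Lemma~\ref{lem:limit-of-exact} applied with $\alpha = \beta$ and $k = n-1$ yields $\int_{B_2^n} \varphi \wedge F_j^!(d\beta) \to 0$ for every $\varphi \in \Omega_0^0(B_2^n) = C_0^\infty(B_2^n)$. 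Pairing these two statements against an arbitrary $\varphi \in C_0^\infty(B_2^n)$ shows that the vague limit annihilates every smooth compactly supported test function, which by density forces $\sum d\widehat{L}c_i \wedge d\widehat{L}c_i'$ to vanish a.e.\ on $B_2^n$.

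The main step is the identification of the primitive $\beta$, which uses only that $\sum h(c_i) \wedge h(c_i')$ is closed and cohomologous to zero on $N$; the remainder is a bookkeeping exercise combining Proposition~\ref{prop:weak-commutativity} with Lemma~\ref{lem:limit-of-exact}. I do not anticipate any genuine obstacle here, since the algebraic structure of $\K$ was tailored precisely so that this argument can be carried out generator by generator.
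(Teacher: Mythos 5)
Your proposal is correct and takes essentially the same approach as the paper: the paper proves well-definedness directly by comparing two representations $\sum c_i\wedge c_i' = \sum e_i\wedge e_i'$, choosing a primitive $\alpha$ for the difference of their harmonic representatives, and invoking Proposition~\ref{prop:weak-commutativity} together with Lemma~\ref{lem:limit-of-exact}; your reduction to the vanishing case via the universal property of the tensor product is a logically equivalent repackaging of the same argument, using exactly the same two ingredients.
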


\begin{proof}
Let $k_1,\ldots,k_m,\ell_1,\ldots,\ell_\nu \in \{1,\ldots,n-1\}$. Let $c_i \in H_\dR^{k_i}(N)$, $c_i' \in H_\dR^{n-k_i}(N)$, $e_i \in H_\dR^{\ell_i}(N)$, and $e_i' \in H_\dR^{n-\ell_i}(N)$ be de Rham classes satisfying $\sum_{i=1}^m c_i \wedge c_i' = \sum_{i=1}^\nu e_i \wedge e_i'$.

Let $\alpha \in \Omega^{n-1}(N)$ be a form satisfying
\[
\sum_{i=1}^m h(c_i) \wedge h(c_i') = \sum_{i=1}^\nu h(e_i) \wedge h(e_i') + d\alpha
\]
and let $\varphi \in C_0^\infty(B_2^n)$. By Proposition~\ref{prop:weak-commutativity}, we obtain
\begin{align*}
\int_{B_2^n} \varphi \left( \sum_{i=1}^m d\widehat{L}c_i \wedge d\widehat{L}c_i' \right) &= \sum_{i=1}^m \lim_{j\to \infty} \int_{B_2^n} \varphi F_j^\# c_i \wedge F_j^\# c_i' \\
&= \lim_{j\to \infty} \sum_{i=1}^m \int_{B_2^n} \varphi F_j^\# c_i \wedge F_j^\# c_i' \\
&= \lim_{j\to \infty} \int_{B_2^n} \varphi \sum_{i=1}^m F_j^!( h(c_i) \wedge h(c_i')) \\
&= \lim_{j\to \infty} \int_{B_2^n} \varphi F_j^! \left( \sum_{i=1}^m h(c_i) \wedge h(c_i') \right).
\end{align*}
A similar computation yields that
\[
\int_{B_2^n} \varphi \left( \sum_{i=1}^\nu d\widehat{L}e_i \wedge d\widehat{L}e_i' \right) = \lim_{j\to \infty} \int_{B_2^n} \varphi F_j^! \left( \sum_{i=1}^\nu h(e_i) \wedge h(e_i') \right).
\]
Since, by Lemma~\ref{lem:limit-of-exact}, we have
\begin{align*}
&\abs{ \int_{B_2^n} \varphi F_j^! \left( \sum_{i=1}^m h(c_i) \wedge h(c_i') \right) - \int_{B_2^n} \varphi F_j^! \left( \sum_{i=1}^\nu h(e_i) \wedge h(e_i') \right) } \\
	&\quad = \abs{ \int_{B_2^n} \varphi F_j^! d\alpha } \to 0,
\end{align*}
it follows that
\[
\sum_{i=1}^m d\widehat{L}c_i \wedge d\widehat{L}c_i' = \sum_{i=1}^\nu d\widehat{L}e_i \wedge d\widehat{L}e_i'.
\]
Hence, $d\widehat{L}_n$ is well-defined.
\end{proof}

The following proposition states that extending $d\widehat{L}$ with $d\widehat{L}_n \colon \K \to L^1(B_2^n; \bigwedge^n \R^n)$ is compatible with vague convergence of measures on $B_2^n$; see also \cite[Theorem~4.4]{Heikkila-Pankka}.

\begin{proposition} \label{prop:limit-measure}
Let $N$ be a closed, connected, and oriented Riemannian manifold and let $\omega \in \Omega^n(N)$ be a closed non-exact form with $2\le n\le \dim N$. Let $K,D\ge 1$. Let $(F_j)$ be a sequence in $\F$ for which $A_\omega(F_j)\to \infty$ and suppose that the sequence $(F_j^\#)$ has a Sobolev--Poincaré limit $\widehat{L}$. Let $\alpha \in \Omega^n(N)$ be a closed form satisfying $[\alpha]_\dR \in \K$. Then
\[
\int_{B_2^n} \varphi d\widehat{L}_n [\alpha]_\dR = \lim_{j\to \infty} \int_{B_2^n} \varphi F_j^! \alpha
\]
for every $\varphi \in C_0(B_2^n)$. In particular, $F_j^! \alpha \weakto d\widehat{L}_n [\alpha]_\dR$ vaguely as measures and, if each $F_j^! \alpha$ is non-negative, then $d\widehat{L}_n [\alpha]_\dR$ is non-negative.
\end{proposition}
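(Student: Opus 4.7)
The overall plan is to decompose $[\alpha]_\dR$ using the definition of $\K$, pass to the limit on each product factor via Proposition~\ref{prop:weak-commutativity}, and absorb the resulting exact error term via Lemma~\ref{lem:limit-of-exact}. Concretely, write $[\alpha]_\dR = \sum_{i=1}^m c_i \wedge c_i'$ with $c_i \in H_\dR^{k_i}(N)$ and $c_i' \in H_\dR^{n-k_i}(N)$ for some $k_i \in \{1,\ldots,n-1\}$. Since $\alpha$ and $\sum_{i} h(c_i)\wedge h(c_i')$ are closed $n$-forms representing the same de Rham class, there exists $\beta \in \Omega^{n-1}(N)$ with
\[
\alpha = \sum_{i=1}^m h(c_i)\wedge h(c_i') + d\beta.
\]

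First I would test against $\varphi \in C_0^\infty(B_2^n)$. Using that $F_j^!$ commutes with wedge products and that $F_j^\# = F_j^! \circ h$, we may split
\[
\int_{B_2^n} \varphi F_j^! \alpha = \sum_{i=1}^m \int_{B_2^n} \varphi \, F_j^\# c_i \wedge F_j^\# c_i' + \int_{B_2^n} \varphi \, F_j^! d\beta.
\]
By Lemma~\ref{lem:limit-of-exact} applied with $k=n-1$ (so $\varphi \in \Omega_0^0(B_2^n)=C_0^\infty(B_2^n)$), the second summand tends to zero. By Proposition~\ref{prop:weak-commutativity} in the top-degree (vague) case $k_i + (n-k_i) = n$, each integral in the sum converges to $\int_{B_2^n} \varphi \, d\widehat{L}c_i \wedge d\widehat{L}c_i'$, whose total is $\int_{B_2^n} \varphi \, d\widehat{L}_n[\alpha]_\dR$ by the definition of $d\widehat{L}_n$, which is unambiguous by Lemma~\ref{lem:extension-to-K}.

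To upgrade this from $\varphi \in C_0^\infty(B_2^n)$ to $\varphi \in C_0(B_2^n)$, I would use that $\|F_j^!\alpha\|_{1,B_2^n}\le KD^2 \|\alpha\|_\infty$ from Lemma~\ref{lem:normalized-bounded}, which makes the signed measures $F_j^!\alpha$ uniformly bounded in total variation. Combined with the Fatou-type bound $\|d\widehat{L}_n[\alpha]_\dR\|_{1,B_2^n} \le KD^2 \|\alpha\|_\infty$ for the limit density, a standard $\varepsilon/3$-approximation of $\varphi \in C_0(B_2^n)$ by smooth test functions in $L^\infty$-norm finishes the vague convergence. The non-negativity claim is then immediate: the vague limit of a sequence of non-negative Radon measures on $B_2^n$ is itself a non-negative Radon measure, so if each $F_j^!\alpha \ge 0$ as a top-degree form, then the $L^1$-density $d\widehat{L}_n[\alpha]_\dR$ must be non-negative a.e.

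The only subtle point is the well-definedness of $d\widehat{L}_n$, which is already supplied by Lemma~\ref{lem:extension-to-K}; beyond that, the proof is a direct assembly of the preceding lemmas, so I anticipate no substantial obstacle, only the care required to identify the correct degrees when invoking Lemma~\ref{lem:limit-of-exact} and Proposition~\ref{prop:weak-commutativity}.
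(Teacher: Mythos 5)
Your proposal is correct and follows essentially the same route as the paper's own proof: decompose $[\alpha]_\dR = \sum_i c_i \wedge c_i'$ in $\K$, write $\alpha$ as the harmonic representative $\sum_i h(c_i)\wedge h(c_i')$ plus an exact correction $d\beta$, pass to the limit on each product via Proposition~\ref{prop:weak-commutativity}, kill the exact term by Lemma~\ref{lem:limit-of-exact}, and upgrade from $C_0^\infty(B_2^n)$ to $C_0(B_2^n)$ by the uniform total-variation bound from Lemma~\ref{lem:normalized-bounded} together with density. The only cosmetic differences are the direction of the chain of equalities and the explicit mention of a Fatou bound on the limit density, neither of which changes the argument.
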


\begin{proof}
Let $[\alpha]_\dR = \sum_{i=1}^m c_i \wedge c_i'$ be a representation of $[\alpha]_\dR$ in $\K$ and let $\beta \in \Omega^{n-1}(N)$ be a form satisfying
\[
\sum_{i=1}^m h(c_i) \wedge h(c_i') = \alpha + d\beta.
\]
Since $\norm{F_j^! \alpha}_{1,B_2^n} \le KD^2 \norm{\alpha}_\infty$ by Lemma~\ref{lem:normalized-bounded} and $C_0^\infty(B_2^n)$ is dense in $C_0(B_2^n)$ with respect to the $L^\infty$-norm, it suffices to prove that
\[
\int_{B_2^n} \varphi d\widehat{L}_n [\alpha]_\dR = \lim_{j\to \infty} \int_{B_2^n} \varphi F_j^! \alpha
\]
for every $\varphi \in C_0^\infty(B_2^n)$.

By Proposition~\ref{prop:weak-commutativity} and Lemma~\ref{lem:limit-of-exact}, we have
\begin{align*}
&\int_{B_2^n} \varphi d\widehat{L}_n [\alpha]_\dR \\
	&\quad = \int_{B_2^n} \varphi \left( \sum_{i=1}^m d\widehat{L}c_i \wedge d\widehat{L}c_i' \right) = \sum_{i=1}^m \lim_{j\to \infty} \int_{B_2^n} \varphi F_j^\# c_i \wedge F_j^\# c_i' \\
	&\quad = \lim_{j\to \infty} \int_{B_2^n} \varphi \sum_{i=1}^m F_j^! (h(c_i) \wedge h(c_i')) = \lim_{j\to \infty} \int_{B_2^n} \varphi F_j^! (\alpha + d\beta) \\
	&\quad = \lim_{j\to \infty} \int_{B_2^n} \varphi F_j^! \alpha
\end{align*}
for every $\varphi \in C_0^\infty(B_2^n)$. This concludes the proof.
\end{proof}

For $\alpha=\omega$, we get the following improved version of Proposition~\ref{prop:limit-measure}; see also \cite[Corollary~4.5]{Heikkila-Pankka}.

\begin{corollary} \label{cor:prob-measure}
Let $N$ be a closed, connected, and oriented Riemannian manifold. Let $2\le n\le \dim N$ and let $\omega \in \Omega^n(N)$ be a closed form with $0\ne [\omega]_\dR \in \K$. Let $K,D\ge 1$. Let $(F_j)$ be a sequence in $\F$ for which $A_\omega(F_j)\to \infty$ and suppose that the sequence $(F_j^\#)$ has a Sobolev--Poincaré limit $\widehat{L}$. Then there exists a non-negative measure $\mu \in L^1(B_2^n; \bigwedge^n \R^n)$ having following properties:
\begin{enumerate}
\item $\mu = d\widehat{L}_n [\omega]_\dR$, where $d\widehat{L}_n$ is the extension of $d\widehat{L}$ to $\K$ as defined in Lemma~\ref{lem:extension-to-K},
\item $F_j^! \omega \weakto \mu$ vaguely as measures on $B_2^n$,
\item $F_j^! \omega \llcorner \bar B^n \weakto \mu \llcorner \bar B^n$ weakly as measures on $\bar B^n$,
and
\item $\mu(B^n)=1$.
\end{enumerate}
\end{corollary}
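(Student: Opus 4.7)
The plan is to set $\mu := d\widehat{L}_n[\omega]_\dR$, which by Lemma~\ref{lem:extension-to-K} is a well-defined element of $L^1(B_2^n; \bigwedge^n \R^n)$; this gives (1) by definition. Then I would apply Proposition~\ref{prop:limit-measure} with $\alpha = \omega$ to obtain the vague convergence in (2). For non-negativity of $\mu$ via the second assertion of Proposition~\ref{prop:limit-measure}, the quasiregular curve inequality $(\norm{\omega} \circ F_j)\norm{DF_j}^n \le K(\star F_j^* \omega)$ forces $\star F_j^* \omega \ge 0$ almost everywhere, so each $F_j^!\omega = A_\omega(F_j)^{-1} F_j^* \omega$ is a non-negative Radon measure on $B_2^n$.

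The crucial new content is part (4). The key observations are that $\int_{B^n} F_j^! \omega = 1$ for every $j$ by the definition of $A_\omega$, and that $\mu$, being an $L^1$-form, is absolutely continuous with respect to Lebesgue measure and hence satisfies $\mu(\partial B^n)=0$. I would sandwich $\mu(B^n) = \mu(\bar B^n)$ between two bounds. For the upper bound, take cutoffs $\eta_m \in C_c^\infty(B^n)$ with $\eta_m \nearrow \mathbf{1}_{B^n}$; vague convergence gives $\int \eta_m \, d\mu = \lim_j \int \eta_m \, dF_j^!\omega \le 1$, and monotone convergence yields $\mu(B^n) \le 1$. For the lower bound, take $\eta_m \in C_c^\infty(B_2^n)$ with $\eta_m = 1$ on $\bar B^n$ and $\spt \eta_m \searrow \bar B^n$; vague convergence and the trivial bound $\int \eta_m \, dF_j^!\omega \ge 1$ give $\int \eta_m \, d\mu \ge 1$, and dominated convergence together with $\mu(\partial B^n) = 0$ yield $\mu(\bar B^n) \ge 1$.

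For (3), fix $\varphi \in C(\bar B^n)$ and extend it to $\tilde\varphi \in C_0(B_2^n)$. For $\epsilon > 0$, choose $\eta_\epsilon \in C_c^\infty(B_2^n)$ equal to $1$ on $\bar B^n$ with $\spt \eta_\epsilon \subset B(0, 1+\epsilon)$, and split
\[
\int_{\bar B^n} \varphi \, dF_j^!\omega = \int \eta_\epsilon \tilde\varphi \, dF_j^!\omega - \int_{B_2^n \setminus \bar B^n} \eta_\epsilon \tilde\varphi \, dF_j^!\omega.
\]
The first term converges as $j \to \infty$ to $\int \eta_\epsilon \tilde\varphi \, d\mu$ by vague convergence, while the second is bounded in absolute value by $\norm{\varphi}_\infty \bigl(\int \eta_\epsilon \, dF_j^!\omega - 1\bigr)$, which tends via (4) and vague convergence to $\norm{\varphi}_\infty \int_{B_2^n \setminus \bar B^n} \eta_\epsilon \, d\mu$. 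Performing the analogous splitting for $\mu$ and then letting $\epsilon \to 0$, the absolute continuity of $\mu$ together with $\mu(\partial B^n)=0$ and dominated convergence kill the annular error term, establishing the weak convergence claimed in (3). The main obstacle throughout is the boundary-mass issue: vague convergence on the open set $B_2^n$ does not automatically produce weak convergence of the restrictions to the closed set $\bar B^n$, and both (3) and (4) hinge on using $\mu \in L^1$, supplied by Lemma~\ref{lem:extension-to-K}, to rule out limiting mass on $\partial B^n$.
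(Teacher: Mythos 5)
Your proposal is correct and uses essentially the same mechanism as the paper: in both arguments the crucial ingredients are that $\mu\in L^1$ (so $\mu(\partial B^n)=0$), that each $F_j^!\omega$ is non-negative with $\int_{B^n}F_j^!\omega=1$, and a sandwich with compactly supported cutoff functions on either side of $\bar B^n$ combined with the vague convergence from Proposition~\ref{prop:limit-measure}. The only difference is one of arrangement: you establish (4) first by a direct two-sided cutoff argument and then use it to control the annular error term in (3), whereas the paper proves the convergence in (3) first (for non-negative $\varphi$, then by splitting a general $\varphi$ into positive and negative parts) and obtains (4) as the special case $\varphi=1$.
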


\begin{remark}
The assumption $[\omega]_\dR \in \K$ is necessary; see e.g.~the example in \cite[Section~8]{Heikkila-Pankka}.
\end{remark}

\begin{proof}[Proof of Corollary~\ref{cor:prob-measure}]
By Proposition~\ref{prop:limit-measure}, there exists a non-negative measure $\mu \in L^1(B_2^n; \bigwedge^n \R^n)$ satisfying (1) and (2). Thus, it suffices to show that $\mu$ satisfies also (3) and (4).

Let $\varphi \in C(\bar B^n)$ be a non-negative function. Let $\tilde{\varphi} \in C_b(B_2^n)$ be a non-negative bounded function satisfying $\tilde{\varphi}|_{\bar B^n}=\varphi$. Then $\tilde{\varphi} F_j^! \omega \weakto \tilde{\varphi} \mu$ vaguely as measures on $B_2^n$.

Let $0<r<1<R<2$. Let $\psi_r \in C_0(B^n)$ and $\psi_R \in C_0(B^n(0,R))$ satisfy $\chi_{B^n(0,r)} \le \psi_r \le 1$ and $\chi_{B^n} \le \psi_R \le 1$. Since $\tilde{\varphi} \mu$ is non-negative, we obtain
\[
\int_{\bar B^n(0,r)} \tilde{\varphi} \mu \le \int_{B_2^n} \psi_r \tilde{\varphi} \mu = \lim_{j\to \infty} \int_{B_2^n} \psi_r \tilde{\varphi} F_j^! \omega \le \liminf_{j\to \infty} \int_{B^n} \tilde{\varphi} F_j^! \omega
\]
and
\[
\int_{B^n(0,R)} \tilde{\varphi} \mu \ge \int_{B_2^n} \psi_R \tilde{\varphi} \mu = \lim_{j\to \infty} \int_{B_2^n} \psi_R \tilde{\varphi} F_j^! \omega \ge \limsup_{j\to \infty} \int_{B^n} \tilde{\varphi} F_j^! \omega.
\]
Since $\mu \in L^1(B_2^n; \bigwedge^n \R^n)$,
\[
\liminf_{j\to \infty} \int_{B^n} \tilde{\varphi} F_j^! \omega \ge \tilde{\varphi} \mu(B^n) = \tilde{\varphi} \mu(\bar B^n) \ge \limsup_{j\to \infty} \int_{B^n} \tilde{\varphi} F_j^! \omega.
\]
Thus
\begin{equation}
\label{eq:weak-convergence-for-phi}
\lim_{j\to \infty} \int_{\bar B^n} \varphi F_j^! \omega = \int_{\bar B^n} \varphi \mu.    
\end{equation}
Equality \eqref{eq:weak-convergence-for-phi} is then obtained for an arbitrary function $\varphi \in C(\bar B^n)$ by writing $\varphi = \max(\varphi,0) - \max(-\varphi,0)$.

Especially, since $\mu \in L^1(B_2^n; \bigwedge^n \R^n)$, we have, for $\varphi = 1$, that
\begin{align*}
\mu(B^n) &= \int_{\bar B^n} \mu = \lim_{j\to \infty} \int_{\bar B^n} F_j^! \omega = \lim_{j\to \infty} \int_{B^n} F_j^! \omega \\
&= \lim_{j\to \infty} A_\omega(F_j)^{-1} \int_{B^n} F_j^* \omega = 1.
\end{align*}
This concludes the proof.
\end{proof}

\subsection*{Homomorphism $H_\dR^*(N) \to \cW^*(B_2^n)$}

Next, we show that, if the normalizations grow without bound, then the weak exterior derivative of a Sobolev--Poincare limit extends to a homomorphism $H_\dR^*(N) \to \cW^*(B_2^n)$; see also \cite[Proposition~5.1]{Heikkila-Pankka}.

For a precise statement, we introduce the following notation. Let $N$ be a closed, connected, and oriented Riemannian manifold and let $\omega \in \Omega^n(N)$ be a closed non-exact form with $2\le n\le \dim N$. Let $\pi \colon H_\dR^n(N) \to H_\dR^n(N)$ be a projection onto $\K$, i.e., a linear map satisfying $\pi \circ \pi = \pi$ and $\im \pi = \K$.

Let $K,D\ge 1$. Let $(F_j)$ be a sequence in $\F$ for which $A_\omega(F_j)\to \infty$ and let $\widehat{L}$ be the Sobolev--Poincaré limit of $(F_j^\#)$. Let
\[
\Psi_{d\widehat{L}}^\pi \colon H_\dR^*(N) \to \cW^*(B_2^n)
\]
be the graded linear map defined by
\[
H_\dR^k(N) \ni c\mapsto \begin{cases}
\text{constant function } h(c), &\, \text{if } k=0, \\
d\widehat{L}c, &\, \text{if } 1\le k\le n-1, \\
d\widehat{L}_n\pi c, &\, \text{if } k=n, \\
0, &\, \text{otherwise.}
\end{cases}
\]
For the reader's convenience, we recall that $d\widehat{L}_n \colon \K \to L^1(B_2^n; \bigwedge^n \R^n)$ is the linear map
\[
\sum_{i=1}^m c_i \wedge c_i' \mapsto \sum_{i=1}^m d\widehat{L}c_i \wedge d\widehat{L}c_i'.
\]

\begin{proposition} \label{prop:extension-homom}
Let $N$ be a closed, connected, and oriented Riemannian manifold and let $\omega \in \Omega^n(N)$ be a closed non-exact form with $2\le n\le \dim N$. Let $\pi \colon H_\dR^n(N) \to H_\dR^n(N)$ be a projection onto $\K$. Let $K,D\ge 1$. Let $(F_j)$ be a sequence in $\F$ for which $A_\omega(F_j)\to \infty$ and suppose that the sequence $(F_j^\#)$ has a Sobolev--Poincaré limit $\widehat{L}$. Then $\Psi_{d\widehat{L}}^\pi \colon H_\dR^*(N) \to \cW^*(B_2^n)$ is a homomorphism.
\end{proposition}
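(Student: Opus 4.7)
The plan is to check the multiplicativity identity $\Psi_{d\widehat{L}}^\pi(c_1 \wedge c_2) = \Psi_{d\widehat{L}}^\pi(c_1) \wedge \Psi_{d\widehat{L}}^\pi(c_2)$ degree by degree. Since linearity and the grading are built into the definition of $\Psi_{d\widehat{L}}^\pi$, by bilinearity it suffices to fix classes $c_i \in H_\dR^{k_i}(N)$ and split into cases according to $(k_1,k_2)$.

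First I would dispose of the trivial regimes. If $k_i > n$ for some $i$, then $\Psi_{d\widehat{L}}^\pi(c_i) = 0$ while $c_1 \wedge c_2$ itself has degree greater than $n$, so both sides vanish. If $k_1,k_2 \ge 1$ and $k_1+k_2 > n$, the right-hand side lands in $\bigwedge^{k_1+k_2}\R^n = \{0\}$, and the left-hand side is again zero by the definition of $\Psi_{d\widehat{L}}^\pi$ above degree $n$. If one of the $k_i$ equals zero, then since $N$ is connected the harmonic representative $h(c_i)$ is a constant $\lambda \in \R$ and $c_1 \wedge c_2 = \lambda c_{3-i}$, so both sides reduce to $\lambda \Psi_{d\widehat{L}}^\pi(c_{3-i})$ by the linearity of $\Psi_{d\widehat{L}}^\pi$ and of the projection $\pi$; this handles the surviving factor regardless of whether its degree is less than, equal to, or greater than $n$.

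Next, in the middle regime $k_1,k_2 \ge 1$ and $k_1+k_2 \le n-1$, the class $c_1 \wedge c_2$ lies in $H_\dR^{k_1+k_2}(N)$ with $1 \le k_1+k_2 \le n-1$, so by definition $\Psi_{d\widehat{L}}^\pi(c_1 \wedge c_2) = d\widehat{L}(c_1 \wedge c_2)$. Lemma~\ref{lem:commutativity} then yields $d\widehat{L}(c_1 \wedge c_2) = d\widehat{L}c_1 \wedge d\widehat{L}c_2 = \Psi_{d\widehat{L}}^\pi(c_1) \wedge \Psi_{d\widehat{L}}^\pi(c_2)$, closing this case.

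The remaining, and conceptually most delicate, case is the top regime $k_1,k_2 \ge 1$ with $k_1+k_2 = n$. Here the product $c_1 \wedge c_2$ belongs to $\K$ by the very definition of the Künneth ideal, so $\pi(c_1 \wedge c_2) = c_1 \wedge c_2$, and by the construction of $d\widehat{L}_n$ in Lemma~\ref{lem:extension-to-K} we obtain
\[
\Psi_{d\widehat{L}}^\pi(c_1 \wedge c_2) = d\widehat{L}_n(c_1 \wedge c_2) = d\widehat{L}c_1 \wedge d\widehat{L}c_2 = \Psi_{d\widehat{L}}^\pi(c_1) \wedge \Psi_{d\widehat{L}}^\pi(c_2).
\]
The only real obstacle is recognising that in the multiplicativity check the projection $\pi$ is transparent: every product of positive-degree classes summing to $n$ lies automatically in $\K$, so $\pi$ acts as the identity on it, and the role of $\pi$ in the very definition of $\Psi_{d\widehat{L}}^\pi$ is only to guarantee well-definedness on the complement of $\K$ in $H_\dR^n(N)$. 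Once this observation is in place, no analytic input beyond Lemma~\ref{lem:commutativity} and Lemma~\ref{lem:extension-to-K} is needed to complete the case analysis.
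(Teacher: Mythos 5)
Your proof is correct and follows essentially the same route as the paper's: reduce to pure-degree classes, invoke Lemma~\ref{lem:commutativity} when $k_1+k_2<n$, and when $k_1+k_2=n$ use that $c_1\wedge c_2\in \K$ so $\pi$ acts as the identity and the defining formula of $d\widehat{L}_n$ gives the product. The paper simply dispatches the degree-$0$ and out-of-range cases with a ``without loss of generality'' remark where you spell them out explicitly.
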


\begin{proof}
Let $k_1,k_2 \in \{0,\ldots,\dim N\}$ and let $c_i \in H_\dR^{k_i}(N)$. Without loss of generality, we may assume that $k_1,k_2>0$ and that $k_1+k_2 \le n$.

If $k_1+k_2<n$, then, by Lemma~\ref{lem:commutativity}, we have
\[
\Psi_{d\widehat{L}}^\pi(c_1 \wedge c_2) = d\widehat{L}(c_1 \wedge c_2) = d\widehat{L}c_1 \wedge d\widehat{L}c_2 = \Psi_{d\widehat{L}}^\pi c_1 \wedge \Psi_{d\widehat{L}}^\pi c_2.
\]
Suppose $k_1+k_2=n$. Since $\pi|_{\K}=\id$, we obtain
\[
\Psi_{d\widehat{L}}^\pi (c_1 \wedge c_2) = d\widehat{L}_n\pi (c_1 \wedge c_2) = d\widehat{L}_n(c_1 \wedge c_2) = d\widehat{L}c_1 \wedge d\widehat{L}c_2 = \Psi_{d\widehat{L}}^\pi c_1 \wedge \Psi_{d\widehat{L}}^\pi c_2.
\]
The claim follows by linearity.
\end{proof}

We are now ready to prove the following generalization of Theorem~\ref{thm:limit-extension}.

\begin{theorem} \label{thm:full-limit-extension}
Let $N$ be a closed, connected, and oriented Riemannian manifold and let $\omega \in \Omega^n(N)$ be a closed non-exact form with $2\le n\le \dim N$. Let $K,D\ge 1$. Let $(F_j)$ be a sequence in $\F$ for which $A_\omega(F_j)\to \infty$ and $F_j^\# \weakto L$, where
\[
F_j^\#,L \colon \bigoplus_{k=1}^{n-1} H_\dR^k(N) \to \bigoplus_{k=1}^{n-1} L^\frac{n}{k}(B_2^n; {\bigwedge}^k \R^n).
\]
Then $L$ extends to a graded algebra homomorphism $L\colon H_\dR^*(N)\to \cW^*(B_2^n)$ for which $F_j^\#|_{\K} \weakto L|_{\K}$ vaguely as measures. Moreover, if $[\omega]_\dR \in \K$, then
\[
\int_{B^n} L[\omega]_\dR = 1 \; \text{ and } \; Lc\ne 0 \text{ if } c \text{ is a factor of } [\omega]_\dR,
\]
i.e., $c\in H_\dR^*(N)$ and there exists $c'\in H_\dR^{n-*}(N)$ for which $c\wedge c' = [\omega]_\dR$.
\end{theorem}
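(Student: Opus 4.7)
The plan is to construct $L$ by extracting a Sobolev--Poincaré limit along a subsequence, applying the homomorphism result of Proposition~\ref{prop:extension-homom}, and then transferring the vague convergence back to the full sequence by a subsequence argument. The main obstacle is showing vague convergence on all of $\K$ for the original sequence $(F_j)$, not merely for the subsequence along which a Sobolev--Poincaré limit is taken; this requires first observing that the extension $L|_\K$ is canonical in a suitable sense.

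First I apply Lemma~\ref{lem:Sob-Poinc} to $(F_j^\#)$ and pass to a subsequence, still denoted $(F_j)$, yielding a Sobolev--Poincaré limit $\widehat{L}$ with $d\widehat{L} = L$ on $\bigoplus_{k=1}^{n-1} H_\dR^k(N)$. I then fix any projection $\pi \colon H_\dR^n(N) \to H_\dR^n(N)$ onto $\K$ and declare the extended map to be $L := \Psi_{d\widehat{L}}^\pi \colon H_\dR^*(N) \to \cW^*(B_2^n)$, as in the paragraph preceding Proposition~\ref{prop:extension-homom}. By that proposition, $L$ is a graded algebra homomorphism. The crucial observation is that for any $c \in \K$ written as $c = \sum_i c_i \wedge c_i'$ with $\deg c_i, \deg c_i' \in \{1,\ldots,n-1\}$, one has
\[
L c \;=\; d\widehat{L}_n c \;=\; \sum_i L c_i \wedge L c_i',
\]
which depends only on $L$ on middle degrees; in particular, $L|_\K$ is independent of the subsequence extracted above and of the projection $\pi$.

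For the vague convergence along the full original sequence, suppose toward a contradiction that there exist $c \in \K$, $\varphi \in C_0(B_2^n)$, $\varepsilon > 0$, and a subsequence $(F_{j_m})$ witnessing $\abs{\int \varphi F_{j_m}^\# c - \int \varphi L c} \ge \varepsilon$. On this subsequence $F_{j_m}^\# \weakto L$ still holds on middle degrees, so Lemma~\ref{lem:Sob-Poinc} yields a further subsequence admitting a Sobolev--Poincaré limit $\widehat{L}'$ with $d\widehat{L}' = L$. By the subsequence-independence established above, $d\widehat{L}'_n c = L c$, and Proposition~\ref{prop:limit-measure} then produces vague convergence $F_{j_m}^\# c \weakto L c$ along this further subsequence, contradicting the failure assumption. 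Hence $F_j^\#|_\K \weakto L|_\K$ vaguely along the entire original sequence.

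Finally, suppose $[\omega]_\dR \in \K$. Corollary~\ref{cor:prob-measure}(4) directly yields $\int_{B^n} L[\omega]_\dR = \mu(B^n) = 1$. If $c \wedge c' = [\omega]_\dR$ with $c \in H_\dR^k(N)$ and $c' \in H_\dR^{n-k}(N)$, the homomorphism property of $L$ gives $L[\omega]_\dR = Lc \wedge Lc'$; if $Lc = 0$ then $L[\omega]_\dR = 0$, contradicting $\int_{B^n} L[\omega]_\dR = 1$. The edge cases $k = 0$ and $k = n$ reduce, respectively, to the fact that $h(c)$ is a nonzero constant and to $c$ being a nonzero scalar multiple of $[\omega]_\dR$.
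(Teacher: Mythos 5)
Your proof is correct and follows essentially the same route as the paper: extract a Sobolev--Poincar\'e limit via Lemma~\ref{lem:Sob-Poinc}, obtain the homomorphism from Proposition~\ref{prop:extension-homom}, and get the vague convergence and normalization from Proposition~\ref{prop:limit-measure} and Corollary~\ref{cor:prob-measure}. The one place you add genuine value is the transfer of vague convergence from the subsequence (where the Sobolev--Poincar\'e limit lives) back to the full original sequence: the paper applies Proposition~\ref{prop:limit-measure} directly to $(F_j^\#)$ even though Lemma~\ref{lem:Sob-Poinc} only produces a Sobolev--Poincar\'e limit along a subsequence, whereas you explicitly note that $L|_{\K}$ is intrinsic (it depends only on $L$ restricted to middle degrees, hence is independent of the subsequence and of the projection $\pi$) and close the gap with a standard contradiction argument. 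This is a legitimate clarification. One could alternatively observe that, since $T$ is a compact operator and the spaces $L^{n/k}(B_2^n;\bigwedge^k\R^n)$ for $1\le k\le n-1$ are reflexive, the weak convergence $F_j^\#\weakto L$ already forces $TF_j^\#\to TL$ strongly along the full sequence, so no subsequence extraction is actually necessary; but your argument works just as well and is self-contained given the lemma as stated.
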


\begin{proof}
By Lemma~\ref{lem:Sob-Poinc}, there exists a subsequence $(F_{j_i})$ for which the operator $L$ is the weak exterior derivative $d\widehat{L}$ of a Sobolev--Poincaré limit $\widehat{L}$ of $(F_{j_i}^\#)$. Let $\pi \colon H_\dR^n(N) \to H_\dR^n(N)$ be a projection onto $\K$. Then, by Proposition~\ref{prop:extension-homom}, the map $\Psi_{d\widehat{L}}^\pi \colon H_\dR^*(N) \to \cW^*(B_2^n)$ is a graded algebra homomorphism and $\Psi_{d\widehat{L}}^\pi$ extends $L$ by definition. Furthermore, for each $c\in \K$, we have
\[
F_j^\# c = F_j^!(h(c)) \weakto d\widehat{L}_nc = d\widehat{L}_n \pi c = \Psi_{d\widehat{L}}^\pi c
\]
vaguely as measures by Proposition~\ref{prop:limit-measure}.

Suppose that $[\omega]_\dR \in \K$. By Corollary~\ref{cor:prob-measure}, we obtain
\[
\int_{B^n} \Psi_{d\widehat{L}}^\pi [\omega]_\dR = \int_{B^n} d\widehat{L}_n\pi [\omega]_\dR = \int_{B^n} d\widehat{L}_n [\omega]_\dR = 1.
\]
Hence, it remains to prove that $\Psi_{d\widehat{L}}^\pi c\ne 0$ if $c$ is a factor of $[\omega]_\dR$.

Let $c\in H_\dR^k(N)$ and $c'\in H_\dR^{n-k}(N)$ satisfy $c\wedge c'=[\omega]_\dR$, where $k\in \{0,\ldots,n\}$. Then, we have
\[
\Psi_{d\widehat{L}}^\pi c\wedge \Psi_{d\widehat{L}}^\pi c' = \Psi_{d\widehat{L}}^\pi (c\wedge c') = \Psi_{d\widehat{L}}^\pi [\omega]_\dR \ne 0
\]
implying $\Psi_{d\widehat{L}}^\pi c\ne 0$.
\end{proof}

\section{Proof of Theorem~\ref{thm:sup}} \label{sec:proof-of-thm:sup}

In this section, we recall the statement of Theorem~\ref{thm:sup} and give its proof.

\begin{named}{Theorem~\ref{thm:sup}}
Let $N$ be a closed, connected, and oriented Riemannian manifold. Let $2\le n\le \dim N$ and let $\omega \in \Omega^n(N)$ be a closed form with $0\ne [\omega]_{\dR} \in \K$. Assume also that there exists $K,D\ge 1$ for which
\[
\sup_{F\in \F} A_\omega(F) = \infty.
\]
Then there exists a graded algebra homomorphism $\Phi \colon H_{\dR}^*(N) \to \bigwedge^* \R^n$ for which $\Phi [\omega]_{\dR}\ne 0$.
\end{named}

Before giving the proof of Theorem~\ref{thm:sup}, we state the following elementary lemma.

\begin{lemma} \label{lem:existence-of-weak-limit}
Let $N$ be a closed, connected, and oriented Riemannian manifold and let $\omega \in \Omega^n(N)$ be a closed non-exact form with $2\le n\le \dim N$. Let $K,D\ge 1$. Let $(F_j)$ be a sequence in $\F$. Then there exist a subsequence $(F_{j_i})$ and an operator
\[
L\colon \bigoplus_{k=1}^{n-1} H_\dR^k(N) \to \bigoplus_{k=1}^{n-1} L^\frac{n}{k}(B_2^n,{\bigwedge}^k \R^n)
\]
satisfying $F_{j_i}^\#|_{H_\dR^k(N)} \weakto L$ for $k=1,\ldots,n-1$.
\end{lemma}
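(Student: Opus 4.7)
The plan is to pass to a weakly convergent subsequence on a finite basis of cohomology and extend linearly. Since $N$ is closed, each de Rham cohomology group $H_\dR^k(N)$ is finite-dimensional, so for each $k\in\{1,\ldots,n-1\}$ I can fix a basis $\{c_1^{(k)},\ldots,c_{m_k}^{(k)}\}\subset H_\dR^k(N)$. By Lemma~\ref{lem:normalized-bounded}, the sequence $(F_j^\# c_i^{(k)}) = (F_j^!(h(c_i^{(k)})))$ is bounded in $L^{n/k}(B_2^n;\bigwedge^k\R^n)$ by $KD^2\,\norm{h(c_i^{(k)})}_\infty$, with a bound independent of $j$.

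For $1\le k\le n-1$ we have $n/k\in(1,\infty)$, so $L^{n/k}(B_2^n;\bigwedge^k\R^n)$ is a reflexive Banach space. Therefore, by the Banach--Alaoglu theorem in the form of sequential weak compactness of bounded sets in reflexive spaces, any bounded sequence has a weakly convergent subsequence. Applying this successively (a finite diagonal extraction over the finite collection of basis elements $c_i^{(k)}$, $k=1,\ldots,n-1$, $i=1,\ldots,m_k$), I obtain a single subsequence $(F_{j_i})$ such that, for every $k$ and every basis element $c_i^{(k)}$, the sequence $F_{j_i}^\# c_i^{(k)}$ converges weakly in $L^{n/k}(B_2^n;\bigwedge^k\R^n)$ to some limit $\ell_i^{(k)}$.

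I then define $L$ on $\bigoplus_{k=1}^{n-1} H_\dR^k(N)$ by declaring $L c_i^{(k)} := \ell_i^{(k)}$ on each basis vector and extending linearly. The resulting graded linear operator $L\colon \bigoplus_{k=1}^{n-1} H_\dR^k(N)\to \bigoplus_{k=1}^{n-1} L^{n/k}(B_2^n;\bigwedge^k\R^n)$ is well-defined because the domain is finite-dimensional. Finally, since weak convergence is preserved under finite linear combinations and $F_{j_i}^\#$ acts linearly on $H_\dR^*(N)$, for an arbitrary $c=\sum_i \lambda_i c_i^{(k)}\in H_\dR^k(N)$ we have
\[
F_{j_i}^\# c = \sum_i \lambda_i F_{j_i}^\# c_i^{(k)} \weakto \sum_i \lambda_i \ell_i^{(k)} = Lc
\]
in $L^{n/k}(B_2^n;\bigwedge^k\R^n)$, as required.

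There is no real obstacle: the argument only needs the finite-dimensionality of $H_\dR^k(N)$ (from compactness of $N$), the uniform $L^{n/k}$-bound on normalized pull-backs (Lemma~\ref{lem:normalized-bounded}), and reflexivity of $L^{n/k}$ for $1\le k\le n-1$. The exclusion of $k=n$ is precisely what allows reflexivity; the missing top degree is handled separately via the Sobolev--Poincaré machinery and the extension of Lemma~\ref{lem:extension-to-K}.
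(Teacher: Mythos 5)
Your proof is correct and follows exactly the same route the paper does: finite-dimensionality of the cohomology groups, the uniform bound from Lemma~\ref{lem:normalized-bounded}, and weak sequential compactness of bounded sets in the reflexive spaces $L^{n/k}(B_2^n;\bigwedge^k\R^n)$ for $1\le k\le n-1$ (the paper cites this under the umbrella of the Banach--Alaoglu theorem). The only difference is that the paper states this in one line, whereas you spell out the finite extraction over a basis and the linear extension; both are the same argument.
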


\begin{proof}
Since each $H_\dR^k(N)$ is finite-dimensional, the claim follows immediately by Lemma~\ref{lem:normalized-bounded} and the Banach--Alaoglu theorem.
\end{proof}

We are now ready to prove Theorem~\ref{thm:sup}.

\begin{proof}[Proof of Theorem~\ref{thm:sup}]
There exist a sequence $(F_j)$ in $\F$ and an operator
\[
L\colon \bigoplus_{k=1}^{n-1} H_\dR^k(N) \to \bigoplus_{k=1}^{n-1} L^\frac{n}{k}(B_2^n,{\bigwedge}^k \R^n)
\]
satisfying $A_\omega(F_j)\to \infty$ and $F_j^\#|_{\oplus_{k=1}^{n-1} H_\dR^k(N)} \weakto L$ by Lemma~\ref{lem:existence-of-weak-limit}. By Theorem~\ref{thm:full-limit-extension}, the operator $L$ extends to a graded algebra homomorphism $L\colon H_\dR^*(N)\to \cW^*(B_2^n)$ satisfying $\int_{B^n} L[\omega]_\dR = 1$.

Let $c_1,\ldots,c_m,c_{m+1},\ldots,c_\nu$ be a basis of $H_\dR^*(N)$, where $c_1,\ldots,c_m$ is a basis of $\K$. Since $[\omega]_\dR \in \K$, there exist $\lambda_1,\ldots,\lambda_m \in \R$ satisfying $[\omega]_\dR = \sum_{i=1}^m \lambda_i c_i$.

We fix point-wise Borel representatives for $Lc_i$ and $L(c_i \wedge c_\ell)$, where $i,\ell=1,\ldots,\nu$. Let
\[
E = \left\{ x\in B^n \colon \left( \sum_{i=1}^m \lambda_i Lc_i \right)(x) \ne 0 \right\}
\]
and
\[
E_{i,\ell} = \{ x\in B^n \colon (Lc_i)(x) \wedge (Lc_\ell)(x) = (Lc_i \wedge c_\ell)(x) \}
\]
for $i,\ell=1,\ldots,\nu$. Now $m_n(E_{i,\ell})=m_n(B^n)$ for each pair $(i,\ell)\in \{1,\ldots,\nu\}^2$. Furthermore, $m_n(E)>0$ since
\[
\int_E \sum_{i=1}^m \lambda_i Lc_i = \int_{B^n} \sum_{i=1}^m \lambda_i Lc_i = \int_{B^n} L[\omega]_\dR = 1.
\]
Thus there exists $x_0 \in \cap_{i,\ell=1}^\nu E\cap E_{i,\ell}$.

Let $\Phi \colon H_\dR^*(N)\to \bigwedge^* \R^n$ be the linear map defined by $c_i \mapsto (Lc_i)(x_0)$ for $i=1,\ldots,\nu$. By definition, the map $\Phi$ is graded. Moreover, we have
\[
\Phi(c_i \wedge c_\ell) = (L(c_i \wedge c_\ell))(x_0) = (Lc_i)(x_0) \wedge (Lc_\ell)(x_0) = \Phi c_i \wedge \Phi c_\ell
\]
for each pair $(i,\ell)\in \{1,\ldots,\nu\}^2$ since $x_0 \in E_{i,\ell}$. Hence, the map $\Phi$ is a homomorphism by linearity. Finally,
\[
\Phi [\omega]_\dR = \Phi \left( \sum_{i=1}^m \lambda_i c_i \right) = \sum_{i=1}^m \lambda_i \Phi c_i = \left( \sum_{i=1}^m \lambda_i Lc_i \right)(x_0) \ne 0
\]
since $x_0 \in E$.
\end{proof}

\section{Submanifolds controlled by quasiregular curves} \label{sec:submanifold}

In this section, we present two results, which together yield a stronger version of Theorem~\ref{thm:submfd}. The first result is, in fact, a reformulation of Theorem~\ref{thm:main}.

\begin{theorem} \label{thm:reformulated-main}
Let $N$ be a closed, connected, and oriented Riemannian manifold. Let $2\le n\le \dim N$ and let $\omega \in \Omega^n(N)$ be a closed form satisfying $0\ne [\omega]_\dR \in \K$. Let $K\ge 1$ and let $F\colon \R^n \to N$ be a $K$-quasiregular $\omega$-curve for which $\int_{\R^n} F^* \omega = \infty$ and $\inf_{\R^n} (\norm{\omega} \circ F)>0$. Then there exists a graded algebra homomorphism $\Psi \colon H_\dR^*(N) \to \cW^*(B_2^n)$ satisfying
\[
\int_{B^n} \Psi [\omega]_\dR = 1 \quad \text{ and } \quad \core_\omega(F) \cap \K \subset \ker \Psi.
\]
\end{theorem}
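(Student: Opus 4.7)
The plan is to construct $\Psi$ by repeating the limiting procedure used for Theorem~\ref{thm:sup} and then read off the kernel statement from how the normalized pull-backs enter the limit. First, I would apply Proposition~\ref{prop:reduction} to the given quasiregular $\omega$-curve $F$, producing a constant $D=D(n,\omega,F)>1$ and a sequence of affine rescalings $T_j(x)=r_jx+a_j$ so that $F_j:=F\circ T_j$ lies in $\F$ for every $j$ and $A_\omega(F_j)\to\infty$.

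By Lemma~\ref{lem:existence-of-weak-limit}, after passing to a subsequence (not relabelled) I may assume that the normalized pull-backs $F_j^\#$ converge weakly to an operator
\[
L\colon \bigoplus_{k=1}^{n-1}H_\dR^k(N)\to\bigoplus_{k=1}^{n-1}L^{n/k}(B_2^n;{\bigwedge}^k\R^n).
\]
Theorem~\ref{thm:full-limit-extension}, whose hypotheses $A_\omega(F_j)\to\infty$ and $[\omega]_\dR\in\K$ are both verified, then extends $L$ to a graded algebra homomorphism $\Psi:=L\colon H_\dR^*(N)\to\cW^*(B_2^n)$ satisfying $\int_{B^n}\Psi[\omega]_\dR=1$, which delivers the first conclusion.

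For the remaining inclusion $\core_\omega(F)\cap\K\subset\ker\Psi$, I would unpack the definition of $\core_\omega(F)$ introduced in Section~\ref{sec:submanifold}: a class $c\in\core_\omega(F)$ is one whose normalized pull-backs along every admissible rescaling sequence of $F$ vanish in the appropriate weak/vague sense. For $c\in\core_\omega(F)\cap\K$, Corollary~\ref{cor:prob-measure} identifies $\Psi c$ with the vague limit of the measures $F_j^!(h(c))$ on $B_2^n$, and this limit is forced to be zero by membership in the core; hence $\Psi c=0$. For factors of $c$ of intermediate degree, the weak convergence $F_j^\# c\weakto \Psi c$ in $L^{n/k}$ provided by Theorem~\ref{thm:full-limit-extension} yields the same conclusion.

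The main obstacle is bookkeeping: ensuring that the particular sequence $(F_j)$ produced by Rickman's Hunting Lemma, together with the further subsequence extracted by weak compactness, is genuinely representative of $\core_\omega(F)$. If the core is defined as the intersection of $\ker\Psi$ over all $\Psi$ arising from this construction, the inclusion is built in by definition; if instead it is defined via vanishing of the normalized pull-backs along every admissible rescaling sequence, then a standard diagonal argument across sequences suffices, since the construction above is preserved under passage to subsequences and its output depends only on the subsequence ultimately chosen.
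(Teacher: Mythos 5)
Your construction of $\Psi$ is exactly the paper's: apply Proposition~\ref{prop:reduction} to produce the rescaled family $F_j\in\F$ with $A_\omega(F_j)\to\infty$, extract a weak limit via Lemma~\ref{lem:existence-of-weak-limit}, and upgrade it to a graded algebra homomorphism $\Psi=L$ satisfying $\int_{B^n}\Psi[\omega]_\dR=1$ by Theorem~\ref{thm:full-limit-extension}. That part is right and identical in spirit.

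The gap is in the kernel inclusion, and it comes from not pinning down the definition of $\core_\omega(F)$. The paper defines $\core_\omega(F)\subset H_\dR^n(N)$ as the set of \emph{degree-$n$} classes $c$ for which
\[
\frac{\int_{B^n(a_j,r_j)}\lvert F^*(h(c))\rvert}{\int_{B^n(a_j,r_j)}F^*\omega}\longrightarrow 0
\]
for \emph{every} sequence of balls along which the denominator blows up; it is not a statement about weak or vague vanishing of pull-backs, and it has nothing to say about intermediate degrees (so your remark about "factors of $c$ of intermediate degree" does not apply — the inclusion is only at degree $n$). With this definition, the worry you raise about the specific sequence from Rickman's Hunting Lemma "being representative" evaporates: the universal quantifier over ball sequences is already built in, so no diagonal argument is needed.

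What is actually needed, and what your proposal asserts without proof, is the step "this limit is forced to be zero by membership in the core." The transition from the $L^1$-ratio condition defining $\core_\omega(F)$ (on balls $B^n(a_j,r_j)$) to the vanishing of $\int_{B_2^n}\varphi\,F_j^!(h(c))$ requires a short but genuine computation: bound $\lvert\int_{B_2^n}\varphi F_j^!(h(c))\rvert$ by $\|\varphi\|_\infty A_\omega(F_j)^{-1}\int_{B_2^n}\lvert F_j^*(h(c))\rvert$, rewrite $A_\omega(F_j)=\int_{B^n(a_j,r_j)}F^*\omega$ and $\int_{B_2^n}\lvert F_j^*(h(c))\rvert=\int_{B^n(a_j,2r_j)}\lvert F^*(h(c))\rvert$, and then invoke the doubling condition $\int_{B^n(a_j,2r_j)}F^*\omega\le D\int_{B^n(a_j,r_j)}F^*\omega$ coming from $F_j\in\F$ to reduce to the ratio on $B^n(a_j,2r_j)$, which tends to zero by the core hypothesis. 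This use of the $\F$-membership to match radii is the one nontrivial point; without it the $L^1$ information on dilated balls does not directly control the pairing against $\varphi$. Also note that the vague identification $F_j^\#c\weakto\Psi c$ comes from Theorem~\ref{thm:full-limit-extension} (or Proposition~\ref{prop:limit-measure}) for general $c\in\K$, not from Corollary~\ref{cor:prob-measure}, which is specific to $\alpha=\omega$.
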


Here and in what follows, $\core_\omega(F) \subset H_\dR^n(N)$ denotes the subspace of de Rham classes $c$ satisfying
\[
\lim_{j\to \infty} \frac{\int_{B^n(a_j,r_j)} |F^*(h(c))|}{\int_{B^n(a_j,r_j)} F^* \omega} = 0
\]
for any sequence $(B^n(a_j,r_j))$ with $\int_{B^n(a_j,r_j)} F^* \omega \to \infty$.

\begin{proof}
By Proposition~\ref{prop:reduction}, there exist $D\ge 1$ and $(a_j,r_j)\in \R^n \times (0,\infty)$ for which $F_j:=F\circ T_j \in \F$ and $A_\omega(F_j)\to \infty$, where $T_j \colon \R^n \to \R^n$ is the affine map $x\mapsto r_jx + a_j$. By passing to a subsequence, Lemma~\ref{lem:existence-of-weak-limit} and Theorem~\ref{thm:full-limit-extension} together yield that there exists a graded algebra homomorphism $L\colon H_\dR^*(N) \to \cW^*(B_2^n)$ for which  $F_j^\#|_{\K} \weakto L|_{\K}$ vaguely as measures and
\[
\int_{B^n} L[\omega]_\dR = 1.
\]
It remains to show that $\core_\omega (F) \cap \K \subset \ker L$.

Let $c\in \core_\omega(F) \cap \K$ and let $\varphi \in C_0(B_2^n)$. Then
\[
\int_{B_2^n} \varphi Lc = \lim_{j\to \infty} \int_{B_2^n} \varphi F_j^\# c = \lim_{j\to \infty} \int_{B_2^n} \varphi F_j^!(h(c)).
\]
On the other hand, since $F_j \in \F$ and $c\in \core_\omega(F)$, we have
\begin{align*}
\abs{\int_{B_2^n} \varphi F_j^!(h(c))} &= A_\omega(F_j)^{-1} \abs{\int_{B_2^n} \varphi F_j^*(h(c))} \\
&\le A_\omega(F_j)^{-1} \norm{\varphi}_\infty \int_{B_2^n} \abs{F_j^*(h(c))} \\
&= \norm{\varphi}_\infty \frac{\int_{B^n(a_j,2r_j)} \abs{F^*(h(c))}}{\int_{B^n(a_j,r_j)} F^* \omega} \\
&\le D\norm{\varphi}_\infty \frac{\int_{B^n(a_j,2r_j)} \abs{F^*(h(c))}}{\int_{B^n(a_j,2r_j)} F^* \omega} \to 0.
\end{align*}
Hence
\[
\int_{B_2^n} \varphi Lc = 0
\]
for an arbitrary function $\varphi \in C_0(B_2^n)$, which yields that $Lc = 0$. This concludes the proof.
\end{proof}

The next result is an elementary algebraic result based on the Poincaré duality on a submanifold.

\begin{proposition} \label{prop:algebraic-submanifold}
Let $N$ be a closed, connected, and oriented Riemannian manifold and let $M\subset N$ be a closed, connected, and oriented $n$-submanifold with $n\ge 2$. Let $c\in \K$ and let $\Psi \colon H_\dR^*(N) \to \cW^*(B_2^n)$ be a graded algebra homomorphism for which
\[
\int_{B^n} \Psi c = 1 \quad \text{ and } \quad \ker \iota^* \cap \K \subset \ker \Psi,
\]
where $\iota \colon M\to N$ is the inclusion. Let $1\le k\le n-1$ be an index for which $\iota^* \colon H_\dR^{n-k}(N) \to H_\dR^{n-k}(M)$ is surjective. Then $\dim \iota^* H_\dR^k(N) \le \binom{n}{k}$.
\end{proposition}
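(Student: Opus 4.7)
The plan is to mimic the proof of Theorem~\ref{thm:sup}: pointwise evaluation of $\Psi$ at a carefully chosen $x_0\in B^n$ produces a graded algebra homomorphism $\Phi\colon H_\dR^*(N)\to \bigwedge^* \R^n$ with $\Phi c\neq 0$, and then an argument based on Poincar\'e duality on $M$ yields $\ker \Phi|_{H_\dR^k(N)}\subset \ker \iota^*|_{H_\dR^k(N)}$, from which the dimension bound is immediate. To construct $\Phi$, I would fix a basis $c_1,\ldots,c_\nu$ of $H_\dR^*(N)$ and a basis $a_1',\ldots,a_s'$ of the finite-dimensional subspace $\ker \iota^*\cap \K$, and choose Borel representatives for the Sobolev forms $\Psi c_i$, $\Psi(c_i\wedge c_\ell)$, and $\Psi a_j'$. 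Exactly as in the proof of Theorem~\ref{thm:sup}, pick $x_0\in B^n$ in the intersection of: the full-measure sets where $\Psi c_i(x_0)\wedge \Psi c_\ell(x_0)=\Psi(c_i\wedge c_\ell)(x_0)$ for all $i,\ell$; the full-measure sets where $\Psi a_j'(x_0)=0$ for all $j$ (available because each $\Psi a_j'$ vanishes in $L^1$); and the positive-measure set $\{x\in B^n : (\Psi c)(x)\neq 0\}$, whose positivity follows from $\int_{B^n}\Psi c=1$. Setting $\Phi c_i:=\Psi c_i(x_0)$ and extending linearly yields a graded algebra homomorphism $\Phi\colon H_\dR^*(N)\to \bigwedge^* \R^n$ with $\Phi c\neq 0$ and $\ker \iota^*\cap \K\subset \ker \Phi$.

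For the second step, observe first that $\iota^* c\neq 0$ in $H_\dR^n(M)$; otherwise $c\in \ker\iota^*\cap \K\subset \ker\Psi$, contradicting $\int_{B^n}\Psi c=1$. Since $M$ is a closed, connected, and oriented $n$-manifold, $H_\dR^n(M)\cong \R$, so $\iota^* c$ spans $H_\dR^n(M)$. Now let $a\in H_\dR^k(N)$ satisfy $\iota^* a\neq 0$ in $H_\dR^k(M)$. Poincar\'e duality on $M$ furnishes $\beta\in H_\dR^{n-k}(M)$ with $\int_M \iota^* a\wedge \beta\neq 0$, and the assumed surjectivity of $\iota^*\colon H_\dR^{n-k}(N)\to H_\dR^{n-k}(M)$ provides $b\in H_\dR^{n-k}(N)$ with $\iota^* b=\beta$. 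Then $\iota^*(a\wedge b)\neq 0$ in $H_\dR^n(M)$, so there exists $\mu\in \R\setminus\{0\}$ with $\iota^*(a\wedge b-\mu c)=0$. Since $1\le k\le n-1$ and hence $1\le n-k\le n-1$, both $a\wedge b$ and $\mu c$ lie in $\K$, whence $a\wedge b-\mu c\in \ker\iota^*\cap \K\subset \ker \Phi$. Multiplicativity of $\Phi$ then gives
\[
\Phi a\wedge \Phi b = \Phi(a\wedge b) = \mu\Phi c \neq 0,
\]
so in particular $\Phi a\neq 0$.

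Combining the two steps, we obtain $\ker \Phi|_{H_\dR^k(N)}\subset \ker \iota^*|_{H_\dR^k(N)}$, and consequently
\begin{align*}
\dim \iota^* H_\dR^k(N) &= \dim H_\dR^k(N)-\dim \ker \iota^*|_{H_\dR^k(N)} \\
&\le \dim H_\dR^k(N)-\dim \ker \Phi|_{H_\dR^k(N)} \\
&= \dim \Phi(H_\dR^k(N)) \le \binom{n}{k}.
\end{align*}
The main subtlety is concentrated in the first paragraph: the pointwise evaluation from Theorem~\ref{thm:sup} must be arranged to respect not only multiplicativity but also the kernel inclusion $\ker\iota^*\cap \K\subset \ker \Psi$, which is the hypothesis that couples $\Psi$ to the geometry of $M$. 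Once this inclusion is encoded at $x_0$, the Poincar\'e duality argument on $M$ is direct.
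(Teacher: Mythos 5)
Your proof is correct, but it is organized differently from the paper's. The paper works entirely at the level of the $\cW^*(B_2^n)$-valued operator $\Psi$: it takes a basis $\iota^*c_1,\ldots,\iota^*c_m$ of $\iota^*H_\dR^k(N)$, builds a Poincar\'e-dual basis $\iota^*c_1',\ldots,\iota^*c_m'$ (using the surjectivity hypothesis in degree $n-k$), observes that $c_i\wedge c_\ell' - \delta_{i\ell}\lambda c \in \ker\iota^*\cap\K$ and is therefore killed by $\Psi$, obtains $\Psi c_i\wedge\Psi c_\ell'=\delta_{i\ell}\lambda\Psi c$ in $\cW^*(B_2^n)$, and finishes by invoking Lemma~\ref{lem:dim-bound-subalgebra}, which packages the necessary pointwise evaluation. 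You instead inline that pointwise evaluation: you repeat the Lusin-type argument from the proof of Theorem~\ref{thm:sup} to produce a genuine graded algebra homomorphism $\Phi\colon H_\dR^*(N)\to\bigwedge^*\R^n$ with $\Phi c\neq 0$ and (by including one more finite family of full-measure conditions) $\ker\iota^*\cap\K\subset\ker\Phi$, then argue class-by-class that $\iota^*a\neq 0$ forces $\Phi a\neq 0$, and close with rank--nullity. The two approaches use exactly the same underlying ideas --- Poincar\'e duality on $M$, the surjectivity in degree $n-k$, the kernel inclusion making elements of $\ker\iota^*\cap\K$ invisible, and ultimately $\dim\bigwedge^k\R^n=\binom{n}{k}$ --- but yours trades the appeal to the ready-made Lemma~\ref{lem:dim-bound-subalgebra} for an explicit construction of $\Phi$ and a cleaner linear-algebraic finish. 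This is a fine alternative; it is somewhat longer because it reproves the evaluation step, but it makes visible the additional output that $\Phi$ restricts to an injection on a complement of $\ker\iota^*|_{H_\dR^k(N)}$, which is slightly stronger information than the bare dimension bound. One minor bookkeeping point worth making explicit in your first paragraph: besides requiring $\Psi a_j'(x_0)=0$ for a basis $a_1',\ldots,a_s'$ of $\ker\iota^*\cap\K$, you should also intersect with the full-measure sets on which the fixed Borel representatives are compatible with linear expansions in the chosen basis $c_1,\ldots,c_\nu$ (e.g.\ so that $\Psi a_j'(x_0)=\sum_i\mu_{j,i}\Psi c_i(x_0)$ and $\Psi(c_i\wedge c_\ell)(x_0)$ agrees with the expansion of $c_i\wedge c_\ell$); the paper's proof of Theorem~\ref{thm:sup} uses this silently and the same applies here.
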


As a preparatory step, we recall the following elementary dimension bound for subspaces of $\cW^*(B_2^n)$; see e.g.~\cite[Corollary~4.4]{PR} for a proof.

\begin{lemma} \label{lem:dim-bound-subalgebra}
Let $n\ge 2$ and $1\le k\le n-1$. Let $u_1,\ldots,u_m \in \cW^k(B_2^n)$ and $v_1,\ldots,v_m \in \cW^{n-k}(B_2^n)$ satisfy $u_i \wedge v_\ell = \delta_{i\ell} w$, where $\delta_{i\ell}$ is the Kronecker delta and $0\ne w\in L^1(B_2^n; \bigwedge \R^n)$. Then $m\le \dim \bigwedge^k \R^n$.
\end{lemma}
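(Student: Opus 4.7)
The plan is to reduce the global $L^{n/k}$-statement to pointwise linear algebra at a single point of $B_2^n$, using the witness form $w$. The heart of the argument is that the duality relation $u_i \wedge v_\ell = \delta_{i\ell} w$ forces pointwise linear independence of $u_1(x_0),\ldots, u_m(x_0)$ in $\bigwedge^k \R^n$ at any point $x_0$ where $w(x_0) \ne 0$, and $\dim \bigwedge^k \R^n = \binom{n}{k}$.

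First I would fix pointwise Borel representatives of the forms $u_i$, $v_\ell$, and $w$. Since the $m^2$ equalities $u_i \wedge v_\ell = \delta_{i\ell} w$ hold in $L^1(B_2^n; \bigwedge^n \R^n)$, each holds pointwise almost everywhere. Intersecting these finitely many full-measure sets yields that
\[
E = \{ x \in B_2^n : u_i(x) \wedge v_\ell(x) = \delta_{i\ell} w(x) \text{ for all } i,\ell = 1,\ldots,m \}
\]
has full Lebesgue measure in $B_2^n$. Since $w \ne 0$ in $L^1(B_2^n; \bigwedge^n \R^n)$, the set $\{x \in B_2^n : w(x) \ne 0\}$ has positive measure, so I may pick a point $x_0 \in E$ with $w(x_0) \ne 0$.

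Second, I claim the vectors $u_1(x_0),\ldots, u_m(x_0) \in \bigwedge^k \R^n$ are linearly independent. Given a relation $\sum_{i=1}^m \lambda_i u_i(x_0) = 0$ with $\lambda_1,\ldots,\lambda_m \in \R$, wedging with $v_\ell(x_0)$ and using $x_0 \in E$ gives
\[
0 = \sum_{i=1}^m \lambda_i \, u_i(x_0) \wedge v_\ell(x_0) = \sum_{i=1}^m \lambda_i \delta_{i\ell} \, w(x_0) = \lambda_\ell w(x_0),
\]
which forces $\lambda_\ell = 0$ for each $\ell$ because $w(x_0) \ne 0$. Hence $m \le \dim \bigwedge^k \R^n = \binom{n}{k}$, as claimed.

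The only real bookkeeping is ensuring the Borel representatives are chosen simultaneously for all $u_i$, $v_\ell$ and $w$ so that a common good point $x_0$ exists inside the set where $w$ is nonvanishing; once this is arranged, the linear-algebra step collapses the Sobolev setting to an elementary duality pairing on the finite-dimensional space $\bigwedge^k \R^n$.
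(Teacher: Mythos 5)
Your proof is correct and is essentially the standard pointwise-linear-algebra argument that the paper cites from Prywes (and which is also the device used in the proof of Theorem~\ref{thm:sup} in Section~5: fix Borel representatives, intersect the finitely many full-measure sets where the wedge relations hold, find a point where $w\ne 0$, and conclude linear independence of $u_1(x_0),\ldots,u_m(x_0)$ in $\bigwedge^k\R^n$). No gaps.
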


\begin{proof}[Proof of Proposition~\ref{prop:algebraic-submanifold}]
Let $\iota^* c_1,\ldots,\iota^* c_m$ be a basis of $\iota^* H_\dR^k(N)$. By Poincaré duality, there exist de Rham classes $\iota^* c_1',\ldots,\iota^* c_m' \in H_\dR^{n-k}(M)$ satisfying $\iota^* c_i \wedge \iota^* c_\ell' = \delta_{i\ell} [\vol_M]_\dR$, where $\delta_{i\ell}$ is the Kronecker delta.

Since $\iota^* c\ne 0$, we may fix $\lambda \ne 0$ for which $\lambda \iota^* c = [\vol_M]_\dR$. Then $c_i \wedge c_\ell' - \delta_{i\ell} \lambda c \in \ker \iota^* \cap \, \K$. Thus
\[
\Psi c_i \wedge \Psi c_\ell' = \Psi (c_i \wedge c_\ell') = \Psi (\delta_{i\ell} \lambda c) = \delta_{i\ell} \lambda \Psi c,
\]
which yields $m\le \dim \bigwedge^k \R^n = \binom{n}{k}$ by Lemma~\ref{lem:dim-bound-subalgebra}.
\end{proof}

Combining Theorem~\ref{thm:reformulated-main} and Proposition~\ref{prop:algebraic-submanifold} yields immediately the following variation of Theorem~\ref{thm:submfd}.

\begin{theorem} \label{thm:submfd-with-core}
Let $N$ be a closed, connected, and oriented Riemannian manifold. Let $2\le n\le \dim N$ and let $\omega \in \Omega^n(N)$ be a closed form for which $0\ne [\omega]_\dR \in \K$. Let $F\colon \R^n \to N$ be a quasiregular $\omega$-curve for which $\int_{\R^n} F^* \omega = \infty$ and $\inf_{\R^n} (\norm{\omega} \circ F)>0$. Let $M\subset N$ be a closed, connected, and oriented Riemannian $n$-submanifold satisfying $\iota^*(H_\dR^*(N))=H_\dR^*(M)$ and $\ker \iota^* \cap \, \K \subset \core_\omega(F)$, where $\iota \colon M\to N$ is the inclusion. Then $\dim H_\dR^k(M) \le \binom{n}{k}$ for $k=0,\ldots,n$.
\end{theorem}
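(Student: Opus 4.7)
The plan is simply to chain Theorem~\ref{thm:reformulated-main} and Proposition~\ref{prop:algebraic-submanifold}. Since the hypotheses of Theorem~\ref{thm:reformulated-main} on $F$ and $\omega$ match ours exactly, I would first apply it to extract a graded algebra homomorphism $\Psi\colon H_\dR^*(N) \to \cW^*(B_2^n)$ satisfying $\int_{B^n} \Psi [\omega]_\dR = 1$ and $\core_\omega(F) \cap \K \subset \ker \Psi$. The standing hypothesis $\ker \iota^* \cap \K \subset \core_\omega(F)$ then immediately yields
\[
\ker \iota^* \cap \K \;\subset\; \ker \Psi,
\]
which is precisely the algebraic input needed to feed into Proposition~\ref{prop:algebraic-submanifold}.

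Next, for each index $1\le k\le n-1$, the assumption $\iota^*(H_\dR^*(N)) = H_\dR^*(M)$ furnishes surjectivity of $\iota^*\colon H_\dR^{n-k}(N) \to H_\dR^{n-k}(M)$, so Proposition~\ref{prop:algebraic-submanifold}, applied with $c = [\omega]_\dR$, delivers $\dim \iota^* H_\dR^k(N) \le \binom{n}{k}$. Using the surjectivity of $\iota^*$ in degree $k$ as well, $\iota^* H_\dR^k(N) = H_\dR^k(M)$, so $\dim H_\dR^k(M)\le\binom{n}{k}$ in this range. The boundary cases $k=0$ and $k=n$ are automatic from $M$ being closed, connected, and oriented of dimension $n$: both $H_\dR^0(M)$ and $H_\dR^n(M)$ are one-dimensional, matching $\binom{n}{0} = \binom{n}{n} = 1$.

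Since the whole argument is a direct composition of two results already proved earlier in the paper, I do not expect any genuine obstacle. The only sanity check worth flagging is that the chain $\ker\iota^* \cap \K \subset \core_\omega(F) \cap \K \subset \ker\Psi$ passes cleanly through the intersections with $\K$, which it does because $\ker \iota^* \cap \K$ is itself contained in $\K$ so the intermediate intersection with $\K$ is harmless.
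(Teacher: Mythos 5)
Your proof is correct and is exactly the argument the paper intends: it explicitly states that Theorem~\ref{thm:submfd-with-core} follows immediately by combining Theorem~\ref{thm:reformulated-main} and Proposition~\ref{prop:algebraic-submanifold}. Your handling of the containment chain, the surjectivity in each degree, and the trivial boundary degrees $k=0,n$ matches what the paper leaves to the reader.
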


The following example shows that the assumption $\ker \iota^* \cap \, \K \subset \core_\omega(F)$ cannot be replaced by a weaker assumption $\iota^* [\omega]_\dR \ne 0$ in Theorem~\ref{thm:submfd-with-core}.

\begin{example} \label{ex:constant-in-one-factor}
Let $n\ge 2$. Let $M$ and $P$ be closed, connected, and oriented Riemannian manifolds with $\dim M=n$ and $\dim P\ge n$. Let $\xi \in \Omega^n(P)$ be a closed non-exact form. Let $f\colon \R^n \to M$ be a constant map and let $g\colon \R^n \to P$ be a quasiregular $\xi$-curve satisfying $\int_{\R^n} g^* \xi = \infty$.

Let $N=M\times P$, $\omega = \pi_M^* \vol_M + \pi_P^* \xi$, and $F=(f,g)\colon \R^n \to N$. Then $F$ is a quasiregular $\omega$-curve with $\int_{\R^n} F^* \omega = \infty$.

Let also $\iota \colon M\to N$ be an inclusion $m\mapsto (m,p)$ for some fixed $p\in P$. Then $\iota^* \colon H_\dR^*(N) \to H_\dR^*(M)$ is surjective and $\iota^* [\omega]_\dR = [\vol_M]_\dR \ne 0$.
\end{example}

Compare now Example~\ref{ex:constant-in-one-factor} with the choices $M=\#^2 (\bS^1 \times \bS^1)$, $P=\bS^1 \times \bS^1$, $\xi = \vol_P$, and $g(x,y)=(e^{2\pi ix},e^{2\pi iy})$ to Theorem~\ref{thm:submfd-with-core}.

Considering further the case, where the ambient manifold is a product manifold and the studied submanifold is one of the factors, we obtain the following result.

\begin{corollary}
Let $n\ge 2$. Let $M$ and $P$ be closed, connected, and oriented Riemannian manifolds with $\dim M=n$ and $\dim P\ge n$. Let $\xi \in \Omega^n(P)$ be a closed form for which $[\xi]_\dR \in K^n(P)$. Let $N=M\times P$ and let $\omega = \pi_M^* \vol_M + \pi_P^* \xi$. Let $F\colon \R^n \to N$ be a quasiregular $\omega$-curve for which $\int_{\R^n} F^* \omega = \infty$ and $[\pi_P^* \xi]_\dR \in \core_\omega(F)$. Then there exists a graded algebra homomorphism $\Psi \colon H_\dR^*(M) \to \cW^*(B_2^n)$ satisfying
\[
\int_{B^n} \Psi [\vol_M]_\dR = 1.
\]
\end{corollary}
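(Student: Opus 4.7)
The plan is to apply Theorem~\ref{thm:reformulated-main} to the quasiregular $\omega$-curve $F\colon \R^n \to N$ and then post-compose the resulting homomorphism on $H^*_\dR(N)$ with the pullback $\pi_M^* \colon H^*_\dR(M)\to H^*_\dR(N)$ to obtain the desired $\Psi$ on $H^*_\dR(M)$.

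First I would verify the hypotheses of Theorem~\ref{thm:reformulated-main}. Closedness of $\omega$ is immediate. At any point $(m,p)\in N$, the summands $\pi_M^*\vol_M|_{(m,p)}$ and $\pi_P^*\xi|_{(m,p)}$ lie in the complementary Künneth components $\bigwedge^n T_m^* M$ and $\bigwedge^n T_p^* P$ of $\bigwedge^n T^*_{(m,p)}N$; since $\vol_M$ is nowhere vanishing, these summands cannot cancel, so $\omega$ is nowhere vanishing on $N$, and the compactness of $N$ then upgrades this to $\inf_{\R^n}(\norm{\omega}\circ F)>0$. For the Künneth condition, $[\pi_P^*\xi]_\dR=\pi_P^*[\xi]_\dR\in \K$ because $\pi_P^*$ is a graded algebra homomorphism and $[\xi]_\dR\in K^n(P)$; combined with the (implicit) Poincaré-duality condition $[\vol_M]_\dR\in K^n(M)$ on $M$ (which holds whenever $H^k_\dR(M)\ne 0$ for some $1\le k\le n-1$), this yields $[\omega]_\dR\in \K$.

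Second, Theorem~\ref{thm:reformulated-main} then supplies a graded algebra homomorphism $\Psi'\colon H^*_\dR(N)\to\cW^*(B_2^n)$ with $\int_{B^n}\Psi'[\omega]_\dR=1$ and $\core_\omega(F)\cap \K\subset\ker\Psi'$. The hypothesis $[\pi_P^*\xi]_\dR\in\core_\omega(F)$ together with $[\pi_P^*\xi]_\dR\in \K$ forces $\Psi'[\pi_P^*\xi]_\dR=0$, so by linearity $\Psi'[\pi_M^*\vol_M]_\dR=\Psi'[\omega]_\dR$, and in particular $\int_{B^n}\Psi'[\pi_M^*\vol_M]_\dR=1$.

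Finally, I would set $\Psi:=\Psi'\circ\pi_M^*\colon H^*_\dR(M)\to\cW^*(B_2^n)$. As a composition of graded algebra homomorphisms, $\Psi$ is itself a graded algebra homomorphism, and $\Psi[\vol_M]_\dR=\Psi'[\pi_M^*\vol_M]_\dR$ gives the required normalization. The only delicate point is the verification $[\omega]_\dR\in \K$, which rests on the implicit condition $[\vol_M]_\dR\in K^n(M)$; once Theorem~\ref{thm:reformulated-main} is in hand, the remainder is a direct exercise in functoriality of the pullback and in converting the core hypothesis into the vanishing of $\Psi'$ on the $\pi_P^*\xi$-part of $\omega$.
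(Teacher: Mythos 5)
Your proof follows the same route as the paper: apply Theorem~\ref{thm:reformulated-main} to $F\colon\R^n\to N$, use $[\pi_P^*\xi]_\dR\in\core_\omega(F)\cap\K$ to kill the $\pi_P^*\xi$ contribution, and push forward along $\pi_M^*$. Your extra verification that $\omega$ is nowhere vanishing (so that $\inf_{\R^n}(\norm{\omega}\circ F)>0$ holds by compactness of $N$) is correct and is a hypothesis of Theorem~\ref{thm:reformulated-main} that does genuinely need checking, so that part is welcome.

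The one gap is the case $H_\dR^*(M)=H_\dR^*(\bS^n)$. You flag that the verification $[\omega]_\dR\in\K$ rests on the ``implicit condition'' $[\vol_M]_\dR\in K^n(M)$, but this is not actually part of the corollary's hypotheses and can fail: if $H^k_\dR(M)=0$ for all $1\le k\le n-1$, then $[\vol_M]_\dR\notin K^n(M)$, the Künneth component $[\pi_M^*\vol_M]_\dR\otimes 1$ lies outside $\K$, and hence $[\omega]_\dR\notin\K$, so Theorem~\ref{thm:reformulated-main} does not apply. The paper handles this by first reducing to $H_\dR^*(M)\ne H_\dR^*(\bS^n)$: in the excluded case the only relations in $H_\dR^*(M)$ are $1\cdot 1=1$, $1\cdot[\vol_M]=[\vol_M]$, and $[\vol_M]\wedge[\vol_M]=0$, and one may simply define $\Psi$ by $\Psi(1)=1$ and $\Psi[\vol_M]=\tau$ for any $\tau\in L^1(B_2^n;\bigwedge^n\R^n)$ with $\int_{B^n}\tau=1$; the top-degree product relation is automatic since $\tau\wedge\tau$ is a $2n$-form on $\R^n$. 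You should add this trivial-case reduction to make the proof complete; once that is in place your argument matches the paper's.
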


\begin{proof}
We may assume that $H_\dR^*(M)\ne H_\dR^*(\bS^n)$. Then, by Theorem \ref{thm:reformulated-main}, there exists a graded algebra homomorphism $\Psi \colon H_\dR^*(N) \to \cW^*(B_2^n)$ satisfying
\[
\int_{B^n} \Psi [\omega]_\dR = 1 \quad \text{ and } \quad \core_\omega(F) \cap \K \subset \ker \Psi.
\]
Since $\Psi[\pi_P^* \xi]_\dR=0$, it follows that $\Psi \circ \pi_M^* \colon H_\dR^*(M) \to \cW^*(B_2^n)$ is a graded algebra homomorphism satisfying
\[
\int_{B^n} (\Psi \circ \pi_M^*)[\vol_M]_\dR = \int_{B^n} \Psi [\omega]_\dR = 1.
\]
\end{proof}

\section{Examples of pairs $(N,\omega)$ which are not quasiregularly elliptic} \label{sec:example}

In this section, we show that Theorem~\ref{thm:main} yields obstructions for non-constant quasiregular curves from the Euclidean space to product manifolds containing $\C P^2$ with respect to product forms containing the standard symplectic form on $\C P^2$. In particular, for pairs $(S_\nu \times \C P^2,\pi_1^* \vol_{S_\nu} \wedge \pi_2^* \omega_\sym)$ we obtain an obstruction for $\nu \ge 8$.

\begin{proposition} \label{prop:prod-obstruction-type2}
Let $\nu \ge 8$ and let $S_\nu = \#^\nu(\bS^2 \times \bS^2)$. Let $\omega_\sym \in \Omega^2(\C P^2)$ be the standard symplectic form. Let $F\colon \R^6 \to S_\nu \times \C P^2$ be a quasiregular curve with respect to $\omega_{S_\nu}=\pi_1^* \vol_{S_\nu} \wedge \pi_2^* \omega_\sym$. Then $F$ is constant.
\end{proposition}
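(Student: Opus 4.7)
The plan is to argue by contradiction via Theorem~\ref{thm:main} followed by a Prywes-style linear independence count in $\bigwedge^2 \R^6$. Suppose $F$ is non-constant. I first verify the hypotheses of Theorem~\ref{thm:main} for $N = S_\nu \times \C P^2$ and $\omega = \omega_{S_\nu}$: since both $\vol_{S_\nu}$ and $\omega_\sym$ are non-vanishing, so is $\omega_{S_\nu}$, and the compactness of $N$ gives $\inf_{\R^6}(\norm{\omega_{S_\nu}} \circ F) > 0$; the class $[\omega_{S_\nu}]_\dR = \pi_1^*[\vol_{S_\nu}]_\dR \wedge \pi_2^*[\omega_\sym]_\dR$ is a wedge of de Rham classes of positive degrees $4$ and $2$, hence it lies in $K^6(N)$, and it is non-zero by Künneth; infinite energy holds by Theorem~\ref{thm:signed}, since \cite[Proposition~2.13]{HIMO} writes $\vol_{S_\nu}$ as a sum of wedge products of bounded closed forms on $S_\nu$ in intermediate degrees, and wedging with $\pi_2^* \omega_\sym$ then realizes $\omega_{S_\nu}$ in $\bSpan_\R\!\bigl( \bigcup_{k=1}^{5} \mathcal{Z}^k(N) \wedge \mathcal{Z}^{6-k}(N) \bigr)$. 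Applying Theorem~\ref{thm:main} yields a graded algebra homomorphism $\Phi \colon H_\dR^*(N) \to \bigwedge^* \R^6$ with $\Phi[\omega_{S_\nu}]_\dR \ne 0$.

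Next, I exploit the ring structure of $H_\dR^*(S_\nu)$. Because $S_\nu = \#^\nu(\bS^2 \times \bS^2)$, the intersection form on $H_\dR^2(S_\nu) \cong \R^{2\nu}$ is $\nu$ orthogonal copies of the hyperbolic form, so there is a symplectic basis $e_1, f_1, \ldots, e_\nu, f_\nu$ with $e_i \wedge f_i = [\vol_{S_\nu}]_\dR$ in $H_\dR^4(S_\nu)$ and all other pairwise cup products equal to zero. Setting
\[
E_i = \Phi(\pi_1^* e_i),\quad F_i = \Phi(\pi_1^* f_i) \in {\bigwedge}^2 \R^6,\quad A = \Phi(\pi_1^*[\vol_{S_\nu}]_\dR) \in {\bigwedge}^4 \R^6,
\]
the fact that $\Phi \circ \pi_1^*$ is a graded ring homomorphism gives
\[
E_i \wedge F_j = \delta_{ij}\, A, \qquad E_i \wedge E_j = 0 = F_i \wedge F_j,
\]
while $A \ne 0$ since $A \wedge \Phi(\pi_2^*[\omega_\sym]_\dR) = \Phi[\omega_{S_\nu}]_\dR \ne 0$.

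Finally, the $2\nu$ elements $E_1, F_1, \ldots, E_\nu, F_\nu$ are linearly independent in $\bigwedge^2 \R^6$: given a relation $\sum_i (a_i E_i + b_i F_i) = 0$, wedging with $F_j$ annihilates every term except $a_j A$, and wedging with $E_j$ annihilates every term except $b_j A$, so $A \ne 0$ forces $a_j = b_j = 0$ for all $j$. Hence $2\nu \le \dim \bigwedge^2 \R^6 = 15$, yielding $\nu \le 7$ and contradicting the hypothesis $\nu \ge 8$. The main step requiring care is the decomposition needed to invoke Theorem~\ref{thm:signed}; the rest is a Prywes-type dimension count in $\bigwedge^2 \R^n$, with $n=6$ being exactly what raises the obstruction threshold from the Prywes value $\nu \ge 4$ (for quasiregular maps $\R^4 \to S_\nu$) up to $\nu \ge 8$.
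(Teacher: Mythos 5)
Your proof is correct and follows essentially the same route as the paper: apply Theorem~\ref{thm:signed} to get infinite energy, Theorem~\ref{thm:main} to get a graded algebra homomorphism $\Phi$, and then a dimension count in $\bigwedge^2 \R^6$ against the hyperbolic symplectic basis of $H_\dR^2(S_\nu)$. The only cosmetic difference is that you prove the linear independence of the $2\nu$ images directly by wedging (which uses the extra relations $E_i\wedge E_j = F_i\wedge F_j = 0$), whereas the paper packages the count as Lemma~\ref{lem:algebra-2}, whose proof uses the $\sum_i\lambda_i^2$ trick and needs only $c_i\wedge c_\ell' = \delta_{i\ell}c$; both yield $2\nu\le\binom{6}{2}=15$.
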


Proposition~\ref{prop:prod-obstruction-type2} yields Corollary~\ref{cor:(non-)existence-QRC-type2} as an immediate consequence.

On the other hand, for pairs $(T_g \times \C P^2,\pi_1^* \vol_{T_g} \wedge \pi_2^* \omega_\sym)$ we obtain an obstruction for $g\ge 2$.

\begin{proposition} \label{prop:prod-obstruction-type1}
Let $g\ge 2$ and let $T_g = \#^g(\bS^1 \times \bS^1)$. Let $\omega_\sym \in \Omega^2(\C P^2)$ be the standard symplectic form. Let $F\colon \R^4 \to T_g \times \C P^2$ be a quasiregular curve with respect to $\omega_{T_g}=\pi_1^* \vol_{T_g} \wedge \pi_2^* \omega_\sym$. Then $F$ is constant.
\end{proposition}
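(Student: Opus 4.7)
The plan is to assume that $F$ is non-constant, apply Theorem~\ref{thm:main} to obtain a graded algebra homomorphism $\Phi \colon H_\dR^*(N) \to \bigwedge^* \R^4$ with $\Phi[\omega_{T_g}]_\dR \neq 0$ (where $N = T_g \times \C P^2$), and then derive an algebraic contradiction from the symplectic structure of $H_\dR^1(T_g)$ when $g \geq 2$.

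First I would verify the hypotheses of Theorem~\ref{thm:main}. The form $\omega_{T_g}$ is closed and nowhere vanishing, so compactness of $N$ yields $\inf_{\R^4}(\norm{\omega_{T_g}} \circ F) > 0$. By the Künneth theorem, $[\omega_{T_g}]_\dR = \pi_1^*[\vol_{T_g}]_\dR \wedge \pi_2^*[\omega_\sym]_\dR$ is a non-zero element of $K^4(N)$. Since $\omega_{T_g}$ is the wedge of the smooth bounded closed $2$-forms $\pi_1^* \vol_{T_g}$ and $\pi_2^* \omega_\sym$ on the compact manifold $N$, Theorem~\ref{thm:signed} forces $\int_{\R^4} F^* \omega_{T_g} = \infty$ whenever $F$ is non-constant. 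Thus Theorem~\ref{thm:main} applies and produces $\Phi$.

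Next I would fix a symplectic basis $\alpha_1, \ldots, \alpha_g, \beta_1, \ldots, \beta_g$ of $H_\dR^1(T_g)$ satisfying $\alpha_i \wedge \beta_j = \delta_{ij}[\vol_{T_g}]_\dR$ and $\alpha_i \wedge \alpha_j = \beta_i \wedge \beta_j = 0$ in $H_\dR^2(T_g)$; such a basis exists because the cup product on $H_\dR^1$ of a closed oriented surface is a non-degenerate antisymmetric pairing into $H_\dR^2 \cong \R$. Set $\tilde\alpha_i = \pi_1^* \alpha_i$ and $\tilde\beta_i = \pi_1^* \beta_i$ in $H_\dR^1(N)$. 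Multiplicativity of $\Phi$ transports the vanishing relations to $\Phi(\tilde\alpha_i) \wedge \Phi(\tilde\alpha_j) = \Phi(\tilde\beta_i) \wedge \Phi(\tilde\beta_j) = 0$ in $\bigwedge^2 \R^4$, so the vectors $\Phi(\tilde\alpha_i) \in \R^4$ are pairwise linearly dependent, and similarly for the $\Phi(\tilde\beta_i)$. On the other hand, $\Phi[\omega_{T_g}]_\dR = \Phi(\tilde\alpha_1) \wedge \Phi(\tilde\beta_1) \wedge \Phi(\pi_2^*[\omega_\sym]_\dR) \neq 0$ forces $\Phi(\tilde\alpha_1) \neq 0 \neq \Phi(\tilde\beta_1)$, so there exist scalars $a_i, b_i \in \R$ with $a_1 = b_1 = 1$ such that $\Phi(\tilde\alpha_i) = a_i \Phi(\tilde\alpha_1)$ and $\Phi(\tilde\beta_i) = b_i \Phi(\tilde\beta_1)$.

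Finally, applying $\Phi$ to $\tilde\alpha_i \wedge \tilde\beta_j = \delta_{ij} \tilde\alpha_1 \wedge \tilde\beta_1$ and cancelling the non-zero factor $\Phi(\tilde\alpha_1) \wedge \Phi(\tilde\beta_1)$ gives the rigid system $a_i b_j = \delta_{ij}$ for all $1 \leq i, j \leq g$. When $g \geq 2$, the pair $(i,j) = (1,2)$ yields $b_2 = 0$ while $(i,j) = (2,2)$ demands $a_2 b_2 = 1$, the desired contradiction. The main obstacle I expect is conceptual rather than technical: one must recognize that the Künneth-ideal hypothesis in Theorem~\ref{thm:main} captures not only the decomposition $[\omega_{T_g}]_\dR = \pi_1^*[\vol_{T_g}]_\dR \wedge \pi_2^*[\omega_\sym]_\dR$, but also the deeper symplectic factorizations $[\vol_{T_g}]_\dR = \alpha_i \wedge \beta_i$, and it is the transport of the latter through $\Phi$ into the small target $\bigwedge^* \R^4$ that produces a genuine obstruction for $g \geq 2$.
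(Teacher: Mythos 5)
Your proposal is correct, and the reduction to Theorem~\ref{thm:main} (verifying the hypotheses via compactness and Theorem~\ref{thm:signed}, identifying $[\omega_{T_g}]_\dR$ as a nonzero element of the K\"unneth ideal) matches the paper. However, your algebraic endgame takes a genuinely different route from the paper's. The paper feeds the symplectic basis into the general Lemma~\ref{lem:algebra}: writing $c=\pi_1^*[\vol_{T_g}]_\dR$, $c'=\pi_2^*[\omega_\sym]_\dR$, $c_i=\pi_1^*$(basis of $H^1(T_g)$), the lemma shows that the $\Phi c_i$ must be linearly \emph{independent} in $\bigwedge^1\R^4$, forcing $2g<4$. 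You instead exploit the vanishing relations $\alpha_i\wedge\alpha_j=0$, $\beta_i\wedge\beta_j=0$ to force the opposite conclusion — that the images $\Phi(\tilde\alpha_i)$ are pairwise linearly \emph{dependent} (and similarly the $\Phi(\tilde\beta_i)$), hence all scalar multiples of $\Phi(\tilde\alpha_1)$ and $\Phi(\tilde\beta_1)$ once those are shown nonzero — and then read off the rigid scalar system $a_ib_j=\delta_{ij}$, which is immediately inconsistent for $g\ge 2$. Both arguments are sound; yours is more direct and avoids introducing the auxiliary lemma, but it relies on the degree-one peculiarity that $u\wedge v=0$ in $\bigwedge^2\R^n$ forces linear dependence of $u,v\in\bigwedge^1\R^n$, so it does not transport to the companion Proposition~\ref{prop:prod-obstruction-type2} where the relevant classes live in degree $2$; the paper's Lemma~\ref{lem:algebra}/\ref{lem:algebra-2} framework handles both cases uniformly. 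One small point worth stating explicitly in a final write-up: the deduction that $\Phi(\tilde\alpha_1)\ne 0\ne\Phi(\tilde\beta_1)$ uses that $\Phi[\omega_{T_g}]_\dR = \Phi(\tilde\alpha_1)\wedge\Phi(\tilde\beta_1)\wedge\Phi(\pi_2^*[\omega_\sym]_\dR)$ factors through multiplicativity applied to $[\omega_{T_g}]_\dR = \pi_1^*(\alpha_1\wedge\beta_1)\wedge\pi_2^*[\omega_\sym]_\dR$, which you have already observed.
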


The obstruction in Proposition~\ref{prop:prod-obstruction-type1} is sharp in the following sense.

\begin{proposition} \label{prop:prod-existence-type1}
Let $T_1 = \bS^1 \times \bS^1$ and let $\omega_\sym \in \Omega^2(\C P^2)$ be the standard symplectic form. There exists a non-constant map $F\colon \R^4 \to T_1 \times \C P^2$ which is a quasiregular curve with respect to $\omega_{T_1}=\pi_1^* \vol_{T_1} \wedge \pi_2^* \omega_\sym$.
\end{proposition}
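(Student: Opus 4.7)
The idea is to take the product of two universal covers. Fix a smooth cubic curve $L \subset \C P^2$; by the adjunction formula $L$ has genus $1$, hence is an elliptic curve and admits a holomorphic universal covering $\pi \colon \C \to L$. Let $\iota \colon L \hookrightarrow \C P^2$ denote the inclusion, set $g := \iota \circ \pi$, and let $f \colon \R^2 \to T_1$ be an isometric universal covering of the flat torus. Under the identification $\R^4 = \R^2 \times \R^2$, define
\[
F(z_1, z_2) := (f(z_1), g(z_2)).
\]
Then $F$ is smooth and non-constant, and I would verify that it is a quasiregular $\omega_{T_1}$-curve as follows.

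Since $\pi$ is a holomorphic local diffeomorphism between Riemann surfaces, the pull-back by $\pi$ of the Fubini--Study metric restricted to $L$ is conformally flat on $\C$, of the form $\rho(z)^2|dz|^2$ with $\rho$ smooth and positive. Invariance under the deck-transformation lattice $\Lambda \subset \C$ makes $\rho$ a $\Lambda$-periodic function which descends to a positive continuous function on the compact torus $L$, so $0 < \rho_{\min} \le \rho \le \rho_{\max} < \infty$. Because $L$ is a K\"ahler submanifold of $\C P^2$ of complex dimension one, the restriction $\omega_\sym|_L$ coincides with the Riemannian volume form $\vol_L$, whence $g^*\omega_\sym = \pi^*\vol_L = \rho(z_2)^2\, dx_2 \wedge dy_2$. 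Combined with $f^*\vol_{T_1} = dx_1 \wedge dy_1$, this yields $F^*\omega_{T_1} = \rho(z_2)^2\, dx_1 \wedge dy_1 \wedge dx_2 \wedge dy_2$, so $\star F^*\omega_{T_1} = \rho(z_2)^2 \ge \rho_{\min}^2 > 0$.

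Since $DF = Df \oplus Dg$ is block-diagonal in the splitting $\R^2 \oplus \R^2$, one has $\|DF\| = \max(\|Df\|, \|Dg\|) = \max(1, \rho(z_2))$. The comass $\|\omega_{T_1}\|$ equals $1$ throughout $T_1 \times \C P^2$, because the comass of a wedge of forms supported on orthogonal subspaces factors as the product of the individual comasses. Setting $K := \max(\rho_{\max}^2, \rho_{\min}^{-2})$, a case analysis on whether $\rho(z_2) \ge 1$ or $\rho(z_2) < 1$ shows that $\max(1, \rho(z_2)^4) \le K\rho(z_2)^2$, which is precisely the $K$-quasiregular $\omega_{T_1}$-curve inequality for $F$. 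The only delicate point is the two-sided bound on $\rho$; this is immediate from the continuity of $\rho$ and its $\Lambda$-periodicity on $\C$, and is what makes the naive ``product of covers'' construction work here, whereas the same ansatz fails for $T_g \times \C P^2$ with $g \ge 2$.
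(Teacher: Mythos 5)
Your proof is correct and takes a genuinely different route from the paper. The paper factors through $\C P^1$: it takes an Alexander map $A\colon \R^2 \to \C P^1$ (a piecewise-linear BLD branched cover of the sphere, cited from Iwaniec--Martin), composes with the linear inclusion $\C P^1 \hookrightarrow \C P^2$, pairs this with the exponential covering $\R^2 \to T_1$, and invokes a general lemma (the paper's Lemma~\ref{lem:product-of-uniform-is-qr}) showing that products of such ``uniform'' coordinate maps are quasiregular curves. You instead factor through an elliptic cubic $L\subset \C P^2$: the holomorphic universal cover $\pi\colon\C\to L$ is an honest (unbranched) covering, the conformal factor $\rho$ of $\pi^*(g_{\mathrm{FS}}|_L)$ is $\Lambda$-periodic and hence pinched between positive constants, and the product with the flat covering $\R^2\to T_1$ is verified directly. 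Both constructions hinge on the same two ingredients — a map $\R^2\to\C P^2$ realizing $\omega_\sym$ with uniform bounds, and the compactness-based bound on the conformal factor — but your map is actually \emph{holomorphic}, so the distortion analysis is cleaner, whereas the Alexander-map route is more flexible (it does not require the existence of an appropriately embedded genus-one holomorphic curve) and plugs directly into the paper's general product lemma, which is reused elsewhere. One small point worth recording explicitly in your write-up: the identity $\|\pi_1^*\vol_{T_1}\wedge\pi_2^*\omega_\sym\|=1$ is most cleanly seen by applying the Hodge star in the $6$-dimensional tangent space — $\star(e^{12}\wedge(e^{34}+e^{56}))=e^{34}+e^{56}$, which has comass $1$ by Wirtinger — since in general the comass of a wedge product is \emph{not} multiplicative (consider $\zeta\wedge\zeta$ for the Kähler $2$-form $\zeta$ on $\R^4$, which has comass $2$). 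The claim is true here precisely because the first factor is a simple unit $2$-covector on a subspace orthogonal to the support of the second, and the paper's Lemma~\ref{lem:product-of-uniform-is-qr} implicitly relies on the same normalization hypothesis $\|\pi_1^*\omega_1\wedge\cdots\wedge\pi_m^*\omega_m\|\le 1$.
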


Propositions~\ref{prop:prod-obstruction-type1} and \ref{prop:prod-existence-type1} together yield immediately Corollary~\ref{cor:(non-)existence-QRC-type1}.

Before discussing the proofs of Propositions~\ref{prop:prod-obstruction-type2}, \ref{prop:prod-obstruction-type1}, and \ref{prop:prod-existence-type1}, we briefly recall the definition of the standard symplectic form on a complex projective space. For $m\ge 1$, \emph{the standard symplectic form on $\C P^m$} is the unique form $\omega_\sym \in \Omega^2(\C P^m)$ satisfying
\[
U^* \omega_\sym = \omega_\sym
\]
for each unitary $(m+1)\times (m+1)$ matrix $U\in U(m+1)$ and
\[
\iota^* \omega_\sym = \vol_{\C P^1},
\]
where $\iota \colon \C P^1 \to \C P^m$ is the inclusion $[z_0 : z_1] \mapsto [z_0 : z_1 : 0 : \cdots : 0]$ in homogeneous coordinates; see e.g.~\cite[Section 3.3.2]{VO} for an explicit formula of $\omega_\sym$.

\subsection*{Construction of a quasiregular $\omega_{T_1}$-curve $\R^4 \to T_1 \times \C P^2$}

In preparation for the proof of Proposition~\ref{prop:prod-existence-type1}, we show that the natural inclusion $\C P^1 \hookrightarrow \C P^2$ is a quasiregular curve with respect to the standard symplectic form.

\begin{lemma} \label{lem:inclusion}
Let $1\le i\le \ell$ and let $\iota \colon \C P^i \to \C P^\ell$ be the inclusion $[z_0 : \cdots : z_i] \mapsto [z_0 : \cdots : z_i : 0 : \cdots : 0]$. Let $\omega_\sym \in \Omega^2(\C P^\ell)$ be the standard symplectic form on $\C P^\ell$. Then $\iota$ is a $1$-quasiregular curve with respect to $\omega_\sym^{\wedge i}=\omega_\sym \wedge \cdots \wedge \omega_\sym \in \Omega^{2i}(\C P^\ell)$.
\end{lemma}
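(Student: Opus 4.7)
The strategy is to verify the pointwise quasiregularity inequality directly, exploiting the fact that $\iota$ is an isometric embedding of Kähler manifolds that respects the standard symplectic forms.

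First, I would identify $\iota^*\omega_\sym$ with the standard symplectic form $\omega_\sym^i$ on $\mathbb{C}P^i$. This follows from the two characterizing properties recalled just before the lemma: the form $\iota^*\omega_\sym$ is invariant under $U(i+1) \hookrightarrow U(\ell+1)$ (acting on the first $i+1$ homogeneous coordinates), and its restriction along the inclusion $\mathbb{C}P^1\hookrightarrow \mathbb{C}P^i$ agrees with $\vol_{\mathbb{C}P^1}$, because the composition $\mathbb{C}P^1\hookrightarrow \mathbb{C}P^i\hookrightarrow \mathbb{C}P^\ell$ is the standard inclusion into $\mathbb{C}P^\ell$. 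By the uniqueness statement, $\iota^*\omega_\sym = \omega_\sym^i$. Consequently $\iota^*(\omega_\sym^{\wedge i}) = (\omega_\sym^i)^{\wedge i}$.

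Next I would use that $(\mathbb{C}P^i,\omega_\sym^i)$ is a Kähler manifold of complex dimension $i$ with the Fubini--Study metric, and that the Fubini--Study volume form satisfies the standard Kähler identity $(\omega_\sym^i)^{\wedge i} = i!\,\vol_{\mathbb{C}P^i}$. Hence $\star\,\iota^*(\omega_\sym^{\wedge i}) = i!$ pointwise. On the other side of the inequality, $\iota$ is a Riemannian isometric embedding (the Fubini--Study metric on $\mathbb{C}P^i$ is the restriction of the one on $\mathbb{C}P^\ell$), so $\lVert D\iota\rVert \equiv 1$.

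It remains to compute the pointwise comass norm $\lVert\omega_\sym^{\wedge i}\rVert\circ \iota$. Here I would invoke Wirtinger's inequality in the form: for the Kähler form $\omega_\sym$ of $\mathbb{C}P^\ell$ and any orthonormal $2i$-tuple $v_1,\dots,v_{2i}\in T_x\mathbb{C}P^\ell$, one has $\omega_\sym^{\wedge i}(v_1,\dots,v_{2i})\leq i!$, with equality exactly when $\bSpan(v_1,\dots,v_{2i})$ is a complex $i$-plane. Thus $\lVert\omega_\sym^{\wedge i}(x)\rVert = i!$ for every $x$. Substituting into the quasiregular-curve inequality gives
\[
(\lVert \omega_\sym^{\wedge i}\rVert\circ \iota)\,\lVert D\iota\rVert^{2i} = i! = \star\,\iota^*(\omega_\sym^{\wedge i}),
\]
so $\iota$ is $1$-quasiregular, and since $\iota$ is smooth it trivially lies in $W^{1,2i}_{\loc}$.

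The only real content is the identification $\iota^*\omega_\sym = \omega_\sym^i$ and the Wirtinger comass computation; everything else is a direct substitution. I expect the sharpness-in-Wirtinger argument (showing that pointwise equality is actually attained at points of $\iota(\mathbb{C}P^i)$, so that the comass norm equals $i!$ rather than a smaller constant) to be the only nontrivial step, and it is handled by noting that the tangent space to $\iota(\mathbb{C}P^i)$ at every point is a complex $i$-plane in $T\mathbb{C}P^\ell$, which is precisely the equality case of Wirtinger.
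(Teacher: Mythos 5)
Your proof is correct and takes essentially the same route as the paper, which simply observes that $\iota^*\omega_\sym$ is the standard symplectic form on $\C P^i$ and asserts that the claim ``follows immediately.'' Your write-up fills in precisely what that immediacy consists of: the Wirtinger comass computation $\norm{\omega_\sym^{\wedge i}}\equiv i!$ (with equality on complex $i$-planes, hence on $T\iota(\C P^i)$), the Kähler identity $\iota^*(\omega_\sym^{\wedge i})=i!\,\vol_{\C P^i}$, and $\norm{D\iota}\equiv 1$ from the Fubini--Study isometry, so both sides of the quasiregularity inequality equal $i!$.
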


\begin{proof}
The claim follows immediately since $\iota^* \omega_\sym \in \Omega^2(\C P^i)$ is the standard symplectic form on $\C P^i$.
\end{proof}

We also show that mappings of bounded length distortion $\R^n \to N_i$ induce quasiregular curves $(\R^n)^m \to N_1 \times \cdots \times N_m$; we refer to \cite{MV} and \cite{HR} for more discussion on BLD-maps.

\begin{lemma} \label{lem:product-of-uniform-is-qr}
Let $n\ge 2$ and let $N_1,\ldots,N_m$ be connected and oriented Riemannian manifolds with $\dim N_i \ge n$. Let $\omega_i \in \Omega^n(N_i)$ be closed forms satisfying $\norm{\pi_1^* \omega_1 \wedge \cdots \wedge \pi_m^* \omega_m} \le 1$. Let $L\ge m$ and let $\psi_i \in W_\loc^{1,n}(\R^n,N_i)$ be maps satisfying $\norm{D\psi_i} \le L$ a.e.~in $\R^n$ and $\star \psi_i^* \omega_i \ge L^{-n}$ a.e.~in $\R^n$. Then the map $\psi \colon (\R^n)^m \to N_1 \times \cdots \times N_m$, $(x_1,\ldots,x_m)\mapsto (\psi_1(x_1),\ldots,\psi_m(x_m))$, is a $(L^{4n})^m$-quasiregular curve with respect to $\pi_1^* \omega_1 \wedge \cdots \wedge \pi_m^* \omega_m$.
\end{lemma}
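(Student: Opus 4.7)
The proof is essentially a bookkeeping exercise that exploits the product structure of both the domain and the form. The goal is to verify the two defining conditions of a quasiregular curve, namely Sobolev regularity and the almost-everywhere distortion inequality
\[
(\norm{\pi_1^* \omega_1 \wedge \cdots \wedge \pi_m^* \omega_m}\circ \psi)\,\norm{D\psi}^{nm} \le (L^{4n})^m\,\bigl(\star \psi^*(\pi_1^* \omega_1 \wedge \cdots \wedge \pi_m^* \omega_m)\bigr)
\]
on $(\R^n)^m$, whose domain has dimension $nm$.

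The plan is to proceed in three steps. First I would bound the operator norm of $D\psi$. At a point $x=(x_1,\ldots,x_m)\in (\R^n)^m$ the differential $D\psi(x)$ is block-diagonal with blocks $D\psi_i(x_i)$, so for any unit vector $v=(v_1,\ldots,v_m)$,
\[
\abs{D\psi(x) v}^2 = \sum_{i=1}^m \abs{D\psi_i(x_i) v_i}^2 \le L^2 \sum_{i=1}^m \abs{v_i}^2 = L^2,
\]
giving $\norm{D\psi}\le L$ almost everywhere. In particular, $\psi$ is locally Lipschitz, hence $\psi \in W^{1,nm}_\loc((\R^n)^m, N_1 \times \cdots \times N_m)$.

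Second, I would compute the pull-back. Denote by $p_i\colon (\R^n)^m \to \R^n$ the coordinate projections. Since $\pi_i \circ \psi = \psi_i \circ p_i$, we have $\psi^* \pi_i^* \omega_i = p_i^* \psi_i^* \omega_i$. Writing each top-dimensional form $\psi_i^* \omega_i = (\star \psi_i^* \omega_i)\vol_{\R^n}$ and noting that $p_1^* \vol_{\R^n} \wedge \cdots \wedge p_m^* \vol_{\R^n} = \vol_{(\R^n)^m}$ under the product orientation, we obtain
\[
\psi^*(\pi_1^* \omega_1 \wedge \cdots \wedge \pi_m^* \omega_m) = \prod_{i=1}^m \bigl((\star \psi_i^* \omega_i)\circ p_i\bigr)\,\vol_{(\R^n)^m},
\]
so that $\star \psi^*(\pi_1^* \omega_1 \wedge \cdots \wedge \pi_m^* \omega_m)(x) \ge L^{-nm}$ almost everywhere.

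Third, I would combine the two bounds with the hypothesis $\norm{\pi_1^* \omega_1 \wedge \cdots \wedge \pi_m^* \omega_m}\le 1$. This gives pointwise a.e.
\[
(\norm{\pi_1^* \omega_1 \wedge \cdots \wedge \pi_m^* \omega_m}\circ \psi)\,\norm{D\psi}^{nm} \le L^{nm} = L^{2nm}\cdot L^{-nm} \le L^{4nm}\,\bigl(\star \psi^*(\pi_1^* \omega_1 \wedge \cdots \wedge \pi_m^* \omega_m)\bigr),
\]
where the last inequality uses $L\ge m\ge 1$. Since $L^{4nm}=(L^{4n})^m$, this is precisely the desired $(L^{4n})^m$-quasiregular curve inequality. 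There is no real obstacle here; the only point requiring a moment's care is keeping straight the difference between the individual exponent $n$ (governing each factor $\psi_i$) and the ambient exponent $nm$ (governing $\psi$), which is what produces the factor $L^{4nm}$ in the distortion constant.
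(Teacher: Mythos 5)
Your proof is correct and takes essentially the same approach as the paper: bound $\norm{D\psi}$ using the block-diagonal structure, identify $\star\psi^*(\pi_1^*\omega_1\wedge\cdots\wedge\pi_m^*\omega_m)$ with the product $\prod_i(\star\psi_i^*\omega_i)\circ p_i$, and combine. The only real difference is that you use the sharp bound $\norm{D\psi}\le L$ coming from the block-diagonal form of $D\psi$, whereas the paper uses the cruder bound $\norm{D\psi}\le\sum_i\norm{D\psi_i}\le mL$ (hence its appeal to the power-mean inequality and $m\le L$), so your argument is a little cleaner but lands at the same constant.
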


\begin{proof}
For a.e.~$x=(x_1,\ldots,x_m)\in (\R^n)^m$, we have
\begin{align*}
\norm{D\psi(x)}^{nm} &\le \left( \sum_{i=1}^m \norm{D\psi_i(x_i)} \right)^{nm} \le m^{nm} \sum_{i=1}^m \norm{D\psi_i(x_i)}^{nm} \\
&\le m^{nm+1} L^{nm} \le L^{3nm} \le L^{4nm} \prod_{i=1}^m (\star \psi_i^* \omega_i)(x_i) \\
&= L^{4nm} (\star \psi^* (\pi_1^* \omega_1 \wedge \cdots \wedge \pi_m^* \omega_m))(x).
\end{align*}
Since $(\pi_1^* \omega_1 \wedge \cdots \wedge \pi_m^* \omega_m)(\psi(x))\ne 0$ for a.e.~$x\in (\R^n)^m$, the claim follows.
\end{proof}

We are now ready to prove Proposition~\ref{prop:prod-existence-type1}.

\begin{proof}[Proof of Proposition~\ref{prop:prod-existence-type1}]
Let $\exp \colon \R^2 \to T_1$ be the covering map $(x,y)\mapsto (e^{2\pi ix},e^{2\pi iy})$. Let $A\colon \R^2 \to \C P^1$ be an Alexander map (for the construction see e.g.~\cite[Section 6.5.4]{Iw-Ma-book}) and let $L\ge 1$ a constant satisfying $\norm{DA} \le L$ a.e.~in $\R^2$ and $\star A^* \vol_{\C P^1} \ge L^{-2}$ a.e.~in $\R^2$. Let $\iota \colon \C P^1 \to \C P^2$ be the inclusion $[z_0 : z_1]\mapsto [z_0 : z_1 : 0]$. Then, by Lemma~\ref{lem:inclusion},
\[
\norm{D(\iota \circ A)} \le \norm{D\iota \circ A} \norm{DA} \le L
\]
a.e.~in $\R^2$ and
\[
\star(\iota \circ A)^* \omega_\sym = \star A^* \iota^* \omega_\sym = \star A^* \vol_{\C P^1} \ge L^{-2}
\]
a.e.~in $\R^2$. Thus, by Lemma~\ref{lem:product-of-uniform-is-qr}, the map
\[
F\colon \R^4 \to T_1 \times \C P^2, \quad (x,y,z,w)\mapsto (\exp(x,y),\iota(A(z,w))),
\]
is a quasiregular curve with respect to $\pi_1^* \vol_{T_1} \wedge \pi_2^* \omega_\sym$. Since $F$ is non-constant, this concludes the proof.
\end{proof}

\subsection*{Obstruction for quasiregular $\omega_{T_g}$-curves $\R^4 \to T_g \times \C P^2$}

The quasiregular $\omega_{T_1}$-curve $\R^4 \to T_1 \times \C P^2$ constructed in the proof of Proposition~\ref{prop:prod-existence-type1} arose from a pair $(\exp,\iota \circ A)$, where $\exp \colon \R^2 \to T_1$ and $\iota \circ A\colon \R^2 \to \C P^2$ are quasiregular maps. By the classical quasiconformal theory, there does not exist a quasiregular map $\R^2 \to T_g$ if $g\ge 2$, so a priori Proposition~\ref{prop:prod-obstruction-type1}, we know that a quasiregular $\omega_{T_g}$-curve $\R^4 \to T_g \times \C P^2$ cannot be constructed using quasiregular coordinate maps. Proposition~\ref{prop:prod-obstruction-type1} shows that there does not exist any method of constructing a non-constant quasiregular $\omega_{T_g}$-curve $\R^4 \to T_g \times \C P^2$.

The proof of Proposition~\ref{prop:prod-obstruction-type1} is based on combining the following elementary algebraic lemma with Theorem~\ref{thm:main}.

\begin{lemma} \label{lem:algebra}
Let $N$ be a smooth manifold and let $\Phi \colon H_\dR^*(N) \to \bigwedge^* \R^n$ be a graded algebra homomorphism. Let $k,k'>0$. Suppose $c\in H_\dR^k(N)$ and $c'\in H_\dR^{k'}(N)$ are de Rham classes satisfying $\Phi c\wedge \Phi c'\ne 0$. Suppose $c_1,\ldots,c_m \in H_\dR^1(N)$ are de Rham classes for which $c\wedge c_i=0$ for $i=1,\ldots,m$. Suppose $c_1',\ldots,c_m' \in H_\dR^{k-1}(N)$ are de Rham classes satisfying $c_i \wedge c_\ell' = \delta_{i\ell}c$, where $\delta_{i\ell}c$ is the Kronecker delta. Then $m<n$.
\end{lemma}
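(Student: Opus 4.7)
The plan is to transport the entire hypothesis across $\Phi$ and then argue inside the exterior algebra $\bigwedge^* \R^n$. Set $\alpha := \Phi c \in \bigwedge^k \R^n$, $u_i := \Phi c_i \in \R^n$, and $v_i := \Phi c_i' \in \bigwedge^{k-1} \R^n$. Applying the fact that $\Phi$ is a graded algebra homomorphism, the hypotheses translate to: $\alpha \ne 0$ (since $\alpha \wedge \Phi c' \ne 0$), $u_i \wedge \alpha = 0$ for every $i$, and $u_i \wedge v_\ell = \delta_{i\ell}\, \alpha$.

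First I would show that $u_1,\ldots,u_m$ are linearly independent in $\R^n$. Suppose $\sum_i \lambda_i u_i = 0$. Wedging with $v_\ell$ yields
\[
0 = \sum_{i=1}^m \lambda_i\, u_i \wedge v_\ell = \sum_{i=1}^m \lambda_i \delta_{i\ell}\, \alpha = \lambda_\ell\, \alpha,
\]
and since $\alpha \ne 0$, we conclude $\lambda_\ell = 0$ for each $\ell$. Thus the vectors $u_1,\ldots,u_m$ span an $m$-dimensional subspace $V \subset \R^n$ lying in the annihilator
\[
W := \{ u \in \R^n \colon u \wedge \alpha = 0 \}.
\]

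The key remaining step is the standard exterior algebra bound $\dim W \le k$ for nonzero $\alpha \in \bigwedge^k \R^n$. I would prove this by choosing a basis $w_1,\ldots,w_d$ of $W$, extending to a basis $w_1,\ldots,w_n$ of $\R^n$, and expanding $\alpha = \sum_{|I|=k} a_I w_I$. The condition $w_i \wedge \alpha = 0$ for $i \le d$ forces $a_I = 0$ whenever $i \notin I$, and imposing this for all $i \in \{1,\ldots,d\}$ shows $a_I \ne 0$ only when $\{1,\ldots,d\} \subset I$, which in turn requires $d \le k$ lest $\alpha = 0$.

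Finally, since $\Phi c \wedge \Phi c' \ne 0$ in $\bigwedge^{k+k'} \R^n$, we must have $k+k' \le n$; combined with the assumption $k' > 0$ this gives $k \le n-1$. Chaining the inequalities,
\[
m = \dim V \le \dim W \le k \le n-1 < n,
\]
which is the desired bound. I do not anticipate a serious obstacle: the work is essentially bookkeeping within exterior algebra once the hypotheses are pushed through $\Phi$, with the only nontrivial ingredient being the annihilator dimension estimate above.
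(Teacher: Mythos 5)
Your proof is correct, but it takes a genuinely different route than the paper's. The paper first establishes the linear independence of $\Phi c_1,\ldots,\Phi c_m$ (as you do, though by wedging against a full sum $\sum_\ell \lambda_\ell \Phi c'_\ell$ rather than a single $\Phi c'_\ell$), obtaining $m \le n$; it then assumes $m=n$, observes that $\{\Phi c_{i_1} \wedge \cdots \wedge \Phi c_{i_{k'}}\}$ would then span $\bigwedge^{k'}\R^n$, expands $\Phi c'$ in that basis, and derives a contradiction with $\Phi c \wedge \Phi c' \ne 0$ via $\Phi(c\wedge c_{i_1}\wedge\cdots)=0$. You instead observe that the $\Phi c_i$ lie in the annihilator $W=\{u : u\wedge \Phi c = 0\}$ of the nonzero $k$-vector $\Phi c$, invoke the classical bound $\dim W \le k$, and close with $k+k'\le n$ and $k'\ge 1$. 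Your argument is arguably cleaner and yields the strictly sharper intermediate estimate $m \le k$ (not just $m<n$), which would even tighten the numerology in the application to $T_g \times \C P^2$; the paper's argument, by contrast, sidesteps the annihilator-dimension lemma and uses only the spanning property of a basis of $\bigwedge^1\R^n$. Both correctly use all the hypotheses. One small presentational note: when you say the hypothesis translates to $u_i\wedge\alpha=0$, you should flag that this follows from $\Phi(c\wedge c_i)=\Phi c\wedge\Phi c_i$ up to the sign $(-1)^k$, which is immaterial; otherwise the argument is complete.
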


\begin{proof} For every $(\lambda_1,\ldots,\lambda_m)\in \R^m \setminus \{0\}$, we have
\begin{align*}
\left( \sum_{i=1}^m \lambda_i \Phi c_i \right) \wedge \left( \sum_{\ell=1}^m \lambda_\ell \Phi c_\ell' \right) &= \sum_{i,\ell=1}^m \lambda_i \lambda_\ell \Phi c_i \wedge \Phi c_\ell' =  \sum_{i,\ell=1}^m \lambda_i \lambda_\ell \Phi(c_i \wedge c_\ell') \\
&= \sum_{i=1}^m \lambda_i^2 \Phi c = \left( \sum_{i=1}^m \lambda_i^2 \right) \Phi c \ne 0,
\end{align*}
so $\sum_{i=1}^m \lambda_i \Phi c_i \ne 0$. We conclude that $\Phi c_1,\ldots,\Phi c_m \in \bigwedge^1 \R^n$ are linearly independent, which yields that $m\le \dim \bigwedge^1 \R^n = n$.

Suppose now that $m=n$. Then $\Phi c_1,\ldots,\Phi c_m$ is a basis of $\bigwedge^1 \R^n$ and hence $\{\Phi c_{i_1} \wedge \cdots \wedge \Phi c_{i_{k'}}\}_I$ is a basis of $\bigwedge^{k'} \R^n$. Since $\Phi c'\in \bigwedge^{k'} \R^n$, there exist coefficients $\mu_I \in \R$ satisfying
\[
\Phi c' = \sum_I \mu_I \Phi c_{i_1} \wedge \cdots \wedge \Phi c_{i_{k'}}.
\]
Hence, we arrive at a contradiction
\begin{align*}
\Phi c\wedge \Phi c' &= \Phi c \wedge \left( \sum_I \mu_I \Phi c_{i_1} \wedge \cdots \wedge \Phi c_{i_{k'}} \right) = \sum_I \mu_I \Phi c \wedge \Phi c_{i_1} \wedge \cdots \wedge \Phi c_{i_{k'}} \\
&= \sum_I \mu_I \Phi (c \wedge c_{i_1} \wedge \cdots \wedge c_{i_{k'}}) = 0.
\end{align*}
\end{proof}

We are now ready to prove Proposition~\ref{prop:prod-obstruction-type1}.

\begin{proof}[Proof of Proposition~\ref{prop:prod-obstruction-type1}]
Suppose that $F$ is non-constant. By Theorem~\ref{thm:signed}, we have $\int_{\R^n} F^* \omega_{T_g} = \infty$. Thus, by Theorem~\ref{thm:main}, there exists a graded algebra homomorphism $\Phi \colon H_\dR^*(T_g \times \C P^2) \to \bigwedge^* \R^4$ for which $\Phi [\omega_{T_g}]_\dR \ne 0$.

Let $c_1,\ldots,c_{2g}$ be basis of $H_\dR^1(T_g)$ satisfying $c_i \wedge c_{i+1} = [\vol_{T_g}]_\dR$ for $i=1,3,\ldots,2g-1$ and $c_i \wedge c_\ell = 0$ for every pair $(i,\ell)\in \{1,\ldots,2g\}^2$ with $|2\ell-2i-1|>1$. Denote $c_i' = c_{i+1}$ for $i=1,3,\ldots,2g-1$ and $c_i' = -c_{i-1}$ for $i=2,4,\ldots,2g$. Since
\[
\Phi \pi_1^* [\vol_{T_g}]_\dR \wedge \Phi \pi_2^* [\omega_\sym]_\dR = \Phi [\omega_{T_g}]_\dR \ne 0,
\]
\[
\pi_1^* [\vol_{T_g}]_\dR \wedge \pi_1^* c_i = \pi_1^* ([\vol_{T_g}]_\dR \wedge c_i) = 0,
\]
and
\[
\pi_1^* c_i \wedge \pi_1^* c_\ell' = \pi_1^* (c_i \wedge c_\ell') = \pi_1^* (\delta_{i\ell} [\vol_{T_g}]_\dR) = \delta_{i\ell} \pi_1^* [\vol_{T_g}]_\dR,
\]
Lemma~\ref{lem:algebra} yields that $2g<4$. This is a contradiction since $g\ge 2$.
\end{proof}

\subsection*{Obstruction for quasiregular $\omega_{S_\nu}$-curves $\R^4 \to S_\nu \times \C P^2$}

Proposition~\ref{prop:prod-obstruction-type2} follows from Theorem~\ref{thm:main} and the following elementary observation. Since the proof is similar to the proof of Lemma~\ref{lem:algebra}, we omit the details.

\begin{lemma} \label{lem:algebra-2}
Let $N$ be a smooth manifold and let $\Phi \colon H_\dR^*(N) \to \bigwedge^* \R^n$ be a graded algebra homomorphism. Let $k\ge 2$ and suppose $c\in H_\dR^k(N)$ is a de Rham class satisfying $\Phi c\ne 0$. Let $1\le k'\le k-1$. Suppose $c_1,\ldots,c_m \in H_\dR^{k'}(N)$ and $c_1',\ldots,c_m' \in H_\dR^{k-k'}(N)$ are de Rham classes for which $c_i \wedge c_\ell' = \delta_{i\ell}c$, where $\delta_{i\ell}c$ is the Kronecker delta. Then $m\le \binom{n}{k'}$.
\end{lemma}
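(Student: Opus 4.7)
The plan is to show that the images $\Phi c_1, \ldots, \Phi c_m \in \bigwedge^{k'} \R^n$ are linearly independent, from which the conclusion follows immediately since $\dim \bigwedge^{k'} \R^n = \binom{n}{k'}$. This is the same core strategy that produces the bound $m \le n$ in Lemma~\ref{lem:algebra} and that underlies Lemma~\ref{lem:dim-bound-subalgebra}; the only difference is that here the dual elements live in degree $k-k'$ rather than in complementary degree $n-k'$, and we are aiming for a dimension bound in $\bigwedge^{k'} \R^n$ itself rather than using a wedge pairing into top degree.

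To execute this, I would suppose $\sum_{i=1}^m \lambda_i \Phi c_i = 0$ for some scalars $\lambda_1, \ldots, \lambda_m \in \R$, fix an arbitrary index $j \in \{1, \ldots, m\}$, and wedge the relation on the right with $\Phi c_j'$. Using that $\Phi$ is a graded algebra homomorphism together with the duality assumption $c_i \wedge c_\ell' = \delta_{i\ell} c$, this gives
\[
0 = \left( \sum_{i=1}^m \lambda_i \Phi c_i \right) \wedge \Phi c_j' = \sum_{i=1}^m \lambda_i \Phi(c_i \wedge c_j') = \sum_{i=1}^m \lambda_i \delta_{ij} \Phi c = \lambda_j \Phi c.
\]
Since $\Phi c \ne 0$ by assumption, this forces $\lambda_j = 0$, and as $j$ was arbitrary, the linear independence of $\Phi c_1, \ldots, \Phi c_m$ follows.

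There is no real obstacle: the argument is a one-line pairing computation, entirely analogous to the first half of the proof of Lemma~\ref{lem:algebra}. Unlike Lemma~\ref{lem:algebra}, we do not need the additional step that rules out the borderline case $m = n$, since here the target dimension $\binom{n}{k'}$ is exactly what the linear independence delivers. Consequently the proof can be stated very briefly, as the lemma statement itself suggests when it notes that the argument parallels that of Lemma~\ref{lem:algebra} and the details are omitted.
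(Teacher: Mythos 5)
Your proof is correct and follows the same route the paper indicates (the first half of the argument in Lemma~\ref{lem:algebra}, with $\bigwedge^1\R^n$ replaced by $\bigwedge^{k'}\R^n$): the paper omits the details precisely because this linear-independence pairing argument is the intended one. Wedging the dependence relation against a single $\Phi c_j'$ rather than against $\sum_\ell \lambda_\ell \Phi c_\ell'$ is a cosmetic simplification, and you correctly note that the second step of Lemma~\ref{lem:algebra} (ruling out the borderline case) is not needed here.
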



\begin{proof}[Proof of Proposition~\ref{prop:prod-obstruction-type2}]
Suppose that $F$ is non-constant. By Theorem~\ref{thm:signed}, we have $\int_{\R^n} F^* \omega_{S_\nu} = \infty$. Thus, by Theorem~\ref{thm:main}, there exists a graded algebra homomorphism $\Phi \colon H_\dR^*(S_\nu \times \C P^2) \to \bigwedge^* \R^6$ for which $\Phi [\omega_{S_\nu}]_\dR \ne 0$.

Let $c_1,\ldots,c_{2\nu}$ be basis of $H_\dR^2(S_\nu)$ satisfying $c_i \wedge c_{i+1} = [\vol_{S_\nu}]_\dR$ for $i=1,3,\ldots,2\nu-1$ and $c_i \wedge c_\ell = 0$ for every pair $(i,\ell)\in \{1,\ldots,2\nu\}^2$ with $|2\ell-2i-1|>1$. Denote $c_i' = c_{i+1}$ for $i=1,3,\ldots,2\nu-1$ and $c_i' = c_{i-1}$ for $i=2,4,\ldots,2\nu$. Since $\Phi \pi_1^* [\vol_{S_\nu}]_\dR \ne 0$ and
\[
\pi_1^* c_i \wedge \pi_1^* c_\ell' = \pi_1^* (c_i \wedge c_\ell') = \pi_1^* (\delta_{i\ell} [\vol_{S_\nu}]_\dR) = \delta_{i\ell} \pi_1^* [\vol_{S_\nu}]_\dR,
\]
Lemma~\ref{lem:algebra-2} yields that $2\nu \le \binom{6}{2} = 15$. This is a contradiction since $\nu \ge 8$.
\end{proof}


\bibliographystyle{abbrv}

\end{document}